\newtheorem{lemma}{Lemma}
\newtheorem{theorem}{Theorem}
\newtheorem{remark}{Remark}
\newtheorem{assumption}{Assumption}
\newtheorem{proposition}{Proposition}
\def\BibTeX{{\rm B\kern-.05em{\sc i\kern-.025em b}\kern-.08em
    T\kern-.1667em\lower.7ex\hbox{E}\kern-.125emX}}
\begin{document}
\title{Asynchronous Push-sum Dual Gradient Algorithm in Distributed Model Predictive Control}
\author{Pengbiao Wang, Xuemei Ren, and Dongdong Zheng
\thanks{This work was supported by the National Natural Science Foundation of China under Grant 62273050, and Grant 61973036. (Corresponding author: Xuemei Ren.)}
\thanks{The authors are with School of Automation, Beijing Institute of Technology, Beijing, 100081, China (e-mail: pwang87@cityu.edu.hk; xmren@bit.edu.cn; ddzheng@bit.edu.cn).}}
\maketitle

\begin{abstract}
This paper studies the distributed model predictive control (DMPC) problem for distributed discrete-time linear systems with both local and global constraints over directed communication networks. We establish an optimization problem to formulate the DMPC policy, including the design of terminal ingredients. To cope with the global constraint, we transform the primal optimization problem into its dual problem. Then, we propose a novel asynchronous push-sum dual gradient (APDG) algorithm with an adaptive step-size scheme to solve this dual problem in a fully asynchronous distributed manner. The proposed algorithm does not require synchronous waiting and any form of coordination, which greatly improves solving efficiency. We prove that the APDG algorithm converges at an $\mathit{R}$-linear rate as long as the step-size does not exceed the designed upper bound. Furthermore, we develop a distributed termination criterion to terminate the APDG algorithm when its output solution satisfies the specified suboptimality and the global constraint, thereby avoiding an infinite number of iterations. The recursive feasibility and the stability of the closed-loop system are also established. Finally, a numerical example is provided to clarify and validate our theoretical findings.
\end{abstract}

\begin{IEEEkeywords}
Distributed model predictive control, asynchronous push-sum dual gradient algorithm, $\mathit{R}$-linear convergence rate, adaptive step-size scheme, distributed termination criterion.
\end{IEEEkeywords}
\vspace{-0.4cm}
\section{Introduction}
\IEEEPARstart{M}{odel} predictive control (MPC) is a highly successful control methodology due to its ability to simultaneously optimize performance, handle constraints, and ensure system stability \cite{Qin2003A}. Over the past decade, due to the growing demand for controlling geographically isolated and/or large-scale distributed systems, DMPC has emerged and attracted considerable attention. Its research hotspots include handling local constraints and implementing cooperative/noncooperative control (see \cite{Scattolini2009Architectures,Conte2016distributed,Farina2012Distributed} and references therein).

Another active topic in DMPC is dealing with global constraints. The main challenge is to satisfy these constraints in a distributed manner. To tackle this challenge, the authors of \cite{Richards2007Robust} and \cite{Trodden2013Cooperative} proposed sequential DMPC methods that decompose the overall  problem into several smaller subproblems, where each subsystem solves its subproblem in a predefined sequential order. As an extension of sequential DMPC methods, parallelized DMPC methods \cite{Trodden2014Feasible,Gro2016} allow all subsystems to solve their respective subproblems simultaneously, thereby improving computational efficiency. However, parallelized DMPC methods require each subsystem to communicate directly with all other subsystems, which imposes stringent requirements on communication networks, even for medium-scale systems. Furthermore, the optimality of the overall systems may not be guaranteed, because the aforementioned methods \cite{Richards2007Robust,Trodden2013Cooperative,Trodden2014Feasible,Gro2016} do not explicitly pursue it.

In recent years, iterative DMPC approaches based on the alternating direction method of multipliers or dual decomposition have been developed and reported in \cite{Wang2017,Li2021Distributed ,Doan2011}. By relying on the dual problem of the DMPC problem, these approaches \cite{Wang2017,Li2021Distributed,Doan2011} address the optimality of the overall systems and guarantee the satisfaction of the global constraints; however, their convergence rates are known to be $\mathcal{O}(1/k)$. To improve the convergence rate, Giselsson et al. \cite{Giselsson2013,Giselsson2013A} have employed an accelerated gradient technique in dual decomposition methods to accelerate the convergence of dual variables, while Wang and Ong \cite{Wang2018Ong} have proposed a distributed fast dual gradient algorithm that has been successfully applied in iterative DMPC approaches. However, these algorithms \cite{Giselsson2013,Giselsson2013A,Wang2018Ong} converge at a rate of $\mathcal{O}(1/k^{2})$, which is still a sublinear convergence rate. Moreover, it should be noted that the aforementioned algorithms \cite{Wang2017,Li2021Distributed,Doan2011,Giselsson2013,Giselsson2013A,Wang2018Ong} are only applicable to the DMPC problem with global constraints over undirected communication networks. More recently, other distributed algorithms, such as the push-sum dual gradient algorithm \cite{Jin2021} and the privacy-preserving distributed projected gradient algorithm \cite{Zhao2022}, have been developed for the DMPC problem with global constraints over directed communication networks. Typically, these algorithms \cite{Jin2021,Zhao2022} only converge at a sublinear rate because they use diminishing step-sizes and local gradient information. It  should be noted that all the aforementioned algorithms \cite{Wang2017,Li2021Distributed,Doan2011,Giselsson2013,Giselsson2013A,Wang2018Ong,Jin2021,Zhao2022} achieve only sublinear convergence rates for the DMPC problems with both local and global constraints over undirected/directed graphs. It is well-known that algorithms with linear convergence rates are highly preferable for these problems, as they can greatly reduce the number of iterations and shorten the time in solving optimization problems.

On the other hand, the aforementioned algorithms \cite{Wang2017,Li2021Distributed,Doan2011,Giselsson2013,Giselsson2013A,Wang2018Ong,Jin2021,Zhao2022} are synchronous, as they require a central unit for coordination and clock synchronization. It is noted that these synchronous algorithms \cite{Wang2017,Li2021Distributed,Doan2011,Giselsson2013,Giselsson2013A,Wang2018Ong,Jin2021,Zhao2022}
result in significant idle time for subsystems with rapid update rates, thereby potentially reducing computational efficiency. While the asynchronous algorithm \cite{Zhang2019You} can address this issue, it only converges at a sublinear rate of $\mathcal{O}(\ln(k)/\sqrt{k})$ and is designed for unconstrained optimization problems. To the best of our knowledge, there are no asynchronous distributed algorithms with linear convergence rates
for the DMPC problem with both local and global constraints over directed communication networks; consequently, their development remains a challenging and open research area.

Motivated by these discussions, this paper proposes an APDG algorithm with a linear convergence rate for the DMPC problem with both local and global constraints over directed communication networks. The main contributions are as follows:

(1) A novel APDG algorithm with an adaptive step-size scheme is developed to solve the dual problem of the DMPC problem in a fully asynchronous distributed manner. We prove
that the proposed APDG algorithm converges exactly to the optimal solution at an $R$-linear convergence rate of $\mathcal{O}(\delta^{k})$, provided that the fixed step-size satisfies the specifically designed condition. Compared with existing algorithms \cite{Wang2017,Li2021Distributed,Doan2011,Giselsson2013,Giselsson2013A,Wang2018Ong,Zhang2019You,Nedic2014}, our APDG algorithm is suitable for the DMPC  problem under both local and global constraints over directed communication networks, which is much more challenging. In contrast to existing algorithms \cite{Jin2021,Zhang2019You,Nedic2014} with sublinear convergence rates, our APDG algorithm not only has the advantages of asynchronous updates but, more importantly, converges to the optimal solution at an $R$-linear rate. This implies that it requires fewer iterations and a significantly reduced optimization time.

(2) To further reduce computational requirements, we develop a distributed termination criterion to stop iterations of the APDG algorithm when the global constraint is satisfied and the output solution reaches a specified suboptimality level. Compared with existing termination criteria \cite{Wang2017,Li2021Distributed,Wang2018Ong,Jin2021}, the proposed criterion does not require any form of coordination and has fewer design parameters,
thereby significantly simplifying implementation.

The rest of this paper is organized as follows. Sections \ref{S2} and \ref{S3} introduce the problem formulation and a novel APDG algorithm, respectively. Section \ref{S4} presents the convergence analysis and a distributed termination criterion for the APDG algorithm. Our APDG-based iterative DMPC approach, its recursive feasibility and the stability of the resulting closed-loop system are given in Section \ref{S5}. Section \ref{S6} provides a numerical study. Finally, Section \ref{S7} summarizes our work.

\textbf{Notation:} Let $\mathbb{N}$ be the set of nonnegative integers. For a square matrix $P$, $P>0$ implies that $P$ is a positive definite matrix. For two nonnegative integers $M<L$, we define $\mathbb{Z}^{L}_{M}\triangleq\{M,M+1,\cdots,L\}$. For two vectors $x$ and $y$, $x\preceq y$ (or $x\prec y$) implies that all the entries of $x$ are less than or equal to (or less than) all the entries of $y$. For a set $\mathcal{S}$, we write $|\mathcal{S}|$ for its cardinality. For a matrix $P$, $p_{ij}$ and $\|P\|_{2}$ represent its $(i,j)$-th entry and induced 2-norm, respectively. The superscript `$\top$' denotes the transpose of a matrix or vector. For a real symmetric matrix $A$, let $\lambda_{\min}(A)$ and $\lambda_{\max}(A)$ be its minimum and maximum eigenvalues, respectively. We write $\text{col}(x^{1},\cdots,x^{n})=[(x^{1})^{\top},\cdots,(x^{n})^{\top}]^{\top}$.
For a vector $x$, the symbols $\|x\|_{2}$, $\|x\|_{1}$ and $\|x\|_{P}\triangleq\sqrt{x^{\top}Px}$ with $P>0$ denote its Euclidean-, $\ell_{1}$- and $P$-norm, respectively. The symbol $\otimes$ refers to the Kronecker product. The projection of a vector
$\nu$ on the set $\mathcal{X}$ is denoted by $\text{proj}_{\mathcal{X}}(\nu)=\arg\min\limits_{x\in\mathcal{X}}\|x-\nu\|_{2}$. $\mathbf{1}_{n}$ and $\mathbf{0}_{n}$ are the $n$-dimensional column vectors with all entries being 1 and 0, respectively. The subscript will be omitted when the dimension is clear from the context.

\section{Problem Formulation}\label{S2}
\subsection{System and Constraints}
Consider a distributed system consisting of $M$ discrete-time linear subsystems, each of which is described by
\begin{align}\label{system model}
x^{i}(t+1)=A^{i}x^{i}(t)+B^{i}u^{i}(t), \ i\in \mathbb{Z}^{M}_{1},
\end{align}
where $x^{i}(t)\in\mathbb{R}^{n_{i}}$ and $u^{i}(t)\in\mathbb{R}^{m_{i}}$ represent the state and input at time $t$, respectively. $A^{i}\in\mathbb{R}^{n_{i}\times n_{i}}$ and $B^{i}\in\mathbb{R}^{n_{i}\times m_{i}}$ are the state and input matrices, respectively. Subsystem $i$ is subject to the following local constraints:
\begin{align}\label{local constraint}
x^{i}(t)\in\mathcal{X}^{i},\ u^{i}(t)\in\mathcal{U}^{i},
\end{align}
where both $\mathcal{X}^{i}$ and $\mathcal{U}^{i}$ are polytopes that include the origin in their interior. Moreover, we impose the following global constraint on all subsystems:
\begin{align}\label{global constraint}
\sum\limits_{i=1}^{M}\big(\mathcal{C}^{i}x^{i}(t)+\mathcal{D}^{i}u^{i}(t)\big)\preceq \mathbf{1}_{\rho},
\end{align}
where $\mathcal{C}^{i}\!\in\!\mathbb{R}^{\rho\times n_{i}}$ and $\mathcal{D}^{i}\!\in\! \mathbb{R}^{\rho\times m_{i}}$ are known only to subsystem $i$.

The control objective is to develop an iterative DMPC approach that stabilizes the state of each subsystem at the equilibrium point. To this end, the following standard assumption is made.

\begin{assumption}\!\!\label{assumption 1}\cite{Wang2017,Li2021Distributed}
For any subsystem $i$, the state $x^{i}(t)$ is measurable and the matrix pair $(A^{i},B^{i})$ is stabilizable.
\end{assumption}

\subsection{Network Topology: A Digraph}
We use the digraph $\mathcal{G}=(\mathcal{V},\mathcal{E})$ to represent the network topology among $M$ subsystems, in which $\mathcal{V}\triangleq\{1,2,\cdots,M\}$ is the vertex (or subsystem) set, and $\mathcal{E}\subseteq\mathcal{V}\times\mathcal{V}$ is the edge set. Specifically, the edge set denotes unidirectional communication links, that is, subsystem $i$ can send information to subsystem $j$ if $(i\rightarrow j)\in\mathcal{E}$. For subsystem $i$, we define its sets of in-neighbors and out-neighbors as $N^{i}_{\text{in}}\triangleq\{j|(j\rightarrow i)\in\mathcal{E}\}\cup\{i\}$ and $N^{i}_{\text{out}}\triangleq\{j|(i\rightarrow j)\in\mathcal{E}\}\cup\{i\}$, respectively. $|N^{i}_{\text{out}}|$ is the out-degree of subsystem $i$. The weight matrix of the digraph $\mathcal{G}$ is denoted by $\mathcal{A}=[a_{ij}]_{i,j=1}^{M}$, where $a_{ij}=1/|N^{j}_{\text{out}}|$ if $j\in N^{i}_{\text{in}}$, and $a_{ij}=0$ otherwise.
It is noted that the weight matrix $\mathcal{A}$ is column-stochastic, i.e., $\mathbf{1}^{\top}\mathcal{A}=\mathbf{1}^{\top}$ (or equivalently $\sum_{i=1}^{M}a_{ij}=1,\ \forall j\in\mathcal{V}$) \cite{Nedic2014}.

\begin{assumption}\label{assumption 2} (1) The digraph $\mathcal{G}$ is fixed and strongly connected.
  (2) There exists a sufficiently small constant $\bar{a}>0$ such that $a_{ij}\geq\bar{a}$ for any $j\in N^{i}_{\text{in}}$.
\end{assumption}
\begin{assumption}\label{assumption 3}
The communication delay is time-varying but uniformly upper-bounded, i.e., for any $(j\rightarrow i)\in\mathcal{E}$, the communication delay $\tau_{ij}$ satisfies $0\leq\tau_{ij}\leq\tau$, where $\tau>0$.
\end{assumption}

Assumption \ref{assumption 2} is common in distributed optimization over digraphs \cite{Zhang2019You}. Moreover, Assumption \ref{assumption 3} is quite reasonable \cite{Zhang2019You} and serves as a prerequisite for Lemma \ref{lemma 1} in Section \ref{S3.2}.

\subsection{Optimization Problem}
In this subsection, we formulate the DMPC optimization problem. For subsystem $i$, define its cost function as
\begin{align}\label{cost}
 J^{i}(x^{i},\boldsymbol{u}^{i})\triangleq&\sum\limits_{\ell=0}^{N-1}\left(\|x^{i}_{\ell}\|_{Q^{i}}^{2}+\|u^{i}_{\ell}\|_{R^{i}}^{2}\right)+
\|x^{i}_{N}\|_{P^{i}}^{2},
\end{align}
where $N>0$ is the prediction horizon, $x^{i}$ is the state of subsystem $i$ at time $t$, $\boldsymbol{u}^{i}\triangleq\text{col}(u^{i}_{0},u^{i}_{1},\cdots,u^{i}_{N-1})$ is the collection of $N$ predictive inputs at time $t$, and $x^{i}_{\ell}$ is the predictive state evolving along the predictive dynamics $x^{i}_{\ell+1}=A^{i}x^{i}_{\ell}+B^{i}u^{i}_{\ell}$ with $x^{i}_{0}=x^{i}$. For given weight matrices $Q^{i}>0$ and $R^{i}>0$, we obtain the weight matrix $P^{i}>0$ by solving the discrete-time algebraic Riccati equation (ARE) $P^{i}\!=\!(A^{i})^{\top}P^{i}A^{i}\!-\!(A^{i})^{\top}P^{i}B^{i}(R^{i}+(B^{i})^{\top}P^{i}B^{i})^{\!-\!1}(B^{i})^{\top}P^{i}A^{i}\!+Q^{i}$ \cite{Rawlings2009}. At time $t$, we define the collection of $N+1$ predictive states as $\boldsymbol{x}^{i}\triangleq\text{col}(x_{0}^{i},x_{1}^{i},\cdots,x_{N}^{i})$. And it can be expressed as
\begin{align*}
\boldsymbol{x}^{i}=\bar{A}^{i}x^{i}+\bar{B}^{i}\boldsymbol{u}^{i},
\end{align*}
where \begin{align*}
        \bar{A}^{i}=\begin{bmatrix}
          I \\
          A^{i} \\
          (A^{i})^2 \\
          \vdots \\
          (A^{i})^{N}
        \end{bmatrix} \ \text{and} \ \bar{B}^{i}=\begin{bmatrix}
                                       0 & 0 & \cdots & 0 \\
                                       B^{i} & 0 & \cdots & 0 \\
                                       A^{i}B^{i} & B^{i} & \cdots & 0 \\
                                       \vdots & \vdots & \ddots & \vdots \\
                                       (A^{i})^{N-1}B^{i} & (A^{i})^{N-2}B^{i} & \cdots & B^{i}
                                     \end{bmatrix}.
      \end{align*}
Then, the cost function \eqref{cost} can be rewritten as
\begin{align*}
J^{i}(x^{i},\boldsymbol{u}^{i})=&(\boldsymbol{x}^{i})^{\top}\underbrace{\begin{bmatrix}
                                                             Q^{i} & &    &  \\
                                                              &   \ddots &  &  \\
                                                              &    & Q^{i} &  \\
                                                              &    &  & P^{i}
                                                           \end{bmatrix}}_{\mathcal{Q}^{i}}\boldsymbol{x}^{i}\\
                   &+(\boldsymbol{u}^{i})^{\top}\underbrace{\begin{bmatrix}
                                                              R^{i} &  &  &  \\
                                                               & R^{i} &  &  \\
                                                               &  & \ddots &  \\
                                                               &  &  & R^{i}
                                                            \end{bmatrix}}_{\mathcal{R}^{i}}\boldsymbol{u}^{i}\\
=&(\boldsymbol{x}^{i})^{\top}\mathcal{Q}^{i}\boldsymbol{x}^{i}+(\boldsymbol{u}^{i})^{\top}\mathcal{R}^{i}\boldsymbol{u}^{i} \\
=&(\bar{A}^{i}x^{i}+\bar{B}^{i}\boldsymbol{u}^{i})^{\top}\mathcal{Q}^{i}(\bar{A}^{i}x^{i}+\bar{B}^{i}\boldsymbol{u}^{i})+(\boldsymbol{u}^{i})^{\top}\mathcal{R}^{i}\boldsymbol{u}^{i}\\
=&(\boldsymbol{u}^{i})^{\top}\mathcal{W}^{i}\boldsymbol{u}^{i}+2(\boldsymbol{u}^{i})^{\top}\mathcal{F}^{i}x^{i}+(x^{i})^{\top}\mathcal{H}^{i}x^{i},
\end{align*}
where
\begin{align*}
\mathcal{Q}^{i}=\text{diag}(\overbrace{Q^{i},\cdots,Q^{i}}^{N \text{ blocks}}, P^{i}), \ \mathcal{R}^{i}=\text{diag}(\overbrace{R^{i},\cdots,R^{i}}^{N \text{ blocks}}),
\end{align*}
$\mathcal{W}^{i}=(\bar{B}^{i})^{\top}\mathcal{Q}^{i}\bar{B}^{i}+\mathcal{R}^{i}$, $\mathcal{F}^{i}=(\bar{B}^{i})^{\top}\mathcal{Q}^{i}\bar{A}^{i}$ and $\mathcal{H}^{i}=(\bar{A}^{i})^{\top}\mathcal{Q}^{i}\bar{A}^{i}$. It should be noted that the cost function $J^{i}(x^{i},\boldsymbol{u}^{i})$, being quadratic in $\boldsymbol{u}^{i}$, is differentiable, and its second derivative (Hessian matrix) is $2\mathcal{W}^{i}$. It follows from \cite{Horn1990} that the cost function $J^{i}(x^{i},\boldsymbol{u}^{i})$ is $\mu^{i}$-strongly convex with respect to $\boldsymbol{u}^{i}$ and has an $l^{i}$-Lipschitz continuous gradient, where $\mu^{i}=\lambda_{\min}(2\mathcal{W}^{i})$, $l^{i}=\lambda_{\max}(2\mathcal{W}^{i})$ and $\mu^{i}<l^{i}$.

To formulate the subsequent DMPC optimization problem, we define the maximal positively invariant set as follows:
\begin{align}
\mathcal{X}^{i}_{\text{MPI}}\triangleq\{x^{i}_{\ell}: x^{i}_{\ell}\in\mathcal{X}^{i},K^{i}x^{i}_{\ell}\in\mathcal{U}^{i},x^{i}_{\ell+1}=A^{i}_{K}x^{i}_{\ell},\forall \ell\in\mathbb{Z}_{0}^{N-1}\},
\end{align}
where $A^{i}_{K}\!=\! A^{i}\!+\!B^{i}K^{i}$, and $K^{i}=-(R^{i}+(B^{i})^{\top}P^{i}B^{i})^{-1}(B^{i})^{\top}$\\$P^{i}A^{i}$ is the optimal control gain obtained from the linear quadratic regulator. Then, we define the terminal set $\mathcal{X}^{i}_{f}$ satisfying the following properties:
\begin{align}\label{terminal set}
\mathcal{X}^{i}_{f}\subset\mathcal{X}^{i}_{\text{MPI}}, A^{i}_{K}x^{i}_{\ell}\in\mathcal{X}^{i}_{f},\mathcal{A}^{i}_{K}x^{i}_{\ell}\preceq\sigma\mathbf{1}_{\rho} \ \text{for} \ \forall x^{i}_{\ell}\in\mathcal{X}^{i}_{f},
\end{align}
where $\mathcal{A}^{i}_{K}=\mathcal{C}^{i}+\mathcal{D}^{i}K^{i}$, $\sigma=\frac{1}{M}-(N+1)\gamma$, and $\gamma$ is an appropriate constant satisfying $0<\gamma<\frac{1}{M(N+1)}$. By utilizing Algorithm 3.1 in \cite{Gilbert1991} and the Multi-Parametric Toolbox, the sets $\mathcal{X}^{i}_{\text{MPI}}$ and   $\mathcal{X}^{i}_{f}$ can be easily obtained. Note that if the predictive states of all subsystems enter their respective terminal sets, the global constraint can be naturally satisfied under $u^{i}_{\ell}=K^{i}x^{i}_{\ell}$, which can be equivalently written as
$\sum_{i=1}^{M}\left(\mathcal{C}^{i}+\mathcal{D}^{i}K^{i}\right)x^{i}_{\ell}\preceq \sigma M\mathbf{1}_{\rho}\prec\mathbf{1}_{\rho}$
for all $x^{i}_{\ell}\in\mathcal{X}^{i}_{f}$, $\ell\in\mathbb{Z}_{0}^{N-1}$. To proceed, we define the following local constraint set:
\begin{align}\label{compact set}
\nonumber\mathcal{U}^{i}_{T}(x^{i})\triangleq\Big\{\boldsymbol{u}^{i}: \ &x^{i}_{\ell}\in\mathcal{X}^{i},u^{i}_{\ell}\in\mathcal{U}^{i},x^{i}_{N}\in\mathcal{X}^{i}_{f},x^{i}_{\ell+1}=A^{i}x^{i}_{\ell}\\
&+B^{i}u^{i}_{\ell},x^{i}_{0}=x^{i},\forall \ell\in\mathbb{Z}_{0}^{N-1}\Big\}.
\end{align}

To ensure the satisfaction of the global constraint \eqref{global constraint} under the early termination of the subsequent algorithm, we formulate the following DMPC optimization problem by tightening the global constraint \eqref{global constraint} and combining \eqref{cost} and \eqref{compact set},
\begin{subequations}\label{optimization problem1}
\begin{align}
\mathcal{P}(x):&\min\limits_{\boldsymbol{u}^{i}}\sum\limits_{i=1}^{M}J^{i}(x^{i},\boldsymbol{u}^{i}),\\
\text{s.t.} \quad& \boldsymbol{u}^{i}\in\mathcal{U}^{i}_{T}(x^{i}), \forall  i\in\mathcal{V},\label{constraint 0}\\
\quad\quad&\sum\limits_{i=1}^{M}\mathfrak{g}^{i}(x^{i},\boldsymbol{u}^{i})\preceq b(\epsilon),\label{tightening constraint}
\end{align}
\end{subequations}
where $x=\text{col}(x^{1},x^{2},\cdots,x^{M})$,  $\mathfrak{g}^{i}(x^{i},\boldsymbol{u}^{i})\triangleq\text{col}(\mathcal{C}^{i}x^{i}_{0}+\mathcal{D}^{i}u^{i}_{0},\cdots,
\mathcal{C}^{i}x^{i}_{N-1}+\mathcal{D}^{i}u^{i}_{N-1})$,  $b(\epsilon)\triangleq\text{col}((1-M\epsilon)\mathbf{1}_{\rho},\cdots,(1-MN\epsilon)\mathbf{1}_{\rho})$, and $0<\epsilon\leq\gamma$. Here, $\mathfrak{g}^{i}(x^{i},\boldsymbol{u}^{i})$ can also be written in a more compact form as follows

\begin{align*}
\mathfrak{g}^{i}(x^{i},\boldsymbol{u}^{i})\triangleq F^{i}x^{i}+H^{i}\boldsymbol{u}^{i},
\end{align*}
where $F^{i}\in\mathbb{R}^{N\rho\times n_{i}}$ and $H^{i}\in\mathbb{R}^{N\rho\times Nm_{i}}$ are given by
\begin{align*}
F^{i}&=\begin{bmatrix}
 \mathcal{C}^{i} \\
  \mathcal{C}^{i}A^{i} \\
  \mathcal{C}^{i}(A^{i})^{2} \\
  \vdots \\
  \mathcal{C}^{i}(A^{i})^{N-1}
\end{bmatrix},\\
H^{i}=&\begin{bmatrix}
         \mathcal{D}^{i} & 0 & 0 & \cdots & 0 \\
         \mathcal{C}^{i}B^{i} & \mathcal{D}^{i} & 0 & \cdots & 0 \\
         \mathcal{C}^{i}A^{i}B^{i} & \mathcal{C}^{i}B^{i} & \mathcal{D}^{i} & \cdots & 0 \\
         \vdots & \vdots & \vdots & \ddots & \vdots \\
         \mathcal{C}^{i}(A^{i})^{N-2}B^{i} & \mathcal{C}^{i}(A^{i})^{N-3}B^{i} & \mathcal{C}^{i}(A^{i})^{N-4}B^{i} & \cdots & \mathcal{D}^{i}
       \end{bmatrix}.
\end{align*}

Now we define a feasible region for the problem $\mathcal{P}(x)$
\vspace{-0.1cm}
\begin{align}
\mathcal{D}(x)\triangleq\{x: \mathcal{P}(x) \  \text{is feasible}\}.
\end{align}
We assume that the initial state is in the feasible region.

Since the global constraint \eqref{tightening constraint} exists, the existing Optimization Toolbox cannot be directly used to solve the problem $\mathcal{P}(x)$ in a distributed manner. Inspired by the Lagrangian method, in the next subsection, we will transform the problem $\mathcal{P}(x)$ into its dual problem.

\subsection{Dual Problem}\label{S2D}
The dual function of the problem $\mathcal{P}(x)$ is given by
\begin{align*}
d(\lambda)&=\inf\limits_{\boldsymbol{u}^{i}\in\mathcal{U}^{i}_{T}(x^{i})}\mathcal{L}(\boldsymbol{u}^{i},\lambda)\\
&=\sum_{i=1}^{M}\left(\lambda^{\top}\left(F^{i}x^{i}-\frac{b(\epsilon)}{M}\right)-J_{\bot}^{i}(-(H^{i})^{\top}\lambda)\right),
\end{align*}
where $\lambda\in\mathbb{R}^{N\rho}$ is the dual variable satisfying $\lambda\succeq\mathbf{0}$, $\mathcal{L}(\boldsymbol{u}^{i},\lambda)
=\sum_{i=1}^{M}J^{i}(x^{i},\boldsymbol{u}^{i})+\lambda^{\top}(\sum_{i=1}^{M}\mathfrak{g}^{i}(x^{i},\boldsymbol{u}^{i})- b(\epsilon))$ is the Lagrangian function, and $J_{\bot}^{i}(-(H^{i})^{\top}\lambda)=\sup\nolimits_{\boldsymbol{u}^{i}\in\mathcal{U}^{i}_{T}(x^{i})}\{-J^{i}(x^{i},\boldsymbol{u}^{i})-((H^{i})^{\top}\lambda)^{\top}\boldsymbol{u}^{i}\}$ is the conjugate function of $J^{i}(x^{i},\boldsymbol{u}^{i})$. Then, the dual problem of $\mathcal{P}(x)$ is given by
\begin{align}\label{dual problem}
\mathcal{DP}(x): \ \min\limits_{\lambda\succeq\mathbf{0}}\sup\limits_{\boldsymbol{u}^{i}\in\mathcal{U}^{i}_{T}(x^{i})}-\mathcal{L}(\boldsymbol{u}^{i},\lambda)\triangleq\min\limits_{\lambda\succeq\mathbf{0}}\sum\limits_{i=1}^{M}f^{i}(\lambda),
\end{align}
where $f^{i}(\lambda)=\lambda^{\top}(-F^{i}x^{i}+\frac{b(\epsilon)}{M})+J_{\bot}^{i}(-(H^{i})^{\top}\lambda)$. Since $J^{i}(x^{i},\boldsymbol{u}^{i})$ is $\mu^{i}$-strongly convex and has an $l^{i}$-Lipschitz continuous gradient, its conjugate function $J_{\bot}^{i}(-(H^{i})^{\top}\lambda)$ is $\frac{1}{l^{i}}$-strongly convex and has a $\frac{1}{\mu^{i}}$-Lipschitz continuous gradient \cite{Li2018Accelerated}. Moreover, since the supremum in the definition of $J_{\bot}^{i}(-(H^{i})^{\top}\lambda)$ is attainable and by Danskin's Theorem \cite{Bertsekas1999}, the gradient of $f^{i}(\lambda)$ is denoted by $\nabla f^{i}(\lambda)=-F^{i}x^{i}+\frac{b(\epsilon)}{M}-H^{i}\arg\max\nolimits_{\boldsymbol{u}^{i}\in\mathcal{U}^{i}_{T}(x^{i})}\{-J^{i}(x^{i},\boldsymbol{u}^{i})-((H^{i})^{\top}\lambda)^{\top}\boldsymbol{u}^{i}\}$. It is noted that $f^{i}(\lambda)$ is $\vartheta^{i}$-strongly convex and $\nabla f^{i}(\lambda)$ is $L^{i}$-Lipschitz continuous, where $\vartheta^{i}=\frac{\lambda_{\min}(H^{i}(H^{i})^{\top})}{l^{i}}$ and $L^{i}=\frac{\lambda_{\max}(H^{i}(H^{i})^{\top})}{\mu^{i}}=\frac{\|H^{i}\|^{2}_{2}}{\mu^{i}}$ \cite{Nesterov2005}. To simplify notation, we let $\vartheta=\min\limits_{1\leq i\leq M}\Big\{\frac{\lambda_{\min}(H^{i}(H^{i})^{\top})}{l^{i}}\Big\}$ and $L=\max\limits_{1\leq i\leq M}\Big\{\frac{\|H^{i}\|^{2}_{2}}{\mu^{i}}\Big\}$. In this paper, we will develop a distributed algorithm to find the saddle point $(\boldsymbol{u}^{i}_{\star},\lambda_{\star})$ of the dual problem $\mathcal{DP}(x)$.

\section{APDG Algorithm}\label{S3}
\subsection{APDG Algorithm Design}\label{S3.1}
In this subsection, we propose an asynchronous algorithm called APDG, which is summarized in Algorithm \ref{Algorithm1}.

\begin{algorithm}[t]
    \caption{The APDG Algorithm}\label{Algorithm1}
    \renewcommand{\algorithmicrequire}{\textbf{Initialize:}}
    \begin{algorithmic}[1]
        \REQUIRE For subsystem $i\in\mathcal{V}$, set the initial values $\lambda^{i}_{0}\succeq\mathbf{0}$, $z^{i}_{0}=\mathbf{0}$, $y^{i}_{0}=1$, $d^{i}_{0}=\nabla f^{i}(\lambda^{i}_{0})=\mathbf{0}$, $l^{i}_{0}=0$, $s^{i}_{0}=0$ and the local iteration index $k_{i}=0$; create local buffers $\bar{\mathcal{Z}}^{i}$, $\bar{\mathcal{Y}}^{i}$, $\bar{\mathcal{S}}^{i}$, $\bar{\mathcal{L}}^{i}$ and $\bar{\mathcal{D}}^{i}$; and send $z^{i}_{0}$,
$d^{i}_{0}$, $y^{i}_{0}$, $s^{i}_{0}$ and $l^{i}_{0}$ to its all out-neighbors $N^{i}_{\text{out}}$.\\
            \FOR {each subsystem $i$ at iteration $k_{i}$}
            \STATE Keep receiving $z^{j}_{k_{j}}$, $y^{j}_{k_{j}}$, $d^{j}_{k_{j}}$, $s^{j}_{k_{j}}$ and $l^{j}_{k_{j}}$ from its in-neighbors, $j\in N^{i}_{\text{in}}$. Then, these data are stored separately in the buffers $\bar{\mathcal{Z}}^{i}$, $\bar{\mathcal{Y}}^{i}$, $\bar{\mathcal{D}}^{i}$, $\bar{\mathcal{S}}^{i}$ and $\bar{\mathcal{L}}^{i}$.
            \STATE Read data from these buffers. When $l^{j}_{k_{j}}=0$, clear data about its neighbor $j$ in the buffers. Then update
            \begin{align}
              &w^{i}_{k_{i}+1}=\sum\nolimits_{z^{j}_{k_{j}}\in\bar{\mathcal{Z}}^{i}}a_{ij}z^{j}_{k_{j}}, \label{AA0}\\
              &y^{i}_{k_{i}+1}=\sum\nolimits_{y^{j}_{k_{j}}\in\bar{\mathcal{Y}}^{i}}a_{ij}y^{j}_{k_{j}}, \label{AA1}\\
              &\lambda^{i}_{k_{i}+1}=\frac{[w^{i}_{k_{i}+1}]_{+}}{y^{i}_{k_{i}+1}},
              \label{AA2}\\
              &\boldsymbol{u}^{i}_{k_{i}+1}={\underset{\boldsymbol{u}^{i}\in\mathcal{U}^{i}_{T}(x^{i})}{{\arg\max}}}\Big\{\!-\!J^{i}(x^{i},\boldsymbol{u}^{i})\!-\!((H^{i})^{\top}\lambda^{i}_{k_{i}+1})^{\top}\boldsymbol{u}^{i}\Big\},
              \label{AA3}\\
              &\alpha^{i}_{k_{i}+1}\!=\!\sum\nolimits_{s=s^{i}_{k_{i}}}^{\tilde{s}^{i}_{k_{i}+1}}\beta \ \text{with} \ \tilde{s}^{i}_{k_{i}+1}\!=\!\max\nolimits_{s^{j}_{k_{j}}\!\in\bar{\mathcal{S}}^{i}}\left\{s^{j}_{k_{j}}\right\},
              \label{AA5}\\
              &z^{i}_{k_{i}+1}=w^{i}_{k_{i}+1}-\alpha^{i}_{k_{i}+1}d^{i}_{k_{i}},
              \label{AA6}\\
              &d^{i}_{k_{i}+1}\!=\!\sum\nolimits_{d^{j}_{k_{j}}\!\in\bar{\mathcal{D}}^{i}}a_{ij}d^{j}_{k_{j}}\!+\!\nabla f^{i}(\lambda^{i}_{k_{i}+1})\!-\!\nabla f^{i}(\lambda^{i}_{k_{i}}).
              \label{AA7}
            \end{align}
            \STATE Let $k_{i}=k_{i}+1$ and $s^{i}_{k_{i}}=k_{i}$.
            \STATE Send $z^{i}_{k_{i}}$, $y^{i}_{k_{i}}$, $d^{i}_{k_{i}}$ and $s^{i}_{k_{i}}$ to its all out-neighbors $N^{i}_{\text{out}}$.
            \IF {the termination conditions \eqref{lemma 8 eq1} and \eqref{lemma 8 eq2} hold}
            \STATE Let $l^{i}_{k_{i}}=1$, and send $l^{i}_{k_{i}}$ to its all out-neighbors $N^{i}_{\text{out}}$.
            \STATE Output $\lambda^{i}_{k_{i}}$ and $\boldsymbol{u}^{i}_{k_{i}}$.
            \STATE \textbf{break}
            \ELSE
            \STATE Let $l^{i}_{k_{i}}=0$, and send $l^{i}_{k_{i}}$ to its all out-neighbors $N^{i}_{\text{out}}$.
            \STATE Go back to step 2.
            \ENDIF
            \ENDFOR
    \end{algorithmic}
\end{algorithm}

We now introduce the notation for our APDG algorithm. In the equations \eqref{AA0}-\eqref{AA2}, $w^{i}_{k_{i}}\in\mathbb{R}^{N\rho}$, $z^{i}_{k_{i}}\in\mathbb{R}^{N\rho}$ and $y^{i}_{k_{i}}\in\mathbb{R}$ are auxiliary variables; $\lambda^{i}_{k_{i}}$ is the local estimate of the dual variable $\lambda$ generated by subsystem $i$ at iteration $k_{i}$; and the symbol $[\bullet]_{+}$ is the projection operator, and for a given vector $v\in\mathbb{R}^{n}$, $[v]_{+}:$ $\mathbb{R}^{n}\rightarrow \mathbb{R}^{n}$, is defined by $[v]_{+}=\text{col}(\max\{0,v^{1}\},\cdots,\max\{0,v^{n}\})$. In \eqref{AA5}, $s^{i}_{k_{i}}$ is the current iteration number of subsystem $i$; $\beta$ is the fixed step-size to be designed; and $\alpha^{i}_{k_{i}}$ is the adaptive step-size at iteration $k_{i}$. In \eqref{AA6} and \eqref{AA7}, we write $\nabla f^{i}(\lambda^{i}_{k_{i}})$ for the gradient at iteration $k_{i}$, which is given by $\nabla f^{i}(\lambda^{i}_{k_{i}})=-F^{i}x^{i}+\frac{b(\epsilon)}{M}-H^{i}\boldsymbol{u}^{i}_{k_{i}}=-\big(\mathfrak{g}^{i}(x^{i},\boldsymbol{u}^{i}_{k_{i}})-\frac{b(\epsilon)}{M}\big)$; and $d^{i}_{k_{i}}$ is a proxy of the weighted average gradient.

In what follows, we provide some explanations for our APDG algorithm. First, the equations \eqref{AA0}-\eqref{AA2} are the push-sum steps to obtain the local estimate $\lambda^{i}_{k_{i}}$ of the dual variable $\lambda$. The projection operator $[\bullet]_{+}$ is used in \eqref{AA2} to ensure the local estimate $\lambda^{i}_{k_{i}}\succeq\mathbf{0}$. Then, the DMPC law can be obtained by solving \eqref{AA3}. The adaptive step-size scheme, designed in \eqref{AA5}, is utilized to reduce the convergence speed difference caused by different iterations among neighbors. The equation \eqref{AA6} is similar to the gradient descent technique; however, it differs in that we use a fixed step-size in our adaptive step-size scheme and a proxy of the weighted average gradient. Finally, in asynchronous updates, we develop the gradient tracking technique in \eqref{AA7} to update \eqref{AA6}. It is noted that \eqref{AA7} utilizes historical gradient information to track the weighted average gradient, thereby accelerating the convergence of our APDG algorithm.

Compared with the synchronous push-sum dual gradient (PSDG) algorithm \cite{Jin2021}, our APDG algorithm has the following advantages: (i) It updates and communicates asynchronously, without any synchronous waiting. Once any subsystem finishes its current updates, it can immediately start the subsequent ones, thereby avoiding idle waiting time and significantly improving the solving efficiency of the optimization problem. (ii) Our APDG algorithm does not require a central unit for clock synchronization, which significantly simplifies implementation. (iii) Most importantly, our APDG algorithm has a superior convergence rate, as stated in Theorem \ref{theorem 1}.

\begin{remark}
Our adaptive step-size scheme is inspired by \cite{Zhang2019You}, but it has several key differences. The scheme in \cite{Zhang2019You} adopts diminishing step-sizes. Specifically, diminishing step-sizes satisfy $\sum_{k_{i}=0}^{\infty}\beta^{i}_{k_{i}}=\infty$ and $\sum_{k_{i}=0}^{\infty}(\beta^{i}_{k_{i}})^{2}<\infty$ (see \cite[Assumption 1]{Zhang2019You}), which ensures the convergence of the algorithm. In contrast, our adaptive step-size scheme utilizes a fixed step-size $\beta$, as long as it satisfies the specifically designed condition (see Theorem \ref{theorem 1}). Furthermore, the adaptive step-size scheme in \cite{Zhang2019You} requires that the sum of the step-sizes of each subsystem be equal (as detailed in \cite[Section III]{Zhang2019You}), whereas our scheme does not impose this strict condition.
\end{remark}

\subsection{Reformulation of The APDG Algorithm}\label{S3.2}
In this subsection, we reformulate the updates of the APDG algorithm into an augmented system.

Define a global index as $k$. Whenever at least one local index $k_{i}$ is incremented by 1, the global index $k$ is automatically incremented by 1. (see Fig. \ref{fig1} for an intuitive representation). We write $t(k)$ for the time at the global index $k$. We let $\mathcal{T}\triangleq\{t(k)\}_{k\in\mathbb{N}}$ be the set of activation times of all subsystems. $\mathcal{T}^{i}$ is the set of activation times of subsystem $i$, $i\in\mathcal{V}$. It is evident that $\mathcal{T}^{i}\subseteq\mathcal{T}$ and $\mathcal{T}=\mathcal{T}^{1}\bigcup\mathcal{T}^{2}\bigcup\cdots\bigcup\mathcal{T}^{M}$. If $t(k)\in\mathcal{T}^{i}$, subsystem $i$ is activated at time $t(k)$. At time $t(k)$, we define $\mathcal{E}(t(k))$ as the edge set formed by the activated subsystems and their out-neighbors, and we denote it as $\mathcal{E}(k)$ for brevity. At time $t(k)$, the digraph is represented as $\mathcal{G}(t(k))=(\mathcal{V},\mathcal{E}(k))$, and we write it as $\mathcal{G}(k)$ for brevity. If $t(k)\in\mathcal{T}^{i}$ and $(i\rightarrow j)\in\mathcal{E}$, then $(i\rightarrow j)\in\mathcal{E}(k)$.

\begin{figure}[!ht]
\centerline{\includegraphics[width=8cm]{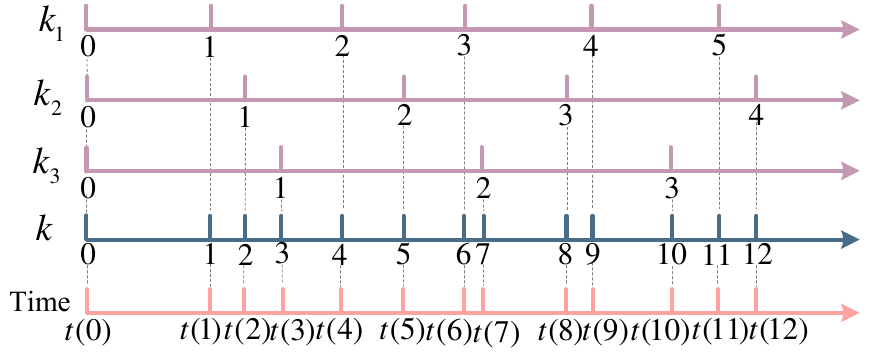}}
\caption{Iteration counts and activation times of three subsystems in the APDG algorithm.}
\label{fig1}
\end{figure}

To continue, we introduce the following assumption and lemma.
\begin{assumption}\!\!\!\cite{Zhang2019You}\label{assumption 5}
For any two consecutive activation times $t_{i}$ and $t_{i}^{+}$ of subsystem $i$, ($t_{i}$, $t_{i}^{+}\in\mathcal{T}^{i}$, $i\in\mathcal{V}$), there always exist two positive constants $\underline{\tau}$ and $\bar{\tau}$ such that $\underline{\tau}\leq|t_{i}-t_{i}^{+}|\leq\bar{\tau}$.
\end{assumption}

\begin{lemma}\!\!\!\cite{Zhang2019You}\label{lemma 1}
Under Assumptions \ref{assumption 3} and \ref{assumption 5}, the following statements hold.

(1) Each subsystem is activated at least once during $(t(k), t(k+\eta_{1}^{})]$, where $\eta_{1}^{}=(M-1)\lfloor\bar{\tau}/\underline{\tau}\rfloor+1$. Moreover, for any $k$, the union of graphs $\bigcup_{t=k}^{k+\eta_{1}}\mathcal{G}(t)$ is strongly connected.

(2) Let $\eta_{2}^{}=M\lfloor\tau/\underline{\tau}\rfloor$ and $\eta=\eta_{1}^{}+\eta_{2}^{}$. If $t(k)\in\mathcal{T}^{i}$ and $(i\rightarrow j)\in\mathcal{E}$, then at time $t(k)$, data sent by subsystem $i$ can be received by subsystem $j$ before time $t(k+\eta_{2}^{})$, and before time $t(k+\eta)$, subsystem $j$ will use these data to complete one update.

(3) For any $i,j\in\mathcal{V}$ and $k$, we have $|s^{i}(k)\!-\!s^{j}(k)|\!\leq\! M\eta$, where $s^{i}(k)$ and $s^{j}(k)$ are the values of $s^{i}_{k_{i}}$ and $s^{j}_{k_{j}}$ at time $t(k)$.
\end{lemma}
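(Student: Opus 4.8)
The plan is to prove the three statements in order, since (2) relies on (1) and (3) on both, and all three reduce to a single counting principle: how many global increments can accumulate inside a fixed window of physical time. For statement (1), I would fix a subsystem $i$ and exploit Assumption~\ref{assumption 5}, under which the gap between two consecutive activations of $i$ never exceeds $\bar{\tau}$; hence $i$ is activated at least once in every physical window of length $\bar{\tau}$. The quantitative heart of the argument is to bound the number of global increments that can elapse while $i$ stays idle: over such a stretch (of length below $\bar{\tau}$) each of the other $M-1$ subsystems fires at most $\lfloor\bar{\tau}/\underline{\tau}\rfloor$ times, because its own gaps are at least $\underline{\tau}$; since every global increment is triggered by at least one firing, at most $(M-1)\lfloor\bar{\tau}/\underline{\tau}\rfloor$ increments can pass before $i$ must fire, yielding $\eta_{1}=(M-1)\lfloor\bar{\tau}/\underline{\tau}\rfloor+1$. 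For the connectivity claim I would note that an activation of $i$ injects all of its outgoing edges into $\mathcal{G}(t)$; since every subsystem fires in the window, $\bigcup_{t=k}^{k+\eta_{1}}\mathcal{G}(t)$ collects all of $\mathcal{E}$ and therefore inherits the strong connectivity of $\mathcal{G}$ from Assumption~\ref{assumption 2}.

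For statement (2) I would first cap the transmission time of the message sent by $i$ at $t(k)$ by $\tau$ using Assumption~\ref{assumption 3}, and then reuse the counting of (1): inside a physical window of length $\tau$ each subsystem fires at most $\lfloor\tau/\underline{\tau}\rfloor$ times, so at most $M\lfloor\tau/\underline{\tau}\rfloor=\eta_{2}$ global increments separate the sending instant from the reception instant, placing reception before $t(k+\eta_{2})$. To obtain the second half, I would apply statement (1) starting at the reception instant: $j$ is guaranteed a fresh activation within the next $\eta_{1}$ increments, and it is precisely in that activation that $j$ reads the buffered data and performs an update; hence the data is consumed before $t(k+\eta_{2}+\eta_{1})=t(k+\eta)$.

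For statement (3) I would bound the spread $s^{i}(k)-s^{j}(k)$ by relaying iteration-count stamps along a directed path in $\mathcal{G}$, whose length is at most the diameter and hence at most $M-1$ hops by strong connectivity. Statement (2) ensures that across each hop the current stamp of the sender is received and used within $\eta$ increments, so after the whole path is traversed the stamp of the slower subsystem reaches the faster one with a cumulative lag of at most $M\eta$ increments; combined with the fact that a single increment advances any one counter by at most one, this would give $|s^{i}(k)-s^{j}(k)|\le M\eta$ after swapping the roles of $i$ and $j$.

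The step I expect to be the main obstacle is statement (3). The per-hop bound from (2) only controls the staleness of the information a subsystem holds, whereas the claim is a uniform, $k$-independent bound on the raw activation counts; turning one into the other requires the coupling supplied by the stamps $s^{i}_{k_{i}}$ together with strong connectivity, rather than timing alone, since Assumptions~\ref{assumption 3} and~\ref{assumption 5} by themselves would permit a fast subsystem's counter to drift with elapsed time. Pinning down the exact constant $M\eta$ (as opposed to $(M-1)\eta$) hinges on accounting for the extra update each relayed stamp needs before it is consumed. The pervasive secondary nuisance is the bookkeeping of the floor functions and the open/closed interval conventions, which must be handled carefully so that $\eta_{1}$ and $\eta_{2}$ emerge with precisely the stated constants.
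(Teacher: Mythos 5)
First, a framing point: the paper contains no proof of Lemma~\ref{lemma 1} to compare against — the lemma, together with Assumption~\ref{assumption 5}, is imported verbatim from \cite{Zhang2019You}, and the authors simply cite that reference. Measured against the argument in that reference, your reconstructions of claims (1) and (2) are essentially the standard ones and are sound up to the floor-function and open/closed-interval bookkeeping that you flag yourself: the idle-window counting with the per-subsystem firing bound $\lfloor\bar{\tau}/\underline{\tau}\rfloor$, the observation that an activation of $i$ injects all of its out-edges into $\mathcal{G}(t)$ so that the union over the window contains all of $\mathcal{E}$ (note this step needs Assumption~\ref{assumption 2}, which you correctly invoke even though the lemma's hypothesis lists only Assumptions~\ref{assumption 3} and~\ref{assumption 5}), and the decomposition $\eta=\eta_{1}+\eta_{2}$ into a $\tau$-window transmission count plus one application of claim (1) for consumption.

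Claim (3) is where your proposal has a genuine gap. Your relay argument needs a mechanism by which a received stamp changes the receiver's counter, and in Algorithm~\ref{Algorithm1} as transcribed there is none: step 4 sets $s^{i}_{k_{i}}=k_{i}$, the raw local iteration count, and the received stamps enter only through $\tilde{s}^{i}_{k_{i}+1}$ in the step-size rule \eqref{AA5}, never through $s^{i}$ itself. For raw counts the bound is simply false: take $M=2$ with one subsystem firing every $\underline{\tau}$ and the other every $\bar{\tau}>\underline{\tau}$; Assumptions~\ref{assumption 3} and~\ref{assumption 5} hold, yet $s^{1}(k)-s^{2}(k)$ grows linearly in $k$ and eventually exceeds the fixed constant $M\eta$. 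Your closing remark that timing alone would let a fast counter drift is exactly this obstruction, but your proposed fix — relaying the slower subsystem's stamp to the faster one — does not repair it, because nothing in the dynamics slows the leader or raises the laggard upon receipt. In \cite{Zhang2019You} the counter is maintained by a max-coupling with received stamps (schematically $s^{i}\leftarrow\max\{s^{i}+1,\tilde{s}^{i}\}$), and the proof runs in the direction opposite to yours: it is the \emph{largest} stamp that propagates — by claims (1) and (2) it crosses one edge within a bounded number of global increments, hence reaches every subsystem along a directed path of length at most $M-1$ within the window whose accounting yields the constant $M\eta$ — while the global maximum increases by at most one per global increment; combining the two traps all counters in a band of width $M\eta$. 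So to complete (3) you must both restore the max-coupling in the counter dynamics (as in the cited reference; the paper's statement of step 4 obscures it) and reverse your relay: the laggards are pulled up to the maximum, rather than the leader being bounded by the laggard's stamp.
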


Now we begin constructing an augmented digraph to address asynchrony and communication delay. To this end, we add $\eta$ virtual subsystems to each subsystem. The total number of real and virtual subsystems is $\hat{M}=M(\eta+1)$, where $\eta$ is defined in Lemma \ref{lemma 1}. The set of virtual subsystems of subsystem $i$ is denoted by $\tilde{\mathcal{V}}^{i}=\{\mathcal{V}^{i}_{1},\mathcal{V}^{i}_{2},\cdots,\mathcal{V}^{i}_{\eta}\}$. For any time $t(k)$, we can use an augmented digraph $\mathcal{\bar{G}}(k)=(\mathcal{\bar{V}},\mathcal{\bar{E}}(k))$ to denote the communication digraph, where $\mathcal{\bar{V}}=\mathcal{V}\bigcup\tilde{\mathcal{V}}^{1}\bigcup\cdots\bigcup\tilde{\mathcal{V}}^{M}$, and $\mathcal{\bar{E}}(k)$ is the set of directed edges between real and virtual subsystems at time $t(k)$. Note that for any $i\in\mathcal{V}$, the $i$-th subsystem in $\mathcal{\bar{G}}(k)$ corresponds to the $i$-th subsystem in $\mathcal{G}$, while the $(Ms+i)$-th subsystem in $\mathcal{\bar{G}}(k)$ represents the virtual subsystem $\mathcal{V}^{i}_{s}$, $s\in\mathbb{Z}^{\eta}_{1}$. Specifically, if $t(k)\in\mathcal{T}^{i}$ and $(i\rightarrow j)\in\mathcal{E}$, it is clear that only one of the edges $(i\rightarrow\mathcal{V}^{j}_{1})$, $(i\rightarrow\mathcal{V}^{j}_{2})$, $\cdots$, $(i\rightarrow\mathcal{V}^{j}_{\eta})$ and $(i\rightarrow j)$ is contained in $\mathcal{\bar{E}}(k)$, while the edges $(\mathcal{V}^{j}_{\eta}\rightarrow\mathcal{V}^{j}_{\eta-1})$, $(\mathcal{V}^{j}_{\eta-1}\rightarrow\mathcal{V}^{j}_{\eta-2})$, $\cdots$, $(\mathcal{V}^{j}_{1}\rightarrow j)$ are always contained in $\mathcal{\bar{E}}(k)$. Obviously, $(i\rightarrow j)\in\mathcal{\bar{E}}(k)$ if there is no communication delay and asynchrony between subsystem $i$ and its out-neighbor $j$. If subsystem $i$ sends data to its out-neighbor $j$ at time $t(k)$ and suffers from delay and asynchrony, and subsystem $j$ will use data to complete the update at time $t(k+n+1)$, $n\leq \eta-1$, then $(i\rightarrow \mathcal{V}^{j}_{n})\in\mathcal{\bar{E}}(k)$.

To facilitate subsequent analysis, we define $w^{i}(t(k))$, $y^{i}(t(k))$, $\lambda^{i}(t(k))$, $\boldsymbol{u}^{i}(t(k))$, $d^{i}(t(k))$ and $z^{i}(t(k))$ as the respective values of the original variables $w^{i}_{k_{i}}$, $y^{i}_{k_{i}}$, $\lambda^{i}_{k_{i}}$, $\boldsymbol{u}^{i}_{k_{i}}$, $d^{i}_{k_{i}}$ and $z^{i}_{k_{i}}$ in Algorithm \ref{Algorithm1} at time $t(k)$. They can be abbreviated as $w^{i}(k)$, $y^{i}(k)$, $\lambda^{i}(k)$, $\boldsymbol{u}^{i}(k)$, $d^{i}(k)$ and $z^{i}(k)$. By the augmented digraph $\mathcal{\bar{G}}(k)$, we reformulate the updates in Algorithm \ref{Algorithm1} as the following augmented system:
\begin{subequations}\label{augmented system}
\begin{align}
\hat{\boldsymbol{w}}(k+1)&=(\hat{\mathcal{A}}(k)\otimes I_{N\rho\times N\rho}^{})\hat{\boldsymbol{z}}(k),
\label{augment system1}\\
\hat{\boldsymbol{y}}(k+1)&=\hat{\mathcal{A}}(k)\hat{\boldsymbol{y}}(k),\label{augment system2}\\
\lambda^{i}(k+1)&=\frac{[w^{i}(k+1)]_{+}}{y^{i}(k+1)},\label{augment system3}\\
\boldsymbol{u}^{i}(k+1)&={\underset{\boldsymbol{u}^{i}\in\mathcal{U}^{i}_{T}(x^{i})}{{\arg\max}}}\big\{-J^{i}(x^{i},\boldsymbol{u}^{i})\!-\!((H^{i})^{\top}\lambda^{i}(k+1))^{\top}\boldsymbol{u}^{i}\big\},\label{augment system4}\\
\hat{\boldsymbol{d}}(k+1)&=\!(\hat{\mathcal{A}}(k)\!\otimes\! I_{N\rho\times\! N\rho})\hat{\boldsymbol{d}}(k)\!+\!\nabla\hat{\boldsymbol{f}}(k\!+1)\!-\!\nabla\hat{\boldsymbol{f}}(k),\label{augment system5}\\
\hat{\boldsymbol{z}}(k+1)&=\hat{\boldsymbol{w}}(k+1)-\hat{\boldsymbol{g}}(k+1),\label{augment system6}
\end{align}
\end{subequations}
where $\hat{\boldsymbol{w}}(k)\!\!=\!\!\text{col}(w^{1}(k),\!\cdots\!,w^{\hat{M}}(k))$,  $\hat{\boldsymbol{z}}(k)\!\!=\!\!\text{col}(z^{1}(k),\!\cdots\!,z^{\hat{M}}(k))$,\\
 $\hat{\boldsymbol{y}}(k)=\text{col}(y^{1}(k),\cdots,y^{\hat{M}}(k))$, $\hat{\boldsymbol{d}}(k)=\text{col}(d^{1}(k),\cdots,d^{\hat{M}}(k))$,
 $\nabla\hat{\boldsymbol{f}}(k)=\text{col}(\nabla f^{1}(\lambda^{1}(k)),\cdots,\nabla f^{M}(\lambda^{M}(k)),\mathbf{0}_{(\hat{M}-M)N\rho})
 =\text{col}(\nabla f^{1}(k),\cdots,\nabla f^{M}(k),\mathbf{0}_{(\hat{M}-M)N\rho})$,
 $\hat{\boldsymbol{z}}(0)=\mathbf{0}, \hat{\boldsymbol{y}}(0)=\mathbf{1}, \hat{\boldsymbol{d}}(0)=\nabla\hat{\boldsymbol{f}}(0)=\mathbf{0}$,
 $\hat{\boldsymbol{g}}(k+1)=\text{col}(\hat{g}^{1}(k+1),\cdots,\hat{g}^{M}(k+1),\mathbf{0}_{(\hat{M}-M)N\rho})$
with
\begin{align*}
 \hat{g}^{i}(k+1)&=\left\{\begin{array}{ll}
          \sum_{s=s^{i}(k)}^{\tilde{s}^{i}(k+1)}\beta d^{i}(k), &\text{if} \ i\in\mathcal{V} \ \text{and} \ t(k)\in\mathcal{T}^{i},\\
          0, &\text{otherwise},
        \end{array}
      \right.
\end{align*}
and $\tilde{s}^{i}(k+1)=\max\nolimits_{s^{j}(k)\in\bar{\mathcal{S}}^{i}}\{s^{j}(k)\}$.
The time-varying weight matrix $\hat{\mathcal{A}}(k)=[\hat{a}_{ji}(k)]_{j,i=1}^{\hat{M}}$ in \eqref{augmented system} at time $t(k)$ is given by
\begin{align*}
\hat{a}_{ji}(k)=\left\{\begin{array}{ll}
          \frac{1}{|N^{i}_{\text{out}}|}, &\text{if} \ t(k)\in\mathcal{T}^{i}, \ i,d\in\mathcal{V}, \  j=Mn+d \\
          \ & \ \text{and} \ n=\tau_{di}(k),\\
          1, &\text{if} \ i=j, i\in\mathcal{V}\ \text{and} \ t(k)\notin\mathcal{T}^{i},\\
          1, &\text{if} \ j=i-M \ \text{and} \ i\notin\mathcal{V},\\
          0, &\text{otherwise},
        \end{array}
      \right.
\end{align*}
where $\tau_{di}(k)$ is the communication delay at time $t(k)$ from subsystem $i$ to subsystem $d$. Note that $\hat{\mathcal{A}}(k)$ is a column-stochastic matrix satisfying $\mathbf{1}^{\top}\hat{\mathcal{A}}(k)=\mathbf{1}^{\top}$ for any $k$.

The preliminary lemma is presented below.
\begin{lemma}\label{proposition 1}
Under Assumptions \ref{assumption 2}-\ref{assumption 5}, consider the sequence $\{\lambda^{i}(k+1)\}_{k\in\mathbb{N}}$ generated by \eqref{augment system3}. For any $k\in\mathbb{N}$ and $i\in\mathcal{V}$, we have that
\begin{align}\label{qiwang}
\|\lambda^{i}(k+1)\!-\!\bar{z}(k)\|_{1}\!\leq\!\pi_{1}\xi^{k}\|\lambda^{i}(1)\!-\!\bar{z}(0)\|_{1}\!+\!\pi_{1}\sum_{s=1}^{k}\xi^{k-s}\|\hat{\boldsymbol{g}}(s)\|_{1},
\end{align}
where $\bar{z}(k)=\left[\frac{1}{\hat{M}}\sum_{i=1}^{\hat{M}}z^{i}(k)\right]_{+}$, $\pi_{1}=2c_{1}$, $c_{1}=M^{M\eta}c\hat{M}$, $c=2\frac{1+\bar{a}^{-b}}{1-\bar{a}^{b}}$, $b=(M-1)(\eta_{1}+1)+M(\tau+1)$, and $\xi=(1-\bar{a}^{b})^{\frac{1}{b}}$.
\end{lemma}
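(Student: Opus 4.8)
The plan is to adapt the standard push-sum consensus analysis (as in \cite{Nedic2014,Zhang2019You}) to the augmented system \eqref{augmented system}, first reducing the projected ratio deviation to an un-projected push-sum estimate and then invoking geometric ergodicity of the time-varying column-stochastic weight products. The initial observation is that the projection can be peeled off: since $y^i(k+1)>0$ is a scalar, the update \eqref{augment system3} equals $\lambda^i(k+1)=\big[w^i(k+1)/y^i(k+1)\big]_+$, and $\bar z(k)=[\hat z_{\mathrm{avg}}(k)]_+$ with $\hat z_{\mathrm{avg}}(k)=\frac{1}{\hat M}\sum_{i=1}^{\hat M}z^i(k)$. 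Because the componentwise operator $\max\{0,\cdot\}$ is $1$-Lipschitz, hence nonexpansive in $\ell_1$, it suffices to bound $\big\|w^i(k+1)/y^i(k+1)-\hat z_{\mathrm{avg}}(k)\big\|_1$.

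Next I would unroll the dynamics. Writing $\Phi(k,s)=\hat{\mathcal A}(k-1)\cdots\hat{\mathcal A}(s)$ with $\Phi(k,k)=I$, iterating \eqref{augment system6} gives $\hat{\boldsymbol z}(k)=(\Phi(k,0)\otimes I)\hat{\boldsymbol z}(0)-\sum_{s=1}^k(\Phi(k,s)\otimes I)\hat{\boldsymbol g}(s)$, so from \eqref{augment system1} and \eqref{augment system2} one obtains $w^i(k+1)=\sum_j[\Phi(k+1,0)]_{ij}z^j(0)-\sum_{s=1}^k\sum_j[\Phi(k+1,s)]_{ij}g^j(s)$ and $y^i(k+1)=\sum_j[\Phi(k+1,0)]_{ij}$. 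Using column-stochasticity $\mathbf 1^\top\hat{\mathcal A}(k)=\mathbf 1^\top$ to evaluate $\hat z_{\mathrm{avg}}(k)=\frac{1}{\hat M}\big(\mathbf 1^\top\hat{\boldsymbol z}(0)-\sum_{s=1}^k\mathbf 1^\top\hat{\boldsymbol g}(s)\big)$, the difference $w^i(k+1)/y^i(k+1)-\hat z_{\mathrm{avg}}(k)$ collapses into a sum of terms of the form $\big(\frac{[\Phi(k+1,s)]_{ij}}{y^i(k+1)}-\frac{1}{\hat M}\big)$ acting on the initial data $z^j(0)$ and the perturbations $g^j(s)$; re-indexing the unrolling to start at $s=1$ lets the homogeneous term be expressed through $\lambda^i(1)-\bar z(0)$, which is why the corresponding factor is $\xi^{k}$.

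The core step is the geometric ergodicity estimate $\big|\frac{[\Phi(k+1,s)]_{ij}}{y^i(k+1)}-\frac{1}{\hat M}\big|\le c\,\xi^{k+1-s}$. Here Lemma \ref{lemma 1} is essential: parts (1)--(2), together with the augmented-graph chain structure, guarantee that the union of the augmented graphs over any $b=(M-1)(\eta_1+1)+M(\tau+1)$ consecutive steps is strongly connected, so that under Assumption \ref{assumption 2} (every entry of $\hat{\mathcal A}(k)$ is $0$, $1$, or at least $\bar a$) the $b$-step product is entrywise bounded below by $\bar a^b$ in the required positions. This furnishes the scrambling coefficient $1-\bar a^b$, the contraction factor $\xi=(1-\bar a^b)^{1/b}$, and the prefactor $c=2\frac{1+\bar a^{-b}}{1-\bar a^b}$, while the same connectivity yields a uniform lower bound on $y^i(k+1)$ that legitimizes the division. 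Substituting this estimate and collecting the dimension-dependent factors ($\hat M=M(\eta+1)$ together with the $M^{M\eta}$ factor absorbing the asynchrony bookkeeping from Lemma \ref{lemma 1}(3)) into $c_1=M^{M\eta}c\hat M$ and $\pi_1=2c_1$ produces \eqref{qiwang}.

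The hard part will be this geometric ergodicity estimate with the precise constants, rather than the reduction or the unrolling, which are routine. The difficulty is tracking how the augmented graph of dimension $\hat M=M(\eta+1)$, the delay/asynchrony pattern encoded in $\hat{\mathcal A}(k)$, and the lower bound $\bar a$ combine to give the positive lower bound $\bar a^b$ on the $b$-step products, and then propagating this bound into the \emph{normalized} transition matrices $[\Phi(k+1,s)]_{ij}/y^i(k+1)$ while keeping $c$, $\xi$ and $b$ exactly as stated. The book-keeping of constants through the augmentation, not the structure of the argument, is where the real work lies.
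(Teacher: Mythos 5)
Your proposal is correct and follows essentially the same route as the paper's proof: unrolling the augmented recursion \eqref{augment system1}--\eqref{augment system6} through the transition matrices $\Phi(k,s)$, invoking the geometric ergodicity of the time-varying column-stochastic products (the paper's Lemma \ref{lemma 2}, imported from the cited push-sum literature) together with the lower bound $y^{i}(k+1)\geq M^{-M\eta}$ to legitimize the normalization, and peeling off the projection by nonexpansiveness --- the paper's Lemma \ref{lemma 3} applied with $\sigma_{1}=y^{i}(k+1)/\hat{M}$ plays exactly the role of your commute-the-division-with-$[\cdot]_{+}$ observation. The only differences are cosmetic: you fold both error sources into a single normalized estimate on $[\Phi(k+1,s)]_{ij}/y^{i}(k+1)-1/\hat{M}$ (whose correct prefactor is $2M^{M\eta}c$ rather than the bare $c$ you wrote, precisely because the $y^{i}$ lower bound enters the denominator), whereas the paper separately bounds the deviation $\mathcal{W}(k,s)=\Phi(k,s)-\phi(k)\mathbf{1}^{\top}$ and the mass term $\|(\mathbf{1}^{\top}\otimes I)\hat{\boldsymbol{z}}(k)\|_{1}$, which is where the factor $2$ in $\pi_{1}=2c_{1}$ arises.
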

\begin{proof}
The proof is provided in Appendix I.
\end{proof}

\section{Convergence Analysis and Distributed Termination Criterion Design}\label{S4}
In this section, based on the augmented system \eqref{augmented system}, we present the convergence analysis and the distributed termination criterion for the APDG algorithm.

\subsection{Convergence Analysis}\label{S4.1}
\begin{theorem}\label{theorem 1}
Under Assumptions \ref{assumption 2}-\ref{assumption 5}, consider the sequence $\{\lambda^{i}(k+1)\}_{k\in\mathbb{N}}$ generated by \eqref{augment system3}. If the fixed step-size $\beta$ satisfies
$\beta\in(0,\beta_{2})$, then $\bm{\lambda}(k+1)$ converges to $\mathbf{1}\otimes\lambda_{\star}$ at an $R$-linear (geometric) rate, where $\bm{\lambda}(k+1)=\text{col}(\lambda^{1}(k+1),\cdots,\lambda^{M}(k+1))$,
that is,
\begin{align}\label{theorem 1 eq2}
\|\bm{\lambda}(k+1)-\mathbf{1}\otimes \lambda_{\star}\|_{2}=\mathcal{O}(\delta^{k})
\end{align}
with $\delta\in(\xi,1)$ given by
\begin{align}\label{theorem 1 eq3}
\delta=\left\{\begin{array}{ll}
          1-(\vartheta-c_{9})\beta, &\text{if} \ \ \beta\in(0,\beta_{1}],  \\
          \sqrt{\mathcal{L}\beta}+\xi, &\text{if} \ \ \beta\in(\beta_{1},\beta_{2}),
        \end{array}
      \right.
\end{align}
where $\beta_{1}=\Big(\frac{\sqrt{\mathcal{L}+4(\vartheta-c_{9})(1-\xi)}-\sqrt{\mathcal{L}}}{2(\vartheta-c_{9})}\Big)^{2}$, $\beta_{2}=\frac{(1-\xi)^{2}}{\mathcal{L}}$, $\mathcal{L}=(c_{11}+c_{10}L)(c_{12}+c_{13})$, $c_{10}=\left((2+\pi_{4})\pi_{1}M+\frac{\sqrt{M}}{\hat{M}}\right)(M\eta+1)\frac{1}{c_{9}}$, $c_{11}=\pi_{1}M(M\eta+1)$, $c_{12}=c_{3}\pi_{4}^{2}(\xi^{-1}+\xi^{-2})\sqrt{N\rho}M$, $c_{13}=M\pi_{4}(c_{3}(1+\xi^{-1})+\sqrt{N\rho})$, $0<c_{9}<\vartheta$, $c_{3}=c_{2}\sqrt{\hat{M}N\rho}$, $c_{2}=c\xi^{-1}$, $c=2\frac{1+\bar{a}^{-b}}{1-\bar{a}^{b}}$, $\pi_{1}=2c_{1}$, $c_{1}=M^{M\eta}c\hat{M}$, $\xi=(1-\bar{a}^{b})^{\frac{1}{b}}\in(0,1)$, and $b=(M-1)(\eta_{1}+1)+M(\tau+1)$. Moreover, $\boldsymbol{u}^{i}(k+1)$ generated by \eqref{augment system4} also converges to the optimal solution $\boldsymbol{u}^{i}_{\star}$ at an $R$-linear rate, that is, $\|\boldsymbol{u}^{i}(k+1)-\boldsymbol{u}^{i}_{\star}\|_{2}=\mathcal{O}(\delta^{k})$.
\end{theorem}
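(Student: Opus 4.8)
The plan is to follow the standard gradient-tracking template: introduce three error quantities for the augmented system \eqref{augmented system}, show they obey a coupled system of scalar inequalities $\mathbf{e}(k+1)\preceq G(\beta)\mathbf{e}(k)$ with a nonnegative matrix $G(\beta)$, and then bound the spectral radius $\rho(G(\beta))$ by the claimed $\delta$. I would take $e_{1}(k)=\max_{i}\|\lambda^{i}(k+1)-\bar{z}(k)\|_{2}$ (consensus error), $e_{2}(k)=\|\bar{z}(k)-\lambda_{\star}\|_{2}$ (optimality gap of the running average), and $e_{3}(k)=\|\hat{\boldsymbol{d}}(k)-\mathbf{1}\otimes\bar{d}(k)\|_{2}$ (gradient-tracking error), where $\bar{d}(k)=\frac{1}{\hat{M}}\sum_{i}d^{i}(k)$. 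The backbone throughout is the column-stochasticity $\mathbf{1}^{\top}\hat{\mathcal{A}}(k)=\mathbf{1}^{\top}$, which telescopes \eqref{augment system5} into the invariant $\sum_{i}d^{i}(k)=\sum_{i}\nabla f^{i}(\lambda^{i}(k))$; this is what lets the local proxies $d^{i}$ track the aggregate dual gradient.

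\emph{Row for $e_{1}$.} Lemma \ref{proposition 1} already gives $e_{1}$ as a $\xi$-contraction driven by $\|\hat{\boldsymbol{g}}(s)\|_{1}$. Since each $\hat{g}^{i}$ sums at most $M\eta+1$ copies of $\beta d^{i}$ by Lemma \ref{lemma 1}(3), I can write $\|\hat{\boldsymbol{g}}(k)\|_{1}\leq(M\eta+1)\beta\sum_{i}\|d^{i}(k)\|_{1}$, then split $d^{i}=\bar{d}+(d^{i}-\bar{d})$, bounding $\|\bar{d}\|\lesssim L\,(e_{1}+e_{2})$ via $L$-Lipschitzness of $\nabla f^{i}$ and the deviation $(d^{i}-\bar{d})$ by $e_{3}$. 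This produces the first row of $G(\beta)$ with diagonal $\xi$ and off-diagonals proportional to $\beta$ (and $\beta L$). \emph{Row for $e_{2}$.} Averaging \eqref{augment system6} and using column-stochasticity shows $\bar{z}(k+1)$ is an inexact projected gradient step on the aggregate dual objective, whose strong-convexity modulus is at least $\vartheta=\min_{i}\vartheta^{i}$; together with nonexpansiveness of $[\cdot]_{+}$ this yields $e_{2}(k+1)\leq(1-\vartheta\beta)e_{2}(k)+\beta\,(\text{terms in }e_{1},e_{3})$, the $\beta$-weighted remainder collecting the proxy-versus-true-gradient gap and the adaptive step-size spread $\alpha^{i}\in[\beta,(M\eta+1)\beta]$. \emph{Row for $e_{3}$.} Subtracting consecutive copies of \eqref{augment system5}, the push-sum block contracts $e_{3}$ at rate $\xi$ while the forcing $\|\nabla\hat{\boldsymbol{f}}(k+1)-\nabla\hat{\boldsymbol{f}}(k)\|\leq L\|\bm{\lambda}(k+1)-\bm{\lambda}(k)\|$ is controlled by $e_{1}$ and $e_{2}$, giving $e_{3}(k+1)\leq\xi e_{3}(k)+L\,(\text{terms in }e_{1},e_{2})$.

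With $G(\beta)$ assembled—diagonal $(\xi,1-\vartheta\beta,\xi)$ and every off-diagonal carrying a factor $\beta$, all lumped into $\mathcal{L}=(c_{11}+c_{10}L)(c_{12}+c_{13})$—the final step is a spectral-radius estimate. Peeling off a slack $c_{9}\in(0,\vartheta)$ from the optimality contraction, I would prove $\rho(G(\beta))\leq\max\{1-(\vartheta-c_{9})\beta,\ \xi+\sqrt{\mathcal{L}\beta}\}$: the first argument dominates when the $e_{2}$-row binds, and the second is the spectral radius of the nonnegative $2\times2$ consensus/tracking block, whose two off-diagonals multiply to $\mathcal{L}\beta$ so that $\rho\leq\xi+\sqrt{\mathcal{L}\beta}$. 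Setting the two arguments equal gives precisely $\beta_{1}$, while $\xi+\sqrt{\mathcal{L}\beta}<1$ forces $\beta<\beta_{2}=(1-\xi)^{2}/\mathcal{L}$, so for $\beta\in(0,\beta_{2})$ one obtains $\mathbf{e}(k)=\mathcal{O}(\delta^{k})$ with the piecewise $\delta\in(\xi,1)$ of \eqref{theorem 1 eq3}. Then $\|\bm{\lambda}(k+1)-\mathbf{1}\otimes\lambda_{\star}\|_{2}\leq\sqrt{M}\,(e_{1}(k)+e_{2}(k))=\mathcal{O}(\delta^{k})$ proves \eqref{theorem 1 eq2}. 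For the primal iterate, the maximizer in \eqref{augment system4} is that of a $\mu^{i}$-strongly concave program, so $\lambda\mapsto\boldsymbol{u}^{i}$ is $\frac{\|H^{i}\|_{2}}{\mu^{i}}$-Lipschitz; hence $\|\boldsymbol{u}^{i}(k+1)-\boldsymbol{u}^{i}_{\star}\|_{2}\leq\frac{\|H^{i}\|_{2}}{\mu^{i}}\|\lambda^{i}(k+1)-\lambda_{\star}\|_{2}=\mathcal{O}(\delta^{k})$.

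The main obstacle is closing the loop at the level of $\hat{\boldsymbol{g}}$. At a constrained optimum the aggregate dual gradient need not vanish—complementary slackness only forces $\sum_{i}\nabla f^{i}(\lambda_{\star})\succeq\mathbf{0}$ with equality on active components—so $\|\hat{\boldsymbol{g}}(s)\|_{1}$ in Lemma \ref{proposition 1} does not obviously tend to zero on the slack directions, which would leave an $\mathcal{O}(\beta)$ floor in the naive consensus bound. The delicate point is therefore to show that the projections $[\cdot]_{+}$ in \eqref{augment system3} and in $\bar{z}(k)$ neutralize exactly those persistent components, so that $\mathbf{e}(k)$ genuinely governs $\|\bm{\lambda}(k+1)-\mathbf{1}\otimes\lambda_{\star}\|_{2}$ and the convolution forcing $\sum_{s}\xi^{k-s}\|\hat{\boldsymbol{g}}(s)\|_{1}$ is dominated by the current entries of $\mathbf{e}(k)$ uniformly in $k$. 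Making the constants $c_{9},\dots,c_{13}$ line up so that the off-diagonal product is exactly $\mathcal{L}\beta$, and verifying that the adaptive, agent-dependent step-sizes $\alpha^{i}$ do not spoil the column-stochastic cancellations, is the remaining bookkeeping.
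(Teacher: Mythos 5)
Your overall architecture matches the paper's: an error decomposition into consensus, optimality, and gradient-tracking quantities; a small-gain assembly producing exactly the two branch conditions $\delta\geq 1-(\vartheta-c_{9})\beta$ and $\delta\geq\xi+\sqrt{\mathcal{L}\beta}$ (whence $\beta_{1}$ by equating them and $\beta_{2}=(1-\xi)^{2}/\mathcal{L}$ from $\delta<1$); and the identical primal conversion, where nonexpansiveness of $\text{proj}_{\mathcal{U}^{i}_{T}(x^{i})}$ together with $\mu^{i}$-strong convexity of $J^{i}$ yields the $\frac{1}{\mu^{i}}\|(H^{i})^{\top}\|_{2}$-Lipschitz map $\lambda\mapsto\boldsymbol{u}^{i}$, just as in Appendix II. The paper tracks four error quantities rather than your three (it keeps $e_{o}(k)=\boldsymbol{d}(k)$ separate from the tracking error $e_{d}$), but that difference is cosmetic.

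The genuine gap is your linchpin $\mathbf{e}(k+1)\preceq G(\beta)\,\mathbf{e}(k)$. No such one-step recursion is available in this asynchronous push-sum setting: because of the perturbation bound of Lemma \ref{proposition 1}, the true inequalities (the paper's \eqref{proposition 2 eq1}--\eqref{proposition 2 eq3}) carry memory, e.g. $\|e_{\lambda}(k+1)\|_{1}\leq\pi_{2}\xi^{k}\|e_{\lambda}(0)\|_{1}+\pi_{3}\sum_{s=0}^{k}\xi^{k-s}\|e_{o}(s)\|_{1}$, so the errors at step $k+1$ depend on the whole history, not on $\mathbf{e}(k)$ alone. You flag this at the end (``the convolution forcing \dots is dominated by the current entries of $\mathbf{e}(k)$ uniformly in $k$'') but supply no mechanism, and pointwise that domination is simply false. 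What closes the loop in the paper is the passage to $\delta$-weighted sup-norms $\|\omega\|_{p}^{\delta,N}=\max_{k\leq N}\|\omega(k)\|_{p}/\delta^{k}$ (Lemmas \ref{lemma 4}--\ref{lemma 5}), under which each convolution collapses to a matrix entry of the form $r/(\delta-\xi)$, yielding the affine iteration $\bm{e}(N+1)\preceq T\bm{e}(N)+\bm{\theta}$ in \eqref{theorem 1 eq17}. This also invalidates your $2\times2$ eigenvalue shortcut $\rho=\xi+\sqrt{ab}$: in $T$ the rate $\delta$ sits inside the entries themselves (denominators $\delta$ and $\delta-\xi$), which is why the paper instead certifies $\rho(T)<1$ through the nonnegative-matrix criterion $\det(I-T)>0$ (Lemma \ref{lemma 7}), from which the two branch conditions and the piecewise $\delta$ of \eqref{theorem 1 eq3} emerge. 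Finally, your self-identified obstacle --- that $\sum_{i}d^{i}(k)$ tracks $\sum_{i}\nabla f^{i}(\lambda^{i}(k))$, which need not vanish at a constrained $\lambda_{\star}$ --- is a fair concern, but note the paper does not explicitly dispatch it either: the projection is absorbed via the $\ell_{1}$ comparison inequality \eqref{lemma eq1} inside Lemma \ref{proposition 1}, and the decay of $e_{o}(k)=\boldsymbol{d}(k)$ is asserted in Lemma \ref{proposition 2}, whose proof is omitted. On that point your proposal is no less complete than the paper; the missing weighted-norm reduction, however, is a concrete hole that your plan as written cannot close.
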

\begin{proof}
See Appendix II.
\end{proof}
\begin{remark}
It is noted that \eqref{theorem 1 eq2} is a typical form of the $R$-linear convergence rate, with the factor $\delta$ strictly satisfying $\delta\in(\xi,1)$. By the definition of $b$, we observe that as the upper bound of communication delay $\tau$ increases, $b$ also increases. Therefore, we conclude that as $\tau$ increases, $\xi$ is monotonically increasing and always less than 1. On the other hand, by the claim (2) of Lemma \ref{lemma 1}, we find that with the increase of $\tau$, the data received by subsystems will also be delayed accordingly. This affects the convergence performance of the algorithm, but it still maintains an $R$-linear convergence rate.
\end{remark}
\begin{remark}
It can be observed from \eqref{theorem 1 eq3} that $\delta>1-(\vartheta-c_{9})\frac{(1-\xi)^{2}}{\mathcal{L}}>1-\vartheta\frac{(1-\xi)^{2}}{\mathcal{L}}>1-\frac{\vartheta}{L}$. It is known that the centralized Nesterov gradient descent algorithm \cite{Nesterov2013}, designed for unconstrained centralized optimization problems, achieves an optimal linear convergence rate of $\mathcal{O}\Big(\Big(1-\sqrt{\frac{\vartheta}{L}}\Big)^{k}\Big)$, where $0<\frac{\vartheta}{L}<1$ is the condition number. Therefore, it is clear that  $\delta>1-\sqrt{\frac{\vartheta}{L}}$, which shows that our linear rate does not violate Nesterov's optimality. However, it should be noted that our APDG algorithm is applicable to constrained distributed optimization problems over directed graphs, such as the concerned DMPC problem in this paper, which makes it more practical.
\end{remark}

\subsection{Distributed Termination Criterion}\label{S4.2}
Typically, Algorithm \ref{Algorithm1} can be terminated when its output solution satisfies certain conditions, which can prevent the algorithm from being updated infinitely and save computational costs. To this end, for Algorithm \ref{Algorithm1} we consider the following termination conditions:
\vspace{-0.25cm}
\begin{subequations}\label{termination 1}
\begin{align}
&\sum\limits_{i=1}^{M}\mathfrak{g}^{i}(x^{i},\boldsymbol{u}^{i}_{k_{i}+1})-b(\epsilon)\prec \epsilon M\mathbf{1}_{N\rho},\label{termination criteria eq1}\\
&\sum\limits_{i=1}^{M} J^{i}(x^{i},\boldsymbol{u}^{i}_{k_{i}+1})-\sum\limits_{i=1}^{M}J^{i}(x^{i},\boldsymbol{u}^{i}_{\star})\leq \epsilon_{g},\label{termination criteria eq2}
\end{align}
\end{subequations}
where $\epsilon_{g}$ is a positive constant. From the tightened global constraint \eqref{tightening constraint}, we note that \eqref{termination criteria eq1} guarantees the satisfaction of the global constraint, and \eqref{termination criteria eq2} ensures the specified suboptimality of the output solution. It should be noted that the optimizer $\boldsymbol{u}^{i}_{\star}$ in \eqref{termination criteria eq2} is not \emph{a priori} known. Additionally, the above termination criterion \eqref{termination 1} requires a central unit, which is relatively difficult to implement in a distributed setting. Therefore, we are interested in developing an easily implementable distributed termination criterion to replace the previous one \eqref{termination 1}, as presented in Proposition \ref{lemma 8}.
\begin{proposition}\label{lemma 8}
Under Assumptions \ref{assumption 2}-\ref{assumption 5}, for Algorithm \ref{Algorithm1}, there exists at least one iteration $k_{i}$ such that two consecutive updates of each subsystem $i$ ($i\in\mathcal{V}$) satisfy
\begin{subequations}\label{lemma 8 eq}
\begin{align}
&\mathfrak{g}^{i}(x^{i},\boldsymbol{u}^{i}_{k_{i}+1})-\mathfrak{g}^{i}(x^{i},\boldsymbol{u}^{i}_{k_{i}})\prec (\epsilon-\epsilon_{b})\mathbf{1}_{N\rho},\label{lemma 8 eq1}\\
&J^{i}(x^{i},\boldsymbol{u}^{i}_{k_{i}+1})\!+\!\|\nabla f^{i}(\lambda^{i}_{k_{i}})\|_{2}\|\triangle\lambda^{i}_{k_{i}}\|_{2}\!+\!f^{i}(\lambda^{i}_{k_{i}+1})\!\leq\! \frac{\epsilon_{g}}{M},\label{lemma 8 eq2}
\end{align}
\end{subequations}
where $\|\triangle\lambda^{i}_{k_{i}}\|_{2}=\|\lambda^{i}_{k_{i}+1}\!-\!\lambda^{i}_{k_{i}}\|_{2}$, and $\epsilon_{b}$ is a positive constant such that $\epsilon-\epsilon_{b}>0$ is sufficiently small. Besides, \eqref{termination criteria eq1} and \eqref{termination criteria eq2} can also be readily satisfied as long as each subsystem satisfies \eqref{lemma 8 eq1} and \eqref{lemma 8 eq2}.
\end{proposition}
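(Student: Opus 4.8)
The plan is to split the statement into two logically independent claims: \textbf{(i)} an \emph{existence} claim, that the local conditions \eqref{lemma 8 eq1}--\eqref{lemma 8 eq2} are met at some finite iteration of each subsystem; and \textbf{(ii)} a \emph{sufficiency} claim, that once every subsystem meets its local conditions, the centralized conditions \eqref{termination criteria eq1}--\eqref{termination criteria eq2} follow. The algebraic backbone of both is a pair of identities. First, from the gradient formula $\nabla f^{i}(\lambda^{i}_{k_{i}+1})=-\big(\mathfrak{g}^{i}(x^{i},\boldsymbol{u}^{i}_{k_{i}+1})-b(\epsilon)/M\big)$; summing over $i$ and using $\sum_{i}b(\epsilon)/M=b(\epsilon)$ rewrites the residual $\sum_{i}\mathfrak{g}^{i}-b(\epsilon)$ in \eqref{termination criteria eq1} as $-\sum_{i}\nabla f^{i}(\lambda^{i}_{k_{i}+1})$. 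Second, since $\boldsymbol{u}^{i}_{k_{i}+1}$ attains the supremum defining the conjugate (see \eqref{augment system4}), substituting it back into $f^{i}$ gives the Fenchel-type identity $J^{i}(x^{i},\boldsymbol{u}^{i}_{k_{i}+1})+f^{i}(\lambda^{i}_{k_{i}+1})=(\lambda^{i}_{k_{i}+1})^{\top}\nabla f^{i}(\lambda^{i}_{k_{i}+1})$, which is exactly the sum of the first and third terms of \eqref{lemma 8 eq2}.

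For the sufficiency claim \textbf{(ii)} I would pair these identities with duality. As $\mathcal{P}(x)$ is convex with a strongly convex objective and the tightened constraint \eqref{tightening constraint} admits a strictly feasible point, strong duality yields $\sum_{i}J^{i}(x^{i},\boldsymbol{u}^{i}_{\star})=-\sum_{i}f^{i}(\lambda_{\star})$, while the KKT conditions of $\mathcal{DP}(x)$ give $\sum_{i}\nabla f^{i}(\lambda_{\star})=\mu\succeq\mathbf{0}$ with complementary slackness $\mu^{\top}\lambda_{\star}=0$. For \eqref{termination criteria eq2} I would write the suboptimality gap as $\sum_{i}J^{i}(x^{i},\boldsymbol{u}^{i}_{k_{i}+1})+\sum_{i}f^{i}(\lambda_{\star})$, insert the Fenchel identity, and reduce the target to the dual-value estimate $\sum_{i}\big(f^{i}(\lambda_{\star})-f^{i}(\lambda^{i}_{k_{i}+1})\big)\le\sum_{i}\|\nabla f^{i}(\lambda^{i}_{k_{i}})\|_{2}\|\triangle\lambda^{i}_{k_{i}}\|_{2}$. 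For \eqref{termination criteria eq1}, the residual identity reduces the target to $\sum_{i}\nabla f^{i}(\lambda^{i}_{k_{i}+1})\succ-\epsilon M\mathbf{1}_{N\rho}$, which I would extract from the gradient-tracking recursion \eqref{AA7}, whose column-stochastic summation preserves $\sum_{i}d^{i}=\sum_{i}\nabla f^{i}(\lambda^{i})$, together with the smallness of the increment imposed by \eqref{lemma 8 eq1} and the limiting nonnegativity $\mu\succeq\mathbf{0}$.

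For the existence claim \textbf{(i)}, the engine is Theorem \ref{theorem 1}: $\lambda^{i}(k+1)\to\lambda_{\star}$ and $\boldsymbol{u}^{i}(k+1)\to\boldsymbol{u}^{i}_{\star}$ at an $R$-linear rate, so the consecutive increments $\boldsymbol{u}^{i}_{k_{i}+1}-\boldsymbol{u}^{i}_{k_{i}}$ and $\triangle\lambda^{i}_{k_{i}}$ both vanish geometrically. Because $\mathfrak{g}^{i}$ is affine in $\boldsymbol{u}^{i}$, the left side of \eqref{lemma 8 eq1} equals $H^{i}(\boldsymbol{u}^{i}_{k_{i}+1}-\boldsymbol{u}^{i}_{k_{i}})\to\mathbf{0}\prec(\epsilon-\epsilon_{b})\mathbf{1}_{N\rho}$; and in \eqref{lemma 8 eq2} the cross term $\|\nabla f^{i}(\lambda^{i}_{k_{i}})\|_{2}\|\triangle\lambda^{i}_{k_{i}}\|_{2}\to0$ because $\nabla f^{i}$ stays bounded along the convergent sequence while the two remaining terms converge to their attained optimal values. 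Hence a qualifying iteration exists.

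The step I expect to be the main obstacle is the dual-value estimate $\sum_{i}\big(f^{i}(\lambda_{\star})-f^{i}(\lambda^{i}_{k_{i}+1})\big)\le\sum_{i}\|\nabla f^{i}(\lambda^{i}_{k_{i}})\|_{2}\|\triangle\lambda^{i}_{k_{i}}\|_{2}$ and its constraint-side counterpart, since the local dual iterates $\lambda^{i}_{k_{i}+1}$ are \emph{not} in exact consensus and therefore $\sum_{i}f^{i}(\lambda^{i}_{k_{i}+1})$ cannot be compared with the minimizer value $\sum_{i}f^{i}(\lambda_{\star})$ by a single first-order inequality. I would control this by using convexity of each $f^{i}$ to bound $f^{i}(\lambda_{\star})-f^{i}(\lambda^{i}_{k_{i}+1})$ by $\nabla f^{i}(\lambda_{\star})^{\top}(\lambda_{\star}-\lambda^{i}_{k_{i}+1})$, decomposing $\lambda^{i}_{k_{i}+1}$ into a common consensus value plus a disagreement term, absorbing the consensus part via $\mu\succeq\mathbf{0}$ and $\mu^{\top}\lambda_{\star}=0$, and bounding the disagreement and the gradient drift through the geometric consensus estimate of Lemma \ref{proposition 1} and the Lipschitz continuity of $\nabla f^{i}$, so that everything is re-expressed in the increment quantities that \eqref{lemma 8 eq1}--\eqref{lemma 8 eq2} already control.
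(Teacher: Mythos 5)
Your overall architecture (existence via the linear convergence of Theorem \ref{theorem 1}; sufficiency by summing the local conditions and invoking duality) matches the paper, and your existence argument is essentially the paper's Step-1: the increments vanish because the iterates converge, $\mathfrak{g}^i$ is affine in $\boldsymbol{u}^i$, and the left side of \eqref{lemma 8 eq2} tends to its optimal value (the paper, like you, needs the per-subsystem limit $J^{i}(x^{i},\boldsymbol{u}^{i}_{\star})+f^{i}(\lambda_{\star})$ to be zero, which it asserts through complementary slackness). Your treatment of \eqref{termination criteria eq1} is also the paper's in substance --- near-feasibility of $\boldsymbol{u}^{i}_{k_{i}}$, forced by $\epsilon-\epsilon_{b}$ being sufficiently small, plus the summed increments from \eqref{lemma 8 eq1} --- although your detour through the gradient-tracking conservation $\sum_{i}d^{i}=\sum_{i}\nabla f^{i}$ is unnecessary: the paper simply adds $\sum_{i}\mathfrak{g}^{i}(x^{i},\boldsymbol{u}^{i}_{k_{i}})-b(\epsilon)\preceq\epsilon_{b}M\mathbf{1}_{N\rho}$ to the sum of \eqref{lemma 8 eq1} over $i$.

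Where you genuinely diverge --- and where your plan would fail as written --- is \eqref{termination criteria eq2}. You route through strong duality at $\lambda_{\star}$, which forces you to prove $\sum_{i}\big(f^{i}(\lambda_{\star})-f^{i}(\lambda^{i}_{k_{i}+1})\big)\le\sum_{i}\|\nabla f^{i}(\lambda^{i}_{k_{i}})\|_{2}\|\triangle\lambda^{i}_{k_{i}}\|_{2}$. You correctly flag this as the obstacle, but your proposed repair (consensus decomposition, Lemma \ref{proposition 1}, Lipschitz gradients) can only deliver that inequality up to additive consensus-error terms, and \eqref{termination criteria eq2} has no slack to absorb them: the target bound is exactly the $\epsilon_{g}$ produced by summing \eqref{lemma 8 eq2}, so any leftover disagreement term breaks the chain. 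The paper never compares with $\lambda_{\star}$ at all. It uses weak duality evaluated directly at the local iterates, $-\sum_{i}f^{i}(\lambda^{i}_{k_{i}})\le\sum_{i}J^{i}(x^{i},\boldsymbol{u}^{i}_{\star})$, together with the one-line convexity estimate $f^{i}(\lambda^{i}_{k_{i}})-f^{i}(\lambda^{i}_{k_{i}+1})\le\|\nabla f^{i}(\lambda^{i}_{k_{i}})\|_{2}\|\triangle\lambda^{i}_{k_{i}}\|_{2}$ between two consecutive iterates of the \emph{same} subsystem; chaining these with the sum of \eqref{lemma 8 eq2} yields \eqref{termination criteria eq2} with no consensus machinery whatsoever. (Your instinct that non-consensus duals are a real subtlety is sound --- the paper's weak-duality step is itself exact only for a common multiplier, since $\sum_{i}(\lambda^{i}_{k_{i}})^{\top}\big(\mathfrak{g}^{i}(x^{i},\boldsymbol{u}^{i}_{\star})-b(\epsilon)/M\big)$ need not be nonpositive for distinct $\lambda^{i}_{k_{i}}$ --- but the intended mechanism is the per-subsystem convexity bound between consecutive iterates, not a comparison with $\lambda_{\star}$, and only that route recovers the exact $\epsilon_{g}$ bound claimed in the proposition.)
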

\begin{proof}
See Appendix III.
\end{proof}

\begin{remark}
It is noted that \eqref{lemma 8 eq1} ensures the satisfaction of the global constraint, while \eqref{lemma 8 eq2} guarantees the specified suboptimality derived from \eqref{termination criteria eq2}. It is also noted that our distributed termination criterion \eqref{lemma 8 eq} does not require a central coordination unit and only needs two parameters,  $\epsilon_{g}$ and $\epsilon_{b}$, to be designed. Compared with the distributed termination criteria based on the finite-time average consensus algorithms \cite{Wang2017,Li2021Distributed,Wang2018Ong} or the gossip protocol \cite{Jin2021}, our proposed one \eqref{lemma 8 eq} introduces fewer designed parameters and does not require any form of coordination. This would be able to significantly simplify implementation.
\end{remark}

\section{The Iterative DMPC Approach}\label{S5}
In this section, we incorporate the proposed APDG algorithm into the iterative DMPC approach. At time $t$, we let
\begin{align}\label{output solotion}
\boldsymbol{u}^{i}_{\bar{k}_{i}}=\text{col}(u^{i}_{0},u^{i}_{1},\cdots,u^{i}_{N-1})
\end{align}
be the solution obtained from the proposed APDG algorithm, where $\bar{k}_{i}$ is the iteration index when the distributed termination criterion \eqref{lemma 8 eq} is satisfied. At time $t$, we take $u^{i}(t)=u^{i}_{0}$ as the DMPC law for subsystem $i$. We now summarize the proposed iterative DMPC approach in Algorithm \ref{Algorithm2}.

\begin{algorithm}[!http]
    \caption{The Iterative DMPC Approach}\label{Algorithm2}
    \begin{algorithmic}[1]
            \STATE At time $t$, subsystem $i$ measures its state $x^{i}(t)$.
            \STATE Subsystem $i$ calls Algorithm \ref{Algorithm1} and gets the output solution $\boldsymbol{u}^{i}_{\bar{k}_{i}}$.
            \STATE Use the first component in $\boldsymbol{u}^{i}_{\bar{k}_{i}}$ to stabilize subsystem $i$.
            \STATE Let $t=t+1$, and then return to step 1.
    \end{algorithmic}
\end{algorithm}

\begin{proposition}\label{lemma 9}
Under Assumptions \ref{assumption 1}-\ref{assumption 5}, if all $x^{i}(t)\in\mathcal{X}^{i}_{f}$ and the distributed termination criterion \eqref{lemma 8 eq} holds, then the output solution \eqref{output solotion} of Algorithm \ref{Algorithm1} satisfies
\begin{align}\label{lemma 9 eq1}
\boldsymbol{u}^{i}_{\bar{k}_{i}}
= K^{i}_{A}x^{i}(t)=\boldsymbol{u}^{i}_{\star},
\end{align}
where $K^{i}_{A}x^{i}(t)\!=\!\text{col}(K^{i}x^{i}(t), K^{i}A^{i}_{K}x^{i}(t),\!\cdots\!,K^{i}(A^{i}_{K})^{N-1}x^{i}(t))$, $A^{i}_{K}=A^{i}+B^{i}K^{i}$, and $\bar{k}_{i}=1$.
\end{proposition}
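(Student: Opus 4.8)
The plan is to show that, whenever every $x^{i}(t)\in\mathcal{X}^{i}_{f}$, the local LQR trajectory $K^{i}_{A}x^{i}(t)$ is simultaneously \emph{feasible} and \emph{optimal} for $\mathcal{P}(x)$, and that Algorithm~\ref{Algorithm1} reproduces it in a single update, so that $\bar{k}_{i}=1$. \textbf{Step 1 (feasibility of the LQR trajectory).} First I would take $\boldsymbol{u}^{i}=K^{i}_{A}x^{i}(t)$, i.e.\ $u^{i}_{\ell}=K^{i}x^{i}_{\ell}$ with $x^{i}_{\ell+1}=A^{i}_{K}x^{i}_{\ell}$ and $x^{i}_{0}=x^{i}(t)\in\mathcal{X}^{i}_{f}$. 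By the invariance property $A^{i}_{K}x^{i}_{\ell}\in\mathcal{X}^{i}_{f}$ in \eqref{terminal set}, every predicted state remains in $\mathcal{X}^{i}_{f}\subset\mathcal{X}^{i}_{\text{MPI}}$, hence $x^{i}_{\ell}\in\mathcal{X}^{i}$, $u^{i}_{\ell}=K^{i}x^{i}_{\ell}\in\mathcal{U}^{i}$ and $x^{i}_{N}\in\mathcal{X}^{i}_{f}$, so $\boldsymbol{u}^{i}\in\mathcal{U}^{i}_{T}(x^{i})$ by \eqref{compact set}. For the tightened global constraint \eqref{tightening constraint} I would use $\mathcal{C}^{i}x^{i}_{\ell}+\mathcal{D}^{i}u^{i}_{\ell}=\mathcal{A}^{i}_{K}x^{i}_{\ell}\preceq\sigma\mathbf{1}_{\rho}$ from \eqref{terminal set}, sum over $i$ to get $\sum_{i}(\mathcal{C}^{i}x^{i}_{\ell}+\mathcal{D}^{i}u^{i}_{\ell})\preceq M\sigma\mathbf{1}_{\rho}=(1-M(N+1)\gamma)\mathbf{1}_{\rho}$, and compare with the $\ell$-th block $(1-M(\ell+1)\epsilon)\mathbf{1}_{\rho}$ of $b(\epsilon)$: since $\ell+1\le N$ and $0<\epsilon\le\gamma$ yield $(\ell+1)\epsilon\le N\gamma<(N+1)\gamma$, the inequality holds \emph{strictly}, so the global constraint is inactive along this trajectory.

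\textbf{Step 2 (optimality).} Because $P^{i}$ solves the ARE, the unconstrained minimizer of the finite-horizon cost $J^{i}(x^{i},\cdot)$ in \eqref{cost} is exactly the LQR feedback $u^{i}_{\ell}=K^{i}x^{i}_{\ell}$ (the terminal cost equals the infinite-horizon cost-to-go). By Step~1 this minimizer is feasible, so it also minimizes $J^{i}$ over $\mathcal{U}^{i}_{T}(x^{i})$. Since the tightened global constraint is strictly inactive, complementary slackness forces the dual optimum $\lambda_{\star}=\mathbf{0}$, the Lagrangian collapses to $\sum_{i}J^{i}$, and $\mathcal{P}(x)$ decouples into $M$ independent local minimizations; therefore $\boldsymbol{u}^{i}_{\star}=K^{i}_{A}x^{i}(t)$.

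\textbf{Step 3 (one-step termination).} With the initialization $z^{i}_{0}=\mathbf{0}$ and $\lambda^{i}_{0}=\mathbf{0}$, the first push-sum step \eqref{AA0} gives $w^{i}_{1}=\sum_{j}a_{ij}z^{j}_{0}=\mathbf{0}$, whence \eqref{AA2} yields $\lambda^{i}_{1}=[\mathbf{0}]_{+}/y^{i}_{1}=\mathbf{0}$ for every $i$, independently of the asynchronous activation order. Then \eqref{AA3} at $\lambda^{i}_{1}=\mathbf{0}$ reduces to $\boldsymbol{u}^{i}_{1}=\arg\min_{\boldsymbol{u}^{i}\in\mathcal{U}^{i}_{T}(x^{i})}J^{i}(x^{i},\boldsymbol{u}^{i})=K^{i}_{A}x^{i}(t)=\boldsymbol{u}^{i}_{\star}$ by Step~2. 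It remains to check the criterion \eqref{lemma 8 eq} at this update (index $k_{i}=0$, so $\bar{k}_{i}=1$). Since $\boldsymbol{u}^{i}_{0}$ is likewise the maximizer in \eqref{AA3} at $\lambda^{i}_{0}=\mathbf{0}$, we have $\boldsymbol{u}^{i}_{0}=\boldsymbol{u}^{i}_{1}$, so $\mathfrak{g}^{i}(x^{i},\boldsymbol{u}^{i}_{1})-\mathfrak{g}^{i}(x^{i},\boldsymbol{u}^{i}_{0})=\mathbf{0}\prec(\epsilon-\epsilon_{b})\mathbf{1}_{N\rho}$, giving \eqref{lemma 8 eq1}. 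For \eqref{lemma 8 eq2} I would evaluate $f^{i}(\lambda^{i}_{1})=f^{i}(\mathbf{0})=J_{\bot}^{i}(\mathbf{0})=-\min_{\boldsymbol{u}^{i}\in\mathcal{U}^{i}_{T}(x^{i})}J^{i}(x^{i},\boldsymbol{u}^{i})=-J^{i}(x^{i},\boldsymbol{u}^{i}_{1})$, so $J^{i}(x^{i},\boldsymbol{u}^{i}_{1})+f^{i}(\lambda^{i}_{1})=0$, while $\triangle\lambda^{i}_{0}=\lambda^{i}_{1}-\lambda^{i}_{0}=\mathbf{0}$ annihilates the middle term, leaving a left-hand side of $0\le\epsilon_{g}/M$. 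Both conditions thus hold at $\bar{k}_{i}=1$, and the output is $\boldsymbol{u}^{i}_{\bar{k}_{i}}=\boldsymbol{u}^{i}_{1}=K^{i}_{A}x^{i}(t)=\boldsymbol{u}^{i}_{\star}$, establishing \eqref{lemma 9 eq1}.

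\textbf{Main obstacle.} The conceptual crux is Step~2: cleanly linking the \emph{constrained} DMPC optimum to the \emph{unconstrained} LQR feedback. This hinges on the ARE choice of $P^{i}$ (so that $u^{i}_{\ell}=K^{i}x^{i}_{\ell}$ is the unconstrained minimizer of $J^{i}$) together with the strict inactivity of the tightened global constraint from Step~1, which both decouples $\mathcal{P}(x)$ and pins $\lambda_{\star}=\mathbf{0}$. The accompanying bookkeeping with $\sigma=\frac{1}{M}-(N+1)\gamma$ and $\epsilon\le\gamma$ in Step~1 must be handled carefully to guarantee strictness across all $N$ blocks of $b(\epsilon)$, and the verification in Step~3 relies on the natural initialization $\lambda^{i}_{0}=\mathbf{0}$, which ensures that both the argument of $J_{\bot}^{i}$ and the increment $\triangle\lambda^{i}_{0}$ vanish.
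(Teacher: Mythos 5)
Your proposal is correct and follows essentially the same route as the paper: establish that when all $x^{i}(t)\in\mathcal{X}^{i}_{f}$ the LQR trajectory $K^{i}_{A}x^{i}(t)$ is feasible with the tightened global constraint \eqref{tightening constraint} strictly inactive, so $\mathcal{P}(x)$ coincides with its unconstrained counterpart and $\boldsymbol{u}^{i}_{\star}=K^{i}_{A}x^{i}(t)$; then use $z^{i}_{0}=\mathbf{0}$ to get $\lambda^{i}_{1}=\mathbf{0}$ via \eqref{AA2}, identify $\boldsymbol{u}^{i}_{1}$ from \eqref{AA3} as the constrained minimizer of $J^{i}$, and verify both termination conditions at $\bar{k}_{i}=1$. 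Your Steps 1--2 are in fact more explicit than the paper's (the block-wise strictness bookkeeping with $\sigma$, $\gamma$, $\epsilon$, and the complementary-slackness argument pinning $\lambda_{\star}=\mathbf{0}$ are only implicit there).

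One point in Step 3 deviates from the algorithm's actual bookkeeping, though harmlessly. You assume $\lambda^{i}_{0}=\mathbf{0}$ and that $\boldsymbol{u}^{i}_{0}$ is the maximizer in \eqref{AA3} at $\lambda^{i}_{0}$, so that $\boldsymbol{u}^{i}_{0}=\boldsymbol{u}^{i}_{1}$ and $\triangle\lambda^{i}_{0}=\mathbf{0}$. But Algorithm \ref{Algorithm1} only requires $\lambda^{i}_{0}\succeq\mathbf{0}$ (not $=\mathbf{0}$) and \emph{forces} $d^{i}_{0}=\nabla f^{i}(\lambda^{i}_{0})=\mathbf{0}$ as an initialization convention, which via $\nabla f^{i}(\lambda^{i}_{k_{i}})=-\big(\mathfrak{g}^{i}(x^{i},\boldsymbol{u}^{i}_{k_{i}})-\frac{b(\epsilon)}{M}\big)$ implicitly means $\mathfrak{g}^{i}(x^{i},\boldsymbol{u}^{i}_{0})=\frac{b(\epsilon)}{M}$; in particular $\boldsymbol{u}^{i}_{0}$ is \emph{not} computed from \eqref{AA3}, and your reading would make $\nabla f^{i}(\lambda^{i}_{0})\neq\mathbf{0}$ in general, contradicting the initialization. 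The paper's proof instead verifies \eqref{lemma 8 eq1} from $\mathfrak{g}^{i}(x^{i},\boldsymbol{u}^{i}_{1})\preceq\big(\tfrac{1}{M}-(N+1)\gamma\big)\mathbf{1}_{N\rho}\prec\frac{b(\epsilon)}{M}=\mathfrak{g}^{i}(x^{i},\boldsymbol{u}^{i}_{0})$, and kills the middle term of \eqref{lemma 8 eq2} by $\nabla f^{i}(\lambda^{i}_{0})=\mathbf{0}$ rather than by $\triangle\lambda^{i}_{0}=\mathbf{0}$ (which need not hold when $\lambda^{i}_{0}\neq\mathbf{0}$). Under the correct convention both conditions still evaluate as you conclude, so the result $\bar{k}_{i}=1$ and \eqref{lemma 9 eq1} stands; the fix to your write-up is a one-line substitution of the paper's initialization convention for your assumed one.
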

\begin{proof}
The proof is provided in Appendix IV.
\end{proof}

\begin{theorem}\label{theorem 2}
Under Assumptions \ref{assumption 1}-\ref{assumption 5}, if the initial state $x(0)$ is within the feasible region, i.e., $x(0)\in\mathcal{D}(x)$, then the iterative DMPC approach satisfies the recursive feasibility.
\end{theorem}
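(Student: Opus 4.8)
The plan is to establish recursive feasibility by induction on the time step $t$, using the classical shift-and-append construction from MPC theory. The induction hypothesis is that $x(t)\in\mathcal{D}(x)$, so that $\mathcal{P}(x(t))$ is feasible and Algorithm \ref{Algorithm1} returns an output $\boldsymbol{u}^{i}_{\bar{k}_{i}}=\text{col}(u^{i}_{0},\cdots,u^{i}_{N-1})$ with associated predictive states $x^{i}_{0}=x^{i}(t)$, $x^{i}_{\ell+1}=A^{i}x^{i}_{\ell}+B^{i}u^{i}_{\ell}$. Two facts about this output will be exploited. First, since the subproblem \eqref{AA3} is solved over $\mathcal{U}^{i}_{T}(x^{i})$, the output automatically satisfies all local constraints, in particular $x^{i}_{N}\in\mathcal{X}^{i}_{f}$. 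Second, because termination occurs only when \eqref{lemma 8 eq} holds, Proposition \ref{lemma 8} guarantees the relaxed global inequality \eqref{termination criteria eq1}; reading it blockwise gives, for each predictive step $\ell\in\mathbb{Z}_{0}^{N-1}$,
\begin{align*}
\sum_{i=1}^{M}\big(\mathcal{C}^{i}x^{i}_{\ell}+\mathcal{D}^{i}u^{i}_{\ell}\big)\prec\big(1-M\ell\epsilon\big)\mathbf{1}_{\rho}.
\end{align*}

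At time $t+1$ the applied law $u^{i}(t)=u^{i}_{0}$ yields $x^{i}(t+1)=x^{i}_{1}$, and I would propose as feasibility certificate the shifted sequence $\tilde{\boldsymbol{u}}^{i}$ defined by $\tilde{u}^{i}_{\ell}=u^{i}_{\ell+1}$ for $\ell\in\mathbb{Z}_{0}^{N-2}$ and $\tilde{u}^{i}_{N-1}=K^{i}x^{i}_{N}$. A short computation shows its predictive states are $\tilde{x}^{i}_{\ell}=x^{i}_{\ell+1}$ for $\ell\in\mathbb{Z}_{0}^{N-1}$ and $\tilde{x}^{i}_{N}=A^{i}_{K}x^{i}_{N}$. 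Verifying $\tilde{\boldsymbol{u}}^{i}\in\mathcal{U}^{i}_{T}(x^{i}(t+1))$ is then routine: the shifted states and inputs inherit $\tilde{x}^{i}_{\ell}\in\mathcal{X}^{i}$ and $\tilde{u}^{i}_{\ell}\in\mathcal{U}^{i}$ from feasibility at time $t$ together with $\mathcal{X}^{i}_{f}\subset\mathcal{X}^{i}_{\text{MPI}}\subset\mathcal{X}^{i}$; the appended input obeys $K^{i}x^{i}_{N}\in\mathcal{U}^{i}$ by the definition of $\mathcal{X}^{i}_{\text{MPI}}$; and the new terminal state satisfies $\tilde{x}^{i}_{N}=A^{i}_{K}x^{i}_{N}\in\mathcal{X}^{i}_{f}$ by the invariance property in \eqref{terminal set}.

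The crux of the argument—and the step I expect to be the main obstacle—is verifying the tightened global constraint \eqref{tightening constraint} for $\tilde{\boldsymbol{u}}^{i}$, since this is exactly where the interplay between the progressive tightening encoded in $b(\epsilon)$ and the relaxation $\epsilon M\mathbf{1}$ furnished by the termination criterion must be made precise. For the first $N-1$ blocks the shifted quantities equal $\sum_{i}(\mathcal{C}^{i}x^{i}_{\ell+1}+\mathcal{D}^{i}u^{i}_{\ell+1})$, which by the displayed relaxed inequality at step $\ell+1$ lies strictly below $(1-M(\ell+1)\epsilon)\mathbf{1}_{\rho}$—precisely the $\ell$-th block of $b(\epsilon)$. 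For the last block, the appended terminal inputs give $\sum_{i}\mathcal{A}^{i}_{K}x^{i}_{N}\preceq M\sigma\mathbf{1}_{\rho}=(1-M(N+1)\gamma)\mathbf{1}_{\rho}$ by \eqref{terminal set}, and the required bound $(1-MN\epsilon)\mathbf{1}_{\rho}$ follows from $\epsilon\leq\gamma$, which yields $N\epsilon\leq N\gamma<(N+1)\gamma$. Hence $\tilde{\boldsymbol{u}}^{i}$ is feasible for $\mathcal{P}(x(t+1))$, so $x(t+1)\in\mathcal{D}(x)$; since $x(0)\in\mathcal{D}(x)$ by hypothesis, the induction closes and recursive feasibility holds for all $t$.
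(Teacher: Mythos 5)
Your proposal is correct and follows essentially the same route as the paper's Appendix~V proof: induction with the shift-and-append candidate $\text{col}(u^{i}_{1},\cdots,u^{i}_{N-1},K^{i}x^{i}_{N})$, the block-wise relaxed bound $\sum_{i=1}^{M}(\mathcal{C}^{i}x^{i}_{\ell}+\mathcal{D}^{i}u^{i}_{\ell})\prec(1-M\ell\epsilon)\mathbf{1}_{\rho}$ extracted from the termination criterion via Proposition~\ref{lemma 8}, and the terminal-block estimate $\sigma M\mathbf{1}_{\rho}=(1-M(N+1)\gamma)\mathbf{1}_{\rho}\prec(1-MN\epsilon)\mathbf{1}_{\rho}$ using $\epsilon\leq\gamma$. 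All steps, including the local-constraint inheritance via \eqref{terminal set} and the invariance $A^{i}_{K}x^{i}_{N}\in\mathcal{X}^{i}_{f}$, match the paper's argument with no gaps.
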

\begin{proof}
The proof is provided in Appendix V.
\end{proof}

\begin{theorem}\label{theorem 3}
Under Assumptions \ref{assumption 1}-\ref{assumption 5}, consider the iterative DMPC approach. Choose appropriate positive parameters $\epsilon_{g}$ and $\epsilon^{\prime}$ such that $\mathcal{X}^{i}_{\diamond}\subseteq\mathcal{X}^{i}_{f}$, where $\mathcal{X}^{i}_{\diamond}=\{x^{i}(t):\|x^{i}(t)\|_{Q^{i}}^{2}\leq\epsilon_{g}+\epsilon^{\prime}\}$. If the initial state $x(0)$ is in the feasible region, i.e., $x(0)\in\mathcal{D}(x)$, then the following results hold:\\
 (1) there exists a time $t^{\prime}$ such that the states of all subsystems can enter their respective terminal sets, i.e., for all $i\in\mathcal{V}$, $x^{i}(t^{\prime})\in\mathcal{X}^{i}_{f}$;\\
 (2) given that (1) holds, the closed-loop system is asymptotically stable at the equilibrium point.
\end{theorem}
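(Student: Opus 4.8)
The plan is to read Theorem \ref{theorem 3} as a suboptimal-MPC stability result and prove its two claims in sequence, using the realized cost as a descent function for claim (1) and the exact LQR regime for claim (2). For claim (1), let $\tilde{V}(t)=\sum_{i=1}^{M}J^{i}(x^{i}(t),\boldsymbol{u}^{i}_{\bar{k}_{i}}(t))$ be the cost of the solution actually returned by Algorithm \ref{Algorithm1} at time $t$, with predicted states $x^{i}_{\bar{k}_{i},0}(t)=x^{i}(t),\dots,x^{i}_{\bar{k}_{i},N}(t)$, and let $V_{\star}(t)$ denote the optimal value of $\mathcal{P}(x(t))$. At time $t+1$ I would form the standard tail candidate, shifting the returned input sequence one step forward and appending the terminal feedback $K^{i}x^{i}_{\bar{k}_{i},N}(t)$ in the last slot. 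Because each returned $\boldsymbol{u}^{i}_{\bar{k}_{i}}(t)\in\mathcal{U}^{i}_{T}(x^{i})$ forces $x^{i}_{\bar{k}_{i},N}(t)\in\mathcal{X}^{i}_{f}$, the appended move keeps the terminal state inside $\mathcal{X}^{i}_{f}$ by the invariance property in \eqref{terminal set}; together with the tightening margin built into \eqref{tightening constraint}, this candidate is feasible for $\mathcal{P}(x(t+1))$, which is exactly the mechanism underlying recursive feasibility in Theorem \ref{theorem 2}.

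The key cancellation comes from the Lyapunov identity $(A^{i}_{K})^{\top}P^{i}A^{i}_{K}-P^{i}=-(Q^{i}+(K^{i})^{\top}R^{i}K^{i})$ associated with the ARE and the optimal gain $K^{i}$: the added stage cost $\|x^{i}_{\bar{k}_{i},N}\|^{2}_{Q^{i}}+\|K^{i}x^{i}_{\bar{k}_{i},N}\|^{2}_{R^{i}}$ plus the new terminal cost $\|A^{i}_{K}x^{i}_{\bar{k}_{i},N}\|^{2}_{P^{i}}$ collapse exactly to the old terminal cost $\|x^{i}_{\bar{k}_{i},N}\|^{2}_{P^{i}}$, so the candidate's cost equals $\tilde{V}(t)-\sum_{i}\|x^{i}(t)\|^{2}_{Q^{i}}-\sum_{i}\|u^{i}_{\bar{k}_{i},0}(t)\|^{2}_{R^{i}}$. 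Combining optimality of $V_{\star}(t+1)$ against this feasible candidate with the suboptimality bound $\tilde{V}(t+1)\le V_{\star}(t+1)+\epsilon_{g}$ from \eqref{termination criteria eq2} yields
\begin{align*}
\tilde{V}(t+1)-\tilde{V}(t)\le-\sum_{i=1}^{M}\|x^{i}(t)\|^{2}_{Q^{i}}+\epsilon_{g}.
\end{align*}
Hence, whenever $\sum_{i}\|x^{i}(t)\|^{2}_{Q^{i}}>\epsilon_{g}+\epsilon'$ the drop is at least $\epsilon'$; since $\tilde{V}\ge0$, this cannot persist, so after at most $\lceil\tilde{V}(0)/\epsilon'\rceil$ steps one reaches a time $t'$ with $\sum_{i}\|x^{i}(t')\|^{2}_{Q^{i}}\le\epsilon_{g}+\epsilon'$. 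As every term is nonnegative, each $\|x^{i}(t')\|^{2}_{Q^{i}}\le\epsilon_{g}+\epsilon'$, i.e. $x^{i}(t')\in\mathcal{X}^{i}_{\diamond}\subseteq\mathcal{X}^{i}_{f}$, proving (1).

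For claim (2) I would switch to the exact LQR regime. Once $x^{i}(t')\in\mathcal{X}^{i}_{f}$ for all $i$, Proposition \ref{lemma 9} gives $\boldsymbol{u}^{i}_{\bar{k}_{i}}=K^{i}_{A}x^{i}(t)$ with $\bar{k}_{i}=1$, so the applied law is $u^{i}(t)=K^{i}x^{i}(t)$ and the closed loop is $x^{i}(t+1)=A^{i}_{K}x^{i}(t)$; the invariance in \eqref{terminal set} keeps every state in $\mathcal{X}^{i}_{f}$ thereafter, making this regime self-sustaining by induction. Inside the terminal set the horizon-$N$ optimal cost telescopes (again by the same Lyapunov identity) to $V_{\star}(x(t))=\sum_{i}\|x^{i}(t)\|^{2}_{P^{i}}$, which is positive definite since $P^{i}>0$, and a direct computation gives $V_{\star}(x(t+1))-V_{\star}(x(t))=-\sum_{i}\big(\|x^{i}(t)\|^{2}_{Q^{i}}+\|K^{i}x^{i}(t)\|^{2}_{R^{i}}\big)\le-\sum_{i}\|x^{i}(t)\|^{2}_{Q^{i}}$. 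A standard Lyapunov argument with the positive-definite, strictly decreasing $V_{\star}$ then yields asymptotic stability of the origin.

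I expect the main obstacle to be the descent step of claim (1): one must verify that the shifted candidate built from the \emph{suboptimal} returned sequence is genuinely feasible for $\mathcal{P}(x(t+1))$ --- both the terminal constraint, handled by the invariance of $\mathcal{X}^{i}_{f}$, and, more delicately, the tightened global constraint \eqref{tightening constraint}, where the stage-dependent tightening in $b(\epsilon)$ and the terminal margin $\sigma$ must be matched against the shift, which is precisely where recursive feasibility from Theorem \ref{theorem 2} is invoked. The second delicate point is ensuring the suboptimality gap $\epsilon_{g}$ enters the inequality additively per step rather than accumulating, so that the sublevel region $\mathcal{X}^{i}_{\diamond}$ is reached in finite time and the hand-off to the exact LQR analysis of claim (2) is clean.
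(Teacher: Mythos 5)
Your proposal is correct and follows essentially the same route as the paper, which omits the details and defers to the suboptimal-MPC argument of \cite{Wang2017} combined with Propositions \ref{lemma 8} and \ref{lemma 9}: the per-step descent inequality $\tilde{V}(t+1)-\tilde{V}(t)\leq-\sum_{i}\|x^{i}(t)\|_{Q^{i}}^{2}+\epsilon_{g}$ (with the additive $\epsilon_{g}$ supplied by the termination criterion \eqref{termination criteria eq2} via Proposition \ref{lemma 8}, and candidate feasibility supplied by the shift construction of Theorem \ref{theorem 2}), finite-time entry into $\mathcal{X}^{i}_{\diamond}\subseteq\mathcal{X}^{i}_{f}$, and the hand-off to the exact LQR regime via Proposition \ref{lemma 9} with the ARE Lyapunov identity. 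Your reconstruction fills in precisely the steps the paper leaves to the citation, with no gaps.
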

\begin{proof}
The proof can be followed from \cite{Wang2017}, along with Propositions \ref{lemma 8} and \ref{lemma 9}. We thus omit the details.
\end{proof}

\section{Numerical Simulation} \label{S6}
In this section, we utilize the water tanks system from \cite{Wang2018Ong} to validate the effectiveness of the proposed iterative DMPC approach, whose dynamics are expressed as follows:
\begin{align*}
x^{i}(t+1)=\begin{bmatrix}
             0.8750&0.1250 \\
             0.1250& 0.8047
           \end{bmatrix}x^{i}(t)+\begin{bmatrix}
                                   0.3\\
                                   0
                                 \end{bmatrix}u^{i}(t), \ i\in\mathbb{Z}^{4}_{1},
\end{align*}
where $x^{i}(t)=[x^{i1}(t),x^{i2}(t)]^{\top}$. The initial states of four subsystems are given as $x^{1}(0)=[-1.8,2]^{\top}$, $x^{2}(0)=[2,-0.8]^{\top}$, $x^{3}(0)=[-1,1]^{\top}$, and $x^{4}(0)=[-0.85,0.85]^{\top}$, respectively. The local input and state constraints are given as $\mathcal{X}^{i}=\{x^{i}(t):-2\leq x^{i1}(t)\leq2,-2\leq x^{i2}(t)\leq2\}$ and $\mathcal{U}^{i}=\{u^{i}(t):-1\leq u^{i}(t)\leq1\}$, respectively, where $i=1,2,3,4$. The global constraint is set as $|\sum_{i=1}^{4}u^{i}(t)|\leq1.5$. For given $Q^{i}=5I$ and $R^{i}=1$ for $i=1,2,3,4$, by solving the discrete-time ARE, $K^{i}$ and $P^{i}$ are calculated as $K^{i}=\begin{bmatrix}-1.4110& -0.6099\end{bmatrix}$ and $P^{i}=[9.5229 \ 3.2122; 3.2122 \ 14.4820]$. The prediction horizon and the fixed step-size are chosen as $N=8$ and $\beta=0.08$. And other parameters are set as $\gamma=0.01$, $\epsilon=0.0005$, $\epsilon_{b}=0.0001$ and $\epsilon_{g}=0.0005$. The edge set of the communication digraph is given by $\mathcal{E}=\{(1\rightarrow 2),(3\rightarrow2),(2\rightarrow4),(4\rightarrow1),(4\rightarrow3),(1\rightarrow3),(3\rightarrow1)\}$. The communication delay is set as $\tau_{ij}=0.0661s$.

\begin{figure}[tbph]
\centerline{\includegraphics[width=8cm]{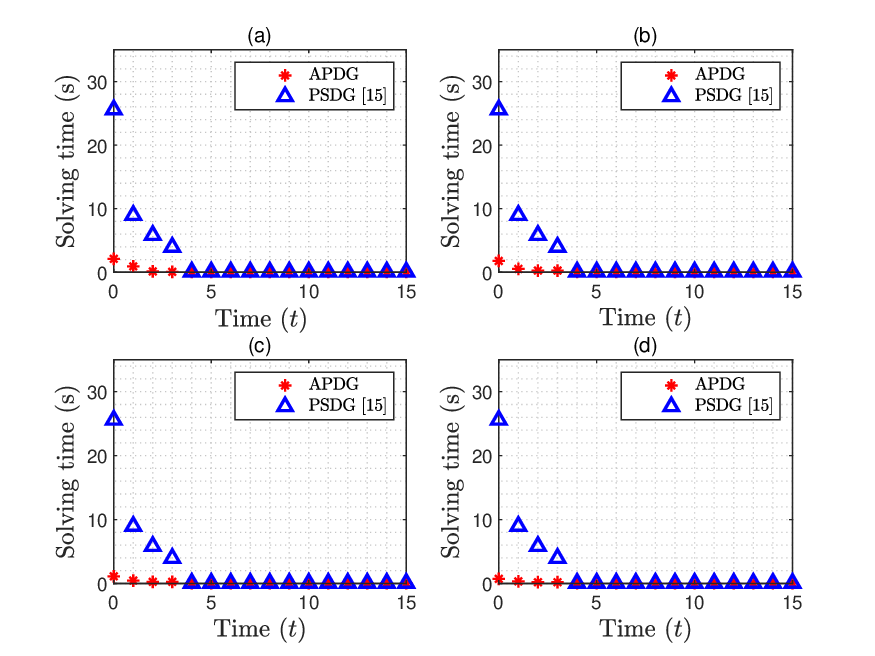}}
\caption{Solving time of the dual problem at each time $t$.}
\label{fig3}
\end{figure}

\begin{figure}[tbph]
\centerline{\includegraphics[width=8cm]{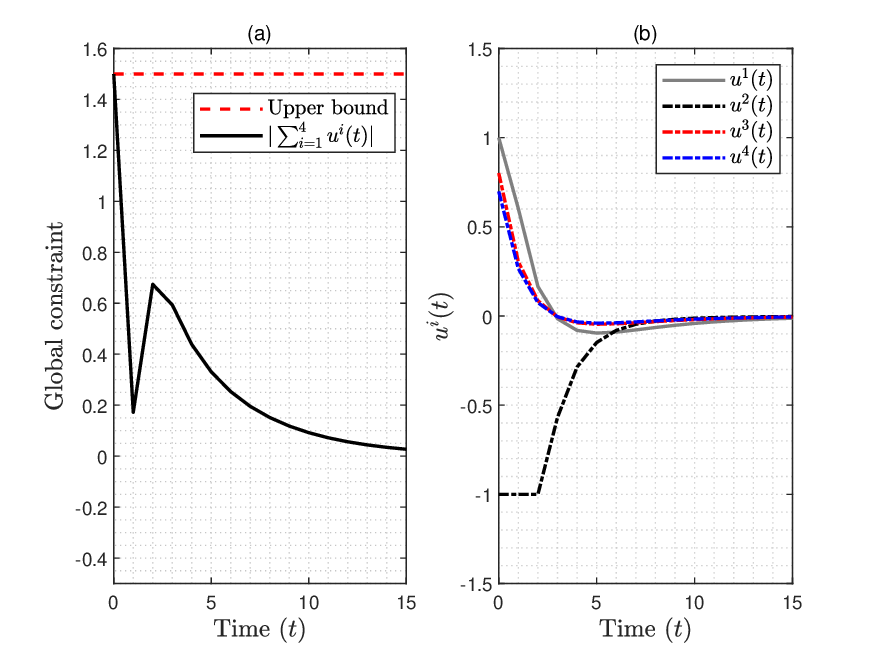}}
\caption{(a) Evolution of the global constraint. (b) Input trajectories of four subsystems.}
\label{fig4}
\end{figure}

The proposed algorithm is implemented by utilizing the YALMIP toolbox in Matlab 2021b on Windows 10 with Intel(R) Core(TM) i5-10200H CPU and 16 GB RAM. We assume that the computational capability of subsystem 1 is twice that of subsystems 2 and 3, and three times that of subsystem 4. To demonstrate the advantages of our APDG algorithm, we conduct numerical comparisons with the PSDG algorithm \cite{Jin2021} using identical settings, including parameters, communication delay, system computational capability, and initial conditions. For a fair comparison, both algorithms use the same distributed termination criterion in \eqref{lemma 8 eq}.

\begin{figure}[tbph]
\centerline{\includegraphics[width=8cm]{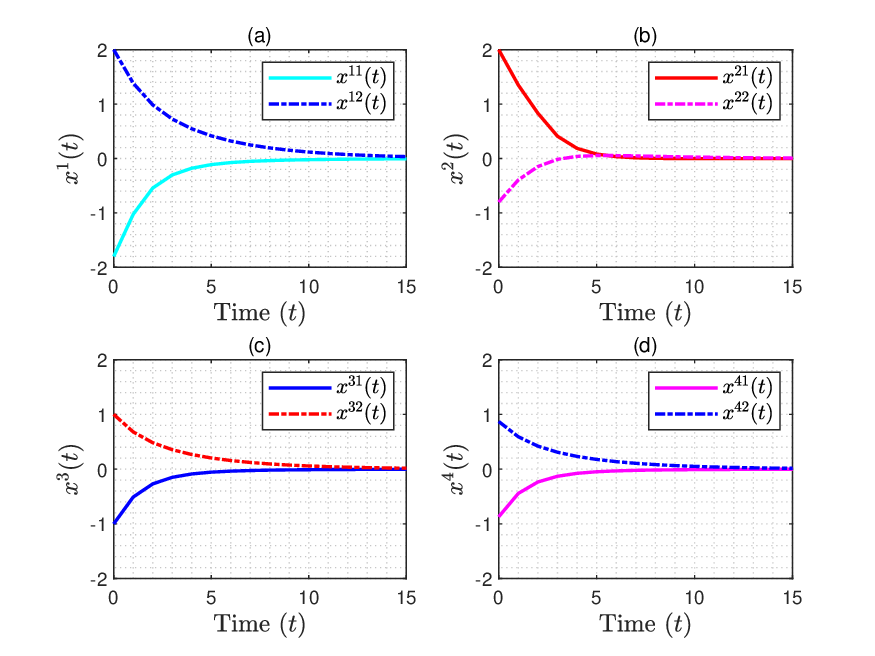}}
\caption{State trajectories of four subsystems.}
\label{fig5}
\end{figure}

Fig. \ref{fig3} shows the solving time of the dual problem under two algorithms. Fig. \ref{fig4} depicts the evolution of the global constraint and the input trajectory of each subsystem under our APDG algorithm. Fig. \ref{fig5} depicts the state trajectories of each subsystem under our APDG algorithm. Notably, since both algorithms converge to solutions with the same level of suboptimality under the same distributed termination criterion \eqref{lemma 8 eq}, the corresponding trajectories of the PSDG algorithm \cite{Jin2021} are omitted in Figs. \ref{fig4} and \ref{fig5}. As shown in Fig. \ref{fig3}, our APDG algorithm achieves a shorter time in solving the dual problem compared to the PSDG algorithm \cite{Jin2021}. It can be observed from Fig. \ref{fig3} that after $t\geq4$, two algorithms have almost the same solving time (requiring only one iteration to solve the dual problem), which validates  the conclusion in Proposition \ref{lemma 9}. In addition, it can be observed from Figs. \ref{fig4} and \ref{fig5} that our APDG algorithm not only ensures the stability of the closed-loop system but also satisfies both local input and state constraints, as well as the global constraint.

\section{Conclusion}\label{S7}
In this paper, we developed an APDG-based iterative DMPC approach for distributed discrete-time linear systems with both local and global constraints over directed communication networks. We proved that the proposed APDG algorithm achieves an $R$-linear convergence rate. We also developed a distributed termination criterion for the APDG algorithm, which ensures the specified suboptimality of the solution and reduces computational overhead. Moreover, the recursive feasibility of the proposed iterative DMPC approach and the stability of the resulting closed-loop system were established. Finally, a numerical example demonstrated that our proposed iterative DMPC approach achieves a shorter solving time while ensuring satisfactory system performance.

\section*{Appendix I: Proof of Lemma \ref{proposition 1}}
\subsection*{A. Preliminaries}
To facilitate the proof of Lemma \ref{proposition 1}, we first present some supporting lemmas.
\begin{lemma}\label{lemma 2}
Under Assumptions 2-4, the following claims hold.

(1) There exists a stochastic-vector $\phi(k)\!=\!\text{col}(\phi^{1}(k),\!\cdots\!,\phi^{\hat{M}}(k))$ satisfying $\mathbf{1}^{\top}\phi(k)=1$, such that for any $i,j\in\mathbb{Z}^{\hat{M}}_{1}$ and $k\geq s\geq0$,
      \begin{align*}
        \left|[\Phi(k,s)]_{ij}-\phi^{i}(k)\right|\leq c\xi^{k-s},
      \end{align*}
      where $\Phi(k,s)$ is the transition matrix satisfying $\Phi(k,s)=\hat{\mathcal{A}}(k)\hat{\mathcal{A}}(k-1)\cdots\hat{\mathcal{A}}(s)$, $c=2\frac{1+\bar{a}^{-b}}{1-\bar{a}^{b}}$, $b=(M-1)(\eta_{1}+1)+M(\tau+1)$, $\xi=(1-\bar{a}^{b})^{\frac{1}{b}}\in(0,1)$, and $\eta_{1}$ is defined in Lemma 1.

(2) Furthermore, for any $i\!\in\!\mathcal{V}$ and $k\!\in\!\mathbb{N}$,  $\sum_{j=1}^{M}[\Phi(k,0)]_{ij}\!\geq\! M^{-M\eta}$, where $\eta$ is defined in Lemma 1.
\end{lemma}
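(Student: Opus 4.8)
The plan is to treat the two claims with different tools: claim~(1) by a weak-ergodicity (Dobrushin-coefficient) argument for the transition matrices, and claim~(2) by a direct mass-conservation and reachability estimate that exploits the augmented-chain structure.

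For claim~(1), the first step is to establish a uniform mixing property over windows of length $b$. Combining the strong connectivity of the real union graph over $\eta_{1}$ activation times (Lemma~\ref{lemma 1}(1)) with the delay routing of Lemma~\ref{lemma 1}(2) and the deterministic virtual chains $\mathcal{V}^{j}_{\eta}\rightarrow\cdots\rightarrow j$ (whose links carry weight $1$), one checks that over any window $[s,s+b-1]$ with $b=(M-1)(\eta_{1}+1)+M(\tau+1)$ the augmented union graph is strongly connected and, in particular, contains a vertex reachable from every vertex along a directed path of at most $b$ edges. Since every transmitted weight is at least $\bar{a}$ by Assumption~\ref{assumption 2}(2) while self-loops and chain links contribute factor $1$, the $b$-step product $\Phi(s+b-1,s)$ then possesses at least one column all of whose entries are bounded below by $\bar{a}^{b}$.

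The second step converts this positive column into a contraction. Passing to the transpose, $\Phi(k,s)^{\top}$ is a product of the row-stochastic matrices $\hat{\mathcal{A}}(\cdot)^{\top}$, so equalization of the columns of $\Phi(k,s)$ is precisely the classical consensus statement for $\Phi(k,s)^{\top}$. A uniformly positive column in each $b$-step block bounds its Dobrushin ergodicity coefficient by $1-\bar{a}^{b}$; multiplying these coefficients over the $\lfloor(k-s)/b\rfloor$ disjoint windows and estimating the residual block crudely yields geometric decay at the rate $\xi=(1-\bar{a}^{b})^{1/b}$. Defining $\phi(k)$ as the stochastic absolute-probability sequence associated with $\{\hat{\mathcal{A}}(k)\}$ (which exists as a stochastic sequence and is the common limit of the columns of $\Phi(k,s)$), a telescoping of the column differences produces $|[\Phi(k,s)]_{ij}-\phi^{i}(k)|\le c\,\xi^{k-s}$; here the factor $2$ in $c$ is the total-variation normalization of the Dobrushin coefficient, the factor $1/(1-\bar{a}^{b})$ sums the geometric tail, and the factor $\bar{a}^{-b}$ absorbs the incomplete window when $k-s$ is not a multiple of $b$.

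For claim~(2), I track the mass originating from the real nodes. For $k>M\eta$, choosing $k'$ so that the first factor spans $M\eta$ steps and using $\Phi(k,0)=\Phi(k,k')\Phi(k'-1,0)$, write
\begin{align*}
\sum_{j=1}^{M}[\Phi(k,0)]_{ij}=\sum_{m=1}^{\hat{M}}[\Phi(k,k')]_{im}\,g_{m},\quad g_{m}:=\sum_{j=1}^{M}[\Phi(k'-1,0)]_{mj},
\end{align*}
where $g_{m}\ge0$ and $\sum_{m=1}^{\hat{M}}g_{m}=M$ by column-stochasticity. The key sub-claim is that within the short window $[k',k]$ of length $M\eta$ the vertex $i\in\mathcal{V}$ is reachable from every vertex $m$ (real or virtual): a virtual vertex drains to its real vertex in at most $\eta$ steps, and any real vertex reaches $i$ within the remaining steps by Lemma~\ref{lemma 1}. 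Such a path has at most $M\eta$ edges, each of weight at least $1/M$ (the transmitted weights being $1/|N^{j}_{\text{out}}|\ge1/M$ and the chain links being $1$), so $[\Phi(k,k')]_{im}\ge M^{-M\eta}$ for all $m$. Substituting gives $\sum_{j=1}^{M}[\Phi(k,0)]_{ij}\ge M^{-M\eta}\sum_{m}g_{m}=M^{-M\eta+1}\ge M^{-M\eta}$; the case $k\le M\eta$ follows from the same reachability estimate applied directly to $\Phi(k,0)$.

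The main obstacle is the window-mixing argument underlying claim~(1): one must verify that the time-varying augmented graph—strongly connected only in the union sense and carrying delays up to $\tau$ encoded through the virtual vertices—mixes within $b$ steps so as to generate a uniformly positive column, and then translate this property into the exact constant $c$ and rate $\xi$ through the Dobrushin-coefficient telescoping. Tracking how the real-graph diameter contributes the term $(M-1)(\eta_{1}+1)$ and the delay chain the term $M(\tau+1)$ to $b$ is the delicate point; once the positive column is secured, the ergodicity estimates and the reachability bound of claim~(2) are routine.
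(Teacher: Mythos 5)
Your proposal takes, in substance, the same route as the paper — with the caveat that the paper's own proof of this lemma is citation-only: claim (1) is referred to \cite{Nedic2014} (Lemma 5 there, as adapted to the augmented delayed graph in \cite{Zhang2019You}) and claim (2) is declared ``identical to'' \cite{Zhang2019You} (Lemma 2 b)). What you sketch — window mixing plus ergodicity-coefficient telescoping against an absolute-probability sequence for (1), and the mass decomposition $\Phi(k,0)=\Phi(k,k')\Phi(k'-1,0)$ with a uniform reachability-weight bound and $\sum_{m}g_{m}=M$ for (2) — is exactly the machinery inside those cited proofs, so you are reconstructing rather than replacing the paper's argument, and the skeleton is the right one.

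Two concrete repairs are needed before the reconstruction is a proof. First, a direction error in your mixing step: for the Dobrushin contraction of the row-stochastic blocks built from $\hat{\mathcal{A}}(\cdot)^{\top}$ you need a uniformly positive column of the \emph{transposed} block, i.e.\ a uniformly positive \emph{row} of the column-stochastic block $\Phi(s+b-1,s)$. Your graph-theoretic condition (``a vertex reachable from every vertex'') is the correct one, but your matrix-level statement (``at least one column all of whose entries are bounded below by $\bar{a}^{b}$'') names the wrong object: a positive row of a row-stochastic matrix does not bound its ergodicity coefficient away from $1$ (two of the remaining rows can still be mutually singular), so as written the contraction does not follow. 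Second, you assert (``one checks'') that every window of length $b=(M-1)(\eta_{1}+1)+M(\tau+1)$ produces that positive row, and you yourself concede this is the main obstacle; since this verification is the entire technical content of the cited lemma, claim (1) of your proposal is a correct outline with its pivotal combinatorial step unproven. Relatedly, in claim (2) your per-hop accounting is too loose to fit the $M\eta$ window: if each hop costs ``wait for the sender's next activation ($\leq\eta_{1}$) plus transit ($\leq\eta$),'' the total is $\eta+(M-1)(\eta_{1}+\eta)$, which exceeds $M\eta$ by $(M-1)\eta_{1}$ even with zero delay. The count closes only via the observation that, by construction of the augmented chains, mass exits a virtual chain exactly at the global step at which the receiving subsystem completes the update that uses the data, and in Algorithm \ref{Algorithm1} that same activation ends with the send of the updated $z^{i}$; hence consecutive hops concatenate at cost $\leq\eta$ each, the path from any augmented vertex $m$ to the real vertex $i$ has length $\leq\eta+(M-1)\eta=M\eta$, and every step carries weight $\geq 1/M$ (transmission weights $1/|N^{j}_{\text{out}}|\geq 1/M$, chain links and self-loops $\leq 1$ but $\geq 1/M$). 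With that observation your sub-claim $[\Phi(k,k')]_{im}\geq M^{-M\eta}$ and the concluding estimate go through as in \cite{Zhang2019You}; without it, the stated window is not justified.
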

\begin{proof}
The proof of claim (1) follows that of [18, Lemma 5]. The proof of claim (2) is identical to that of [17, Lemma 2 b)].
\end{proof}

\begin{lemma}\label{lemma 3}
For any two vectors $a(k)=\text{col}(a^{1}(k),\cdots,a^{n}(k))$ and $b(k)=\text{col}(b^{1}(k),\cdots,b^{n}(k))$, it holds that
\setcounter{equation}{37}
\begin{align}\label{lemma eq1}
\left\|[a(k)]_{+}-\sigma_{1}[b(k)]_{+}\right\|_{1}\leq\left\|a(k)-\sigma_{1}b(k)\right\|_{1},
\end{align}
where $\sigma_{1}>0$ is a constant.
\end{lemma}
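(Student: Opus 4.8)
The plan is to reduce the vector inequality to a scalar (componentwise) one, absorb the scaling $\sigma_{1}$ into the projection using positive homogeneity, and then invoke the nonexpansiveness of the scalar map $x\mapsto\max\{0,x\}$. Since the $\ell_{1}$-norm splits as a sum of absolute values over components and the operator $[\cdot]_{+}$ acts componentwise (by its very definition $[v]_{+}=\text{col}(\max\{0,v^{1}\},\cdots,\max\{0,v^{n}\})$), it suffices to establish, for each index $r\in\mathbb{Z}^{n}_{1}$, the scalar inequality
\[
\left|\max\{0,a^{r}(k)\}-\sigma_{1}\max\{0,b^{r}(k)\}\right|\leq\left|a^{r}(k)-\sigma_{1}b^{r}(k)\right|,
\]
after which summing over $r$ immediately recovers \eqref{lemma eq1}.

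The key simplification I would use is positive homogeneity: because $\sigma_{1}>0$, the scalar satisfies $\sigma_{1}\max\{0,b^{r}(k)\}=\max\{0,\sigma_{1}b^{r}(k)\}$, so the scaling can be pulled inside the max. Writing $\tilde{b}^{r}=\sigma_{1}b^{r}(k)$, the scalar claim reduces to $|\max\{0,a^{r}\}-\max\{0,\tilde{b}^{r}\}|\leq|a^{r}-\tilde{b}^{r}|$, which is precisely the statement that $x\mapsto\max\{0,x\}$ is nonexpansive on $\mathbb{R}$. This nonexpansiveness follows most compactly from the fact that $\max\{0,x\}$ is the pointwise maximum of the two $1$-Lipschitz functions $x\mapsto 0$ and $x\mapsto x$, and hence is itself $1$-Lipschitz; alternatively one may verify it by a direct case analysis on the signs of $a^{r}$ and $\tilde{b}^{r}$, where in each of the four cases the inequality is immediate (equality when both have the same sign pattern, and a strict slack otherwise).

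This argument is elementary and does not rely on any earlier result in the excerpt, so I expect no genuine obstacle. The only point requiring minor care is the handling of the positive scalar $\sigma_{1}$: it is essential that $\sigma_{1}>0$ in order to commute it with the projection as $\sigma_{1}[b]_{+}=[\sigma_{1}b]_{+}$, since the identity fails for $\sigma_{1}<0$. Once that commutation is in place, the result is a clean consequence of the componentwise nonexpansiveness of $[\cdot]_{+}$ and the additivity of $\|\cdot\|_{1}$ over coordinates.
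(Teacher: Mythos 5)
Your proposal is correct, and its skeleton is the same as the paper's: both reduce \eqref{lemma eq1} to the scalar inequality $\left|\max\{0,a^{r}\}-\sigma_{1}\max\{0,b^{r}\}\right|\leq\left|a^{r}-\sigma_{1}b^{r}\right|$ via the componentwise action of $[\cdot]_{+}$ and the additivity of the $\ell_{1}$-norm, then sum over coordinates. Where you differ is in how the scalar inequality is established: the paper keeps $\sigma_{1}$ in place and runs an explicit four-way case analysis on the signs of $a^{i}(k)$ and $b^{i}(k)$ (the case $a^{i}\leq0$, $b^{i}>0$ using $|-\sigma_{1}b^{i}|\leq|a^{i}-\sigma_{1}b^{i}|$, and so on), whereas you first commute the positive scalar with the projection, $\sigma_{1}[b]_{+}=[\sigma_{1}b]_{+}$, reducing everything to the $\sigma_{1}=1$ statement that $x\mapsto\max\{0,x\}$ is nonexpansive, which you get for free as a pointwise maximum of the $1$-Lipschitz functions $0$ and $x$. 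Your route is marginally cleaner and makes the role of the hypothesis $\sigma_{1}>0$ transparent (it is exactly what licenses the homogeneity step, and it fails for $\sigma_{1}<0$); the paper's route is more pedestrian but entirely self-contained, verifying each sign configuration by hand. The two are logically equivalent in content, and indeed the case analysis you mention as an alternative is precisely the paper's proof, so there is no gap on either side.
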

\begin{proof}
We consider the following two cases.

\textbf{Case 1}: Fix $a^{i}(k)\leq 0$ and consider any $b^{i}(k)$. First, for $b^{i}(k)\leq 0$, it follows from the definition of the projection operator that
\begin{align*}
 \left|[a^{i}(k)]_{+}-\sigma_{1}[b^{i}(k)]_{+}\right|=0\leq|a^{i}(k)-\sigma_{1}b^{i}(k)|.
\end{align*}
Then, for $b^{i}(k)>0$, we have that
\begin{align*}
\left|[a^{i}(k)]_{+}-\sigma_{1}[b^{i}(k)]_{+}\right|=\left|-\sigma_{1}b^{i}(k)\right|\leq\left|a^{i}(k)-\sigma_{1}b^{i}(k)\right|.
\end{align*}

\textbf{Case 2}: Fix $a^{i}(k)>0$ and consider any $b^{i}(k)$. When $b^{i}(k)\leq 0$, we have that
\begin{align*}
\left|[a^{i}(k)]_{+}-\sigma_{1}[b^{i}(k)]_{+}\right|=\left|a^{i}(k)\right|\leq\left|a^{i}(k)-\sigma_{1}b^{i}(k)\right|.
\end{align*}
Another one, when $b^{i}(k)>0$, it yields that
\begin{align*}
\left|[a^{i}(k)]_{+}-\sigma_{1}[b^{i}(k)]_{+}\right|=\left|a^{i}(k)-\sigma_{1}b^{i}(k)\right|.
\end{align*}
Summarizing the above two cases, for any $a^{i}(k)$ and $b^{i}(k)$, it is easy to obtain that
\begin{align*}
 \left|[a^{i}(k)]_{+}-\sigma_{1}[b^{i}(k)]_{+}\right|\leq\left|a^{i}(k)-\sigma_{1}b^{i}(k)\right|.
\end{align*}
Finally, we obtain that
\begin{align*}
\nonumber&\left\|[a(k)]_{+}-\sigma_{1}[b(k)]_{+}\right\|_{1}=\sum_{i=1}^{n}\left|[a^{i}(k)]_{+}-\sigma_{1}[b^{i}(k)]_{+}\right|\\
&\leq\sum_{i=1}^{n}\left|a^{i}(k)-\sigma_{1}b^{i}(k)\right|=\|a(k)-\sigma_{1}b(k)\|_{1},
\end{align*}
which ends the proof.
\end{proof}
\subsection*{B. Proof of Lemma \ref{proposition 1}}
By \eqref{augment system1} and \eqref{augment system6}, for any $k\in\mathbb{N}$, it holds that
\begin{align}\label{proposition eq2}
\nonumber&\hat{\boldsymbol{z}}(k+1)=(\hat{\mathcal{A}}(k)\otimes I)\hat{\boldsymbol{z}}(k)-\hat{\boldsymbol{g}}(k+1)\\
\nonumber&=(\hat{\mathcal{A}}(k)\otimes I)(\hat{\mathcal{A}}(k\!-\!1)\otimes I)\hat{\boldsymbol{z}}(k-1)\\
\nonumber&\quad-(\hat{\mathcal{A}}(k)\otimes I)\hat{\boldsymbol{g}}(k)-\hat{\boldsymbol{g}}(k+1)\\
\nonumber&=(\Phi(k,k-1)\otimes I)\hat{\boldsymbol{z}}(k-1)\\
\nonumber&\quad-(\Phi(k,k)\otimes I)\hat{\boldsymbol{g}}(k)-\hat{\boldsymbol{g}}(k+1)\\
\nonumber&=(\Phi(k,0)\otimes I)\hat{\boldsymbol{z}}(0)-(\Phi(k,1)\otimes I)\hat{\boldsymbol{g}}(1)\\
\nonumber&\quad-(\Phi(k,2)\otimes I)\hat{\boldsymbol{g}}(2)-\cdots-(\Phi(k,k)\otimes I)\hat{\boldsymbol{g}}(k)-\hat{\boldsymbol{g}}(k+1)\\
&=(\Phi(k,0)\otimes I)\hat{\boldsymbol{z}}(0)-\sum_{s=1}^{k}(\Phi(k,s)\otimes I)\hat{\boldsymbol{g}}(s)-\hat{\boldsymbol{g}}(k+1),
\end{align}
where the third equation uses $(A\otimes I)(B\otimes I)=(AB)\otimes I$ and $\Phi(k,s)=\hat{\mathcal{A}}(k)\hat{\mathcal{A}}(k-1)\cdots\hat{\mathcal{A}}(s)$, and the last equation can be inductively obtained.
Premultiplying both sides of \eqref{proposition eq2} by the matrix $\hat{\mathcal{A}}(k+1)\otimes I$ gives
\begin{align}\label{proposition eq3}
\nonumber&(\hat{\mathcal{A}}(k+1)\otimes I)\hat{\boldsymbol{z}}(k+1)\\
&=(\Phi(k+1,0)\otimes I)\hat{\boldsymbol{z}}(0)-\sum_{s=1}^{k+1}(\Phi(k+1,s)\otimes I)\hat{\boldsymbol{g}}(s).
\end{align}
Then, premultiplying both sides of \eqref{proposition eq3} by the matrix $(\phi(k+1)\mathbf{1}^{\top})\otimes I$ yields
\begin{align}\label{proposition eq4}
\nonumber&((\phi(k+1)\mathbf{1}^{\top})\otimes I)\hat{\boldsymbol{z}}(k+1)\\
&=((\phi(k+1)\mathbf{1}^{\top})\otimes I)\hat{\boldsymbol{z}}(0)-\sum_{s=1}^{k+1}((\phi(k +1)\mathbf{1}^{\top})\otimes I)\hat{\boldsymbol{g}}(s),
\end{align}
where the above equation utilizes $(A\otimes I)(B\otimes I)=(AB)\otimes I$ and $\mathbf{1}^{\top}\hat{\mathcal{A}}(k)=\mathbf{1}^{\top}$. Subtracting equation \eqref{proposition eq4} from equation \eqref{proposition eq3} yields that
\begin{align}\label{proposition eq5}
\nonumber&(\hat{\mathcal{A}}(k+1)\otimes I)\hat{\boldsymbol{z}}(k+1)=\mathcal{B}(k+1,0)\hat{\boldsymbol{z}}(0)\\
&-\sum_{s=1}^{k+1}\mathcal{B}(k+1,s)\hat{\boldsymbol{g}}(s)+((\phi(k+1)\mathbf{1}^{\top})\otimes I)\hat{\boldsymbol{z}}(k+1),
\end{align}
where $\mathcal{B}(k+1,s)=\mathcal{W}(k+1,s)\otimes I$, $\mathcal{W}(k+1,s)=\Phi(k+1,s)-\phi(k+1)\mathbf{1}^{\top}$, and the relation $(A\otimes I)-(B\otimes I)=(A-B)\otimes I$ is utilized.

It follows from \eqref{augment system1} and \eqref{proposition eq5} that
\begin{align}\label{proposition eq6}
\nonumber&\hat{\boldsymbol{w}}(k+1)=(\hat{\mathcal{A}}(k)\otimes I)\hat{\boldsymbol{z}}(k)\\
&=\mathcal{B}(k,0)\hat{\boldsymbol{z}}(0)-\sum_{s=1}^{k}\mathcal{B}(k,s)\hat{\boldsymbol{g}}(s)+((\phi(k)\mathbf{1}^{\top})\otimes I)\hat{\boldsymbol{z}}(k).
\end{align}
Moreover, for $\hat{\boldsymbol{y}}(k+1)$ in \eqref{augment system2}, it can be inductively obtained as
\begin{align}\label{proposition eq7}
\nonumber\hat{\boldsymbol{y}}(k+1)&=\Phi(k,0)\hat{\boldsymbol{y}}(0)=[\phi(k)\mathbf{1}^{\top}+\mathcal{W}(k,0)]\hat{\boldsymbol{y}}(0)\\
&=\phi(k)\hat{M}+\mathcal{W}(k,0)\mathbf{1}.
\end{align}
By combining \eqref{augment system3}, \eqref{proposition eq6} and \eqref{proposition eq7}, we get
\begin{align}\label{proposition eq8}
\nonumber&\lambda^{i}(k+1)=\frac{[w^{i}(k+1)]_{+}}{y^{i}(k+1)}\\
\nonumber&=\frac{1}{[\phi(k)\hat{M}+\mathcal{W}(k,0)\mathbf{1}]_{i}}\bigg[\Big([\mathcal{W}(k,0)]_{i,:}\otimes I\Big)\hat{\boldsymbol{z}}(0)-\sum_{s=1}^{k}\\
&\quad\Big([\mathcal{W}(k,s)]_{i,:}\otimes I\Big)\hat{\boldsymbol{g}}(s)+\Big([\phi(k)\mathbf{1}^{\top}]_{i,:}\otimes I\Big)\hat{\boldsymbol{z}}(k)\bigg]_{+},
\end{align}
where $[\phi(k)\hat{M}+\mathcal{W}(k,0)\mathbf{1}]_{i}$ denotes the $i$-th element of the vector $\phi(k)\hat{M}+\mathcal{W}(k,0)\mathbf{1}$. We now calculate the denominator of \eqref{proposition eq8}
\begin{align}\label{proposition eq11}
\nonumber&[\phi(k)\hat{M}+\mathcal{W}(k,0)\mathbf{1}]_{i}=[\phi(k)\mathbf{1}^{\top}\mathbf{1}+\mathcal{W}(k,0)\mathbf{1}]_{i}\\
&=[\Phi(k,0)\mathbf{1}]_{i}=\sum_{j=1}^{\hat{M}}[\Phi(k,0)]_{ij}\geq\sum_{j=1}^{M}[\Phi(k,0)]_{ij}\geq M^{-M\eta},
\end{align}
where the first inequality follows from the fact that all elements of $\Phi(k,0)$ are nonnegative real numbers, and the second inequality is derived from the claim (2) of Lemma \ref{lemma 2}. For $\bar{z}(k)$, we have that
\begin{align}\label{proposition eq9}
\bar{z}(k)=\left[\frac{1}{\hat{M}}\sum_{i=1}^{\hat{M}}z^{i}(k)\right]_{+}=\left[\frac{1}{\hat{M}}(\mathbf{1}^{\top}\otimes I)\hat{\boldsymbol{z}}(k)\right]_{+}.
\end{align}
It follows from \eqref{proposition eq8} and \eqref{proposition eq9} that
\begin{align}\label{proposition eq10}
\nonumber&\|\lambda^{i}(k+1)-\bar{z}(k)\|_{1}\\
\nonumber&=\bigg\|\frac{1}{[\phi(k)\hat{M}+\mathcal{W}(k,0)\mathbf{1}]_{i}}\bigg[\Big([\mathcal{W}(k,0)]_{i,:}\otimes I\Big)\hat{\boldsymbol{z}}(0)\\
\nonumber&\quad-\sum_{s=1}^{k}\Big([\mathcal{W}(k,s)]_{i,:}\otimes I\Big)\hat{\boldsymbol{g}}(s)\!+\!\Big([\phi(k)\mathbf{1}^{\top}]_{i,:}\otimes I\Big)\hat{\boldsymbol{z}}(k)\bigg]_{+}\\
\nonumber&\quad-\frac{[\phi(k)\hat{M}+\mathcal{W}(k,0)\mathbf{1}]_{i}[(\mathbf{1}^{\top}\otimes I)\hat{\boldsymbol{z}}(k)]_{+}}{\hat{M}[\phi(k)\hat{M}+\mathcal{W}(k,0)\mathbf{1}]_{i}}\bigg\|_{1}\\
\nonumber&\leq\bigg\|\frac{\hat{M}\Big(\big([\mathcal{W}(k,0)]_{i,:}\otimes I\big)\hat{\boldsymbol{z}}(0)
-\sum_{s=1}^{k}\big([\mathcal{W}(k,s)]_{i,:}\otimes I\big)\hat{\boldsymbol{g}}(s)\Big)}{\hat{M}[\phi(k)\hat{M}+\mathcal{W}(k,0)\mathbf{1}]_{i}}\\
\nonumber&\quad-\frac{[\mathcal{W}(k,0)\mathbf{1}]_{i}(\mathbf{1}^{\top}\otimes I)\hat{\boldsymbol{z}}(k)}{\hat{M}[\phi(k)\hat{M}+\mathcal{W}(k,0)\mathbf{1}]_{i}}\bigg\|_{1}\\
\nonumber&\leq\bigg\|\frac{\big([\mathcal{W}(k,0)]_{i,:}\otimes I\big)\hat{\boldsymbol{z}}(0)
\!-\!\sum_{s=1}^{k}\big([\mathcal{W}(k,s)]_{i,:}\otimes I\big)\hat{\boldsymbol{g}}(s)}{[\phi(k)\hat{M}+\mathcal{W}(k,0)\mathbf{1}]_{i}}\bigg\|_{1}\\
&\quad+\bigg\|\frac{[\mathcal{W}(k,0)\mathbf{1}]_{i}(\mathbf{1}^{\top}\otimes I)\hat{\boldsymbol{z}}(k)}{\hat{M}[\phi(k)\hat{M}+\mathcal{W}(k,0)\mathbf{1}]_{i}}\bigg\|_{1},
\end{align}
where the first inequality is derived from Lemma \ref{lemma 3} and $\left([\phi(k)\mathbf{1}^{\top}]_{i,:}\otimes I\right)\hat{\boldsymbol{z}}(k)=[\phi(k)]_{i}(\mathbf{1}^{\top}\otimes I)\hat{\boldsymbol{z}}(k)$, and the last inequality is obtained by employing the triangle inequality. By \eqref{proposition eq11}, the inequality \eqref{proposition eq10} can be rewritten as
\begin{align}\label{proposition eq12}
\nonumber&\|\lambda^{i}(k+1)-\bar{z}(k)\|_{1}\\
\nonumber&\leq\frac{M^{M\eta}}{\hat{M}}\bigg(\Big\|\hat{M}([\mathcal{W}(k,0)]_{i,:}\otimes I)\hat{\boldsymbol{z}}(0)\Big\|_{1}+\Big\|\hat{M}\sum_{s=1}^{k}([\mathcal{W}(k,s)]_{i,:}\\
\nonumber&\quad\otimes I)\hat{\boldsymbol{g}}(s)\Big\|_{1}+\Big\|[\mathcal{W}(k,0)\mathbf{1}]_{i}(\mathbf{1}^{\top}\otimes I)\hat{\boldsymbol{z}}(k)\Big\|_{1}\bigg)\\
\nonumber&\leq M^{M\eta}\bigg(\Big\|([\mathcal{W}(k,0)]_{i,:}\otimes I)\Big\|_{1}\Big\|\hat{\boldsymbol{z}}(0)\Big\|_{1}+\sum_{s=1}^{k}\Big\|[\mathcal{W}(k,s)]_{i,:}\\
\nonumber&\quad\otimes I\Big\|_{1}\Big\|\hat{\boldsymbol{g}}(s)\Big\|_{1}+\max\limits_{1\leq j\leq\hat{M}}\Big|[\mathcal{W}(k,0)]_{ij}\Big|\Big\|(\mathbf{1}^{\top}\otimes I)\hat{\boldsymbol{z}}(k)\Big\|_{1}\bigg)\\
&\leq c_{1}\bigg(\xi^{k}\|\hat{\boldsymbol{z}}(0)\|_{1}+\sum_{s=1}^{k}\xi^{k-s}\|\hat{\boldsymbol{g}}(s)\|_{1}+\xi^{k}\|(\mathbf{1}^{\top}\otimes I)\hat{\boldsymbol{z}}(k)\|_{1}\bigg),
\end{align}
where $c_{1}=M^{M\eta}c\hat{M}$, the second inequality uses the relation $\|Ax\|_{1}\leq\|A\|_{1}\|x\|_{1}$ with matrix $A$ and vector $x$, and the last inequality follows from $\|[\mathcal{W}(k,s)]_{i,:}\otimes I\|_{1}=\|[\mathcal{W}(k,s)]_{i,:}\|_{1}=\max\limits_{1\leq j\leq \hat{M}}|[\mathcal{W}(k,s)]_{ij}|\leq\|\mathcal{W}(k,s)\|_{1}=\|\Phi(k,s)-\phi(k)\mathbf{1}^{\top}\|_{1}\leq c\hat{M}\xi^{k-s}$ (this relation can be obtained by the claim (1) of Lemma \ref{lemma 2}).

Premultiplying both sides of \eqref{proposition eq3} by $\mathbf{1}^{\top}\otimes I$, and combining $\mathbf{1}^{\top}\hat{\mathcal{A}}(k)=\mathbf{1}^{\top}$ and $(A\otimes I)(B\otimes I)=(AB)\otimes I$, we obtain
\begin{align}\label{proposition eq13}
(\mathbf{1}^{\top}\otimes I)\hat{\boldsymbol{z}}(k)=(\mathbf{1}^{\top}\otimes I)\hat{\boldsymbol{z}}(0)-\sum_{s=1}^{k}(\mathbf{1}^{\top}\otimes I)\hat{\boldsymbol{g}}(s).
\end{align}
Taking 1-norm on both sides of equation \eqref{proposition eq13} results in
\begin{align}\label{proposition eq14}
\|(\mathbf{1}^{\top}\otimes I)\hat{\boldsymbol{z}}(k)\|_{1}\leq\|\hat{\boldsymbol{z}}(0)\|_{1}+\sum_{s=1}^{k}\|\hat{\boldsymbol{g}}(s)\|_{1},
\end{align}
where the triangle inequality, $\|Ax\|_{1}\leq\|A\|_{1}\|x\|_{1}$ and $\|\mathbf{1}^{\top}\otimes I\|_{1}=1$ are utilized. Substituting \eqref{proposition eq14} into \eqref{proposition eq12} yields \eqref{qiwang} as expected. The proof is thus completed.
\section*{Appendix II: Proof of Theorem \ref{theorem 1}}
We prove Theorem \ref{theorem 1} in the following three parts.

\textcolor[rgb]{0.15294,0.25098,0.5451}{\textbf{\emph{Part I}.}} \emph{Bounding consensus, gradient tracking, and optimization errors:}

At the global index $k$, we define the consensus, gradient tracking, and optimization errors as follows:
\begin{subequations}
\begin{align}
e_{\lambda}(k)&=\bm{\lambda}(k+1)-\mathbf{1}\otimes \bar{z}(k),\label{theorem 1 eq4}\\
e_{d}(k)&=d^{i}(k)-\overline{\nabla f}(k), \ e_{o}(k)=\boldsymbol{d}(k), \label{theorem 1 eq5}\\
e_{z}(k)&=\mathbf{1}\otimes\bar{z}(k)-\bm{\lambda}_{\star},\label{theorem 1 eq6}
\end{align}
\end{subequations}
where $i\in\mathcal{V}$, $\bm{\lambda}(k+1)=\text{col}(\lambda^{1}(k+1),\cdots,\lambda^{M}(k+1))$, $\bar{z}(k)=\left[\frac{1}{\hat{M}}\sum_{i=1}^{\hat{M}} z^{i}(k)\right]_{+}$, $\overline{\nabla f}(k)=\frac{y^{i}(k)}{\hat{M}}\sum_{i=1}^{\hat{M}}\nabla f^{i}(k)$, $\boldsymbol{d}(k)=\text{col}(d^{1}(k),\cdots,d^{M}(k))$, and $\bm{\lambda}_{\star}=\text{col}(\lambda_{\star},\cdots,\lambda_{\star})=\mathbf{1}\otimes \lambda_{\star}$.


We now present the following lemma that establishes bounds for the above error quantities \eqref{theorem 1 eq4}-\eqref{theorem 1 eq6}.
\begin{lemma}\label{proposition 2}
Suppose that Assumptions \ref{assumption 2}-\ref{assumption 5} hold. The error quantities defined in \eqref{theorem 1 eq4}-\eqref{theorem 1 eq6} satisfy the following relations:
\begin{subequations}\label{proposition 2 eq}
\begin{align}
\|e_{\lambda}(k+1)\|_{1}\leq&\pi_{2}\xi^{k}\|e_{\lambda}(0)\|_{1}+\pi_{3}\sum\limits_{s=0}^{k}\xi^{k-s}\|e_{o}(s)\|_{1},\label{proposition 2 eq1}\\
\|e_{d}(k+1)\|_{1}\leq &\varpi\xi^{k}\!+\!c_{4}\sum\limits_{s=0}^{k}\xi^{k-s}(\|e_{\lambda}(s)\|_{1}+\|e_{z}(s)\|_{2}),\label{proposition 2 eq2}\\
\nonumber\|e_{o}(k+1)\|_{1}\leq& \varpi M\xi^{k}+(c_{4}M+c_{5})\sum\limits_{s=0}^{k}\xi^{k-s}\|e_{\lambda}(s)\|_{1}\\
+(c_{4}M+&c_{5})\sum\limits_{s=0}^{k}\xi^{k-s}\|e_{z}(s)\|_{2}\!+\!c_{5}\|e_{d}(k)\|_{1},\label{proposition 2 eq3}\\
\|e_{z}(k+1)\|_{2}\leq&c_{6}\|e_{o}(k)\|_{1}\!+\!c_{7}\|e_{\lambda}(k)\|_{1}+c_{8}\|e_{z}(k)\|_{2},\label{proposition 2 eq4}
\end{align}
\end{subequations}
where $\pi_{2}=\pi_{1}\xi$, $\pi_{3}=\pi_{1}M(M\eta+1)\beta$, $\varpi=c\|\hat{\boldsymbol{d}}(0)\|_{1}+c_{3}L\|\bm{\lambda}(1)-\bm{\lambda}(0)\|_{2}$,  $c_{4}=c_{3}\pi_{4}(1+\xi^{-1})L\beta$, $c_{3}=c_{2}\sqrt{\hat{M}N\rho}$, $c_{2}=c\xi^{-1}$, $c_{5}=\sqrt{N\rho}M\pi_{4}L\beta$, $c_{6}=\frac{\sqrt{M}}{\hat{M}}(M\eta+1)\beta$, $c_{7}=2+\pi_{4}-\beta L$, $\pi_{4}>1$, $c_{8}=1-\beta L$,
and $\beta<\frac{1}{L}$.
\end{lemma}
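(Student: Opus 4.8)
The plan is to read the four bounds as input–output estimates for the asynchronous, delayed push-sum/gradient-tracking dynamics, and to observe that three of them (\eqref{proposition 2 eq1}--\eqref{proposition 2 eq3}) follow the same ``unroll the recursion through the transition matrix, split off the consensus component, and control the driving term by Lipschitz continuity'' template already rehearsed in Appendix I, while the contraction \eqref{proposition 2 eq4} additionally needs a strong-convexity descent argument. Throughout I would reuse two facts: the geometric mixing estimate $\|\Phi(k,s)-\phi(k)\mathbf{1}^{\top}\|_{1}\leq c\hat{M}\xi^{k-s}$ from claim (1) of Lemma \ref{lemma 2}, and the observation that, by claim (3) of Lemma \ref{lemma 1}, each block $\hat{g}^{i}(k+1)$ is a sum of at most $M\eta+1$ copies of $\beta d^{i}(k)$, so that $\|\hat{\boldsymbol{g}}(k+1)\|_{1}\leq (M\eta+1)\beta\|\boldsymbol{d}(k)\|_{1}=(M\eta+1)\beta\|e_{o}(k)\|_{1}$.

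First, \eqref{proposition 2 eq1} is essentially a corollary of Lemma \ref{proposition 1}. Writing $\|e_{\lambda}(k+1)\|_{1}=\sum_{i\in\mathcal{V}}\|\lambda^{i}(k+2)-\bar{z}(k+1)\|_{1}$ and applying \eqref{qiwang} at index $k+1$ to each summand, the initial term collects into $\pi_{1}\xi^{k+1}\|e_{\lambda}(0)\|_{1}=\pi_{2}\xi^{k}\|e_{\lambda}(0)\|_{1}$, while substituting the $\hat{\boldsymbol{g}}$ bound above and reindexing $s\mapsto s-1$ turns the convolution $\pi_{1}M\sum_{s=1}^{k+1}\xi^{k+1-s}\|\hat{\boldsymbol{g}}(s)\|_{1}$ into $\pi_{3}\sum_{s=0}^{k}\xi^{k-s}\|e_{o}(s)\|_{1}$ with $\pi_{3}=\pi_{1}M(M\eta+1)\beta$.

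Next, for \eqref{proposition 2 eq2} I would unroll the gradient-tracking recursion \eqref{augment system5} exactly as \eqref{proposition eq2} was unrolled, subtract the $\phi(k)\mathbf{1}^{\top}$ average, and apply the mixing estimate; the $\xi^{k}$ term $\varpi\xi^{k}$ absorbs the initialization and the first gradient increment, and the remaining increments are controlled by $L$-Lipschitzness through $\|\nabla f^{i}(\lambda^{i}(s+1))-\nabla f^{i}(\lambda^{i}(s))\|\leq L\|\lambda^{i}(s+1)-\lambda^{i}(s)\|$. The crucial bookkeeping step is the splitting $\lambda^{i}(s+1)-\lambda^{i}(s)=\big(\lambda^{i}(s+1)-\bar{z}(s)\big)-\big(\lambda^{i}(s)-\bar{z}(s-1)\big)+\big(\bar{z}(s)-\bar{z}(s-1)\big)$, so that the first two differences feed $\|e_{\lambda}\|_{1}$-type terms and the consensus-point step $\bar{z}(s)-\bar{z}(s-1)$ is bounded through $e_{z}(s)$ and $e_{z}(s-1)$; the two index shifts produce the $(1+\xi^{-1})$ factor inside $c_{4}$. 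Estimate \eqref{proposition 2 eq3} then follows by writing $\|e_{o}(k+1)\|_{1}=\sum_{i\in\mathcal{V}}\|d^{i}(k+1)\|_{1}$, splitting each $d^{i}(k+1)=e_{d}^{i}(k+1)+\overline{\nabla f}(k+1)$, inserting \eqref{proposition 2 eq2} for the tracking error (which contributes the $\varpi M\xi^{k}$ and $c_{4}M$ convolution terms), and bounding the averaged gradient $\overline{\nabla f}(k+1)$ near the optimum by Lipschitzness (generating the $c_{5}$ terms and the residual $c_{5}\|e_{d}(k)\|_{1}$).

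The hard part is the contraction \eqref{proposition 2 eq4} for the distance-to-optimum $e_{z}$. Here I would first show that the averaged iterate $\bar{z}(k+1)=\big[\tfrac{1}{\hat{M}}(\mathbf{1}^{\top}\otimes I)\hat{\boldsymbol{z}}(k+1)\big]_{+}$ is an \emph{inexact} projected-gradient step: column-stochasticity gives $\tfrac{1}{\hat{M}}(\mathbf{1}^{\top}\otimes I)\hat{\boldsymbol{z}}(k+1)=\tfrac{1}{\hat{M}}(\mathbf{1}^{\top}\otimes I)\hat{\boldsymbol{z}}(k)-\tfrac{1}{\hat{M}}(\mathbf{1}^{\top}\otimes I)\hat{\boldsymbol{g}}(k+1)$, and the gradient-tracking conservation $(\mathbf{1}^{\top}\otimes I)\hat{\boldsymbol{d}}(k)=(\mathbf{1}^{\top}\otimes I)\nabla\hat{\boldsymbol{f}}(k)$, which holds inductively from $\hat{\boldsymbol{d}}(0)=\nabla\hat{\boldsymbol{f}}(0)$, identifies the aggregate driving term with a $\beta$-scaled full gradient up to the proxy error $e_{o}$ and the adaptive-step-count normalization. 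Then, invoking nonexpansiveness of $[\cdot]_{+}$ (Lemma \ref{lemma 3} with $\sigma_{1}=1$) together with the KKT fixed-point identity $\lambda_{\star}=[\lambda_{\star}-\beta\nabla f(\lambda_{\star})]_{+}$, and using the $\vartheta$-strong convexity and $L$-Lipschitz gradient of $\sum_{i}f^{i}$, the true projected-gradient map contracts $\|\bar{z}(k)-\lambda_{\star}\|$ by the factor recorded as $c_{8}$, while the two sources of inexactness---using the proxy $d^{i}$ rather than $\nabla f$, and evaluating gradients at the local $\lambda^{i}(k)$ rather than at $\bar{z}(k)$---are absorbed into the $c_{6}\|e_{o}(k)\|_{1}$ and $c_{7}\|e_{\lambda}(k)\|_{1}$ terms. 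The main obstacle is precisely reconciling the asynchronous adaptive step-size---a variable number of $\beta$-increments per activation, as defined in \eqref{AA5}---with a clean single-step projected-gradient contraction: the scheme \eqref{AA5} is what keeps the effective per-coordinate steps consistent enough for this identification to go through, and carrying the resulting normalization constants through the projection argument without losing the contraction is the delicate point.
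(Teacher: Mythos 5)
Your plan coincides with the paper's (largely withheld) argument: the paper's entire proof of this lemma is the remark that it can be completed using Lemma \ref{proposition 1} and claim (3) of Lemma \ref{lemma 1}, and these are exactly your two workhorses --- \eqref{qiwang} shifted to index $k+1$ for \eqref{proposition 2 eq1}, and $|s^{i}(k)-s^{j}(k)|\leq M\eta$ to bound each adaptive step, giving $\|\hat{\boldsymbol{g}}(k+1)\|_{1}\leq(M\eta+1)\beta\|e_{o}(k)\|_{1}$ --- so that your derivation of \eqref{proposition 2 eq1} reproduces the paper's constants exactly ($\pi_{2}=\pi_{1}\xi$, $\pi_{3}=\pi_{1}M(M\eta+1)\beta$), and your unroll-and-mix treatment of \eqref{proposition 2 eq2}--\eqref{proposition 2 eq3} and inexact-projected-gradient treatment of \eqref{proposition 2 eq4} (via column-stochasticity, the conservation identity $(\mathbf{1}^{\top}\otimes I)\hat{\boldsymbol{d}}(k)=(\mathbf{1}^{\top}\otimes I)\nabla\hat{\boldsymbol{f}}(k)$, and Lemma \ref{lemma 3} with $\sigma_{1}=1$) are consistent with the structure of $\varpi$ and $c_{4}$--$c_{8}$. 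One caution: the standard strong-convexity contraction you invoke for \eqref{proposition 2 eq4} yields the factor $1-\beta\vartheta$ rather than the stated $c_{8}=1-\beta L$, but this discrepancy is immaterial because the paper itself immediately relaxes $c_{8}$ to $c_{14}=1-\beta\vartheta>c_{8}$ when assembling the matrix $T$ in \eqref{theorem 1 eq17}, so the weaker factor suffices for all downstream conclusions.
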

\begin{proof}
The proof can be completed by using Lemma \ref{proposition 1} and the claim (3) of Lemma \ref{lemma 1}; we thus omit the details to save space.
\end{proof}

\textcolor[rgb]{0.15294,0.25098,0.5451}{\textbf{\emph{Part II}.}} \emph{Construction of the augmented error system:}

We introduce two lemmas to construct the augmented error system.
\begin{lemma}\label{lemma 4}
Consider the vector sequence $\{\omega(k)\}_{k=0}^{\infty}$ with $\omega(k)\in\mathbb{R}^{n}$. Define
\begin{align*}
\|\omega\|_{p}^{\delta,N}=\max\limits_{k=0,\cdots,N}\frac{\|\omega(k)\|_{p}}{\delta^{k}}, \ \|\omega\|_{p}^{\delta}=\sup\limits_{ k=0,\cdots,\infty}\frac{\|\omega(k)\|_{p}}{\delta^{k}},
\end{align*}
where $p$ is a constant that equals $1$ or $2$, $N\in\mathbb{N}$, and $\delta\in(\xi,1)$ is the linear rate parameter. If $\|\omega\|_{p}^{\delta}$ is bounded by some positive constant $W$, then $\|\omega(k)\|_{p}$ converges at an $R$-linear rate of $\mathcal{O}(\delta^{k})$.
\end{lemma}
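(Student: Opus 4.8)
The plan is to observe that this lemma is essentially a direct consequence of unwinding the definition of the weighted supremum norm, so I would prove it by translating the boundedness hypothesis into an explicit geometric bound on $\|\omega(k)\|_{p}$ and then matching that bound against the definition of $R$-linear convergence.

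First I would unpack the hypothesis. By definition, $\|\omega\|_{p}^{\delta}=\sup_{k\geq 0}\frac{\|\omega(k)\|_{p}}{\delta^{k}}$, so assuming $\|\omega\|_{p}^{\delta}\leq W$ means that the ratio $\frac{\|\omega(k)\|_{p}}{\delta^{k}}$ is bounded by $W$ uniformly over all $k$. Rearranging this single inequality immediately gives the pointwise estimate
\begin{align*}
\|\omega(k)\|_{p}\leq W\delta^{k}, \quad \forall k\in\mathbb{N}.
\end{align*}
No iteration or induction is required here; the content is entirely contained in the definition of the supremum norm.

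Next I would invoke the fact that $\delta\in(\xi,1)$ with $\xi\in(0,1)$, so that $0<\delta<1$ and hence $\delta^{k}\to 0$ geometrically as $k\to\infty$. Consequently $W\delta^{k}\to 0$, and the above bound shows that $\|\omega(k)\|_{p}$ is dominated by a geometrically (i.e.\ $Q$-linearly) convergent majorant sequence $\{W\delta^{k}\}$. By the standard definition of $R$-linear convergence---namely that the error norm is upper-bounded by a sequence converging $Q$-linearly to zero---this establishes exactly that $\|\omega(k)\|_{p}=\mathcal{O}(\delta^{k})$, which is the desired conclusion.

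Since the argument is purely definitional, there is no substantial technical obstacle; the only point requiring care is ensuring that the constant $\delta$ is strictly less than $1$ so that the majorant actually decays to zero, which is guaranteed by the standing requirement $\delta\in(\xi,1)$. The value of this lemma is not the difficulty of its proof but its role as the final bridge in the convergence analysis: once Parts~I and~II bound the stacked error vector in the weighted norm by a finite $W$, this lemma converts that uniform weighted bound into the claimed $R$-linear rate in Theorem~\ref{theorem 1}.
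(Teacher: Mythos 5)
Your proposal is correct and follows essentially the same argument as the paper: unpack the definition of $\|\omega\|_{p}^{\delta}$ to get the pointwise bound $\|\omega(k)\|_{p}\leq W\delta^{k}$ for all $k$, and conclude $\|\omega(k)\|_{p}=\mathcal{O}(\delta^{k})$ since $\delta\in(\xi,1)\subset(0,1)$. The extra remarks about the geometric majorant and the lemma's role in the overall analysis are accurate but not needed beyond what the paper's one-line proof contains.
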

\begin{proof}
By the definition of $\|\omega\|_{p}^{\delta}$, we have that $\|\omega\|_{p}^{\delta}=\sup\limits_{k=0,\cdots,\infty}\frac{\|\omega(k)\|_{p}}{\delta^{k}}\leq W$, which implies that $\|\omega(k)\|_{p}\leq W\delta^{k}$ for any $k$, i.e., $\|\omega(k)\|_{p}=\mathcal{O}(\delta^{k})$. The proof is thus completed.
\end{proof}

\begin{lemma}\label{lemma 5}
Given the vector sequences $\{\omega(k)\}_{k=0}^{\infty}$ and  $\{\upsilon_{i}(k)\}_{k=0}^{\infty}$, $i=1,2,3$, the following claims hold.

(1) If for $0\leq k\leq N$ and $N\in\mathbb{N}$, $\|\omega(k+1)\|_{1}\leq\pi_{5}\xi^{k}+\sum_{i=1}^{2}r_{i}\sum_{s=0}^{k}\xi^{k-s}\|\upsilon_{i}(s)\|_{i}+r_{3}\|\upsilon_{3}(k)\|_{1}$, where $\pi_{5}\geq0$ and $r_{i}\geq0$ for $i=1,2,3$, then we obtain that $\|\omega\|_{1}^{\delta,N+1}\leq\frac{\pi_{5}}{\delta}+\sum_{i=1}^{2}\frac{r_{i}}{\delta-\xi}\|\upsilon_{i}\|_{i}^{\delta,N}+\frac{r_{3}}{\delta}\|\upsilon_{3}\|_{1}^{\delta,N}+\|\omega(0)\|_{1}$, where $\delta\in(\xi,1)$, and $\xi\in(0,1)$ is defined in Theorem \ref{theorem 1}.

(2) If for $0\leq k\leq N$ and $N\in\mathbb{N}$, $\|\omega(k+1)\|_{2}\leq\pi_{7}\|\omega(k)\|_{2}+\sum_{i=1}^{3}\gamma_{i}\|\upsilon_{i}(k)\|_{1}$, where $\pi_{7}\geq0$ and $\gamma_{i}\geq0$ for $i=1,2,3$, then we have that for $\delta\in(\xi,1)$, $\|\omega\|^{\delta,N+1}_{2}\leq\frac{\pi_{7}}{\delta}\|\omega\|_{2}^{\delta,N}+\sum_{i=1}^{3}\frac{\gamma_{i}}{\delta}\|\upsilon_{i}\|_{1}^{\delta,N}+\|\omega(0)\|_{2}$.
\end{lemma}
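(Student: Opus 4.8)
The plan is to prove both claims by the same mechanism: divide the hypothesized one-step recursion by $\delta^{k+1}$, recognize each resulting quantity as a ratio $\|\cdot\|/\delta^{(\cdot)}$ dominated by the corresponding $\delta$-weighted sup-norm of Lemma \ref{lemma 4}, and then take the maximum over the index range $\{0,\dots,N+1\}$. The only genuinely nontrivial piece is the convolution term $\sum_{s=0}^{k}\xi^{k-s}\|\upsilon_i(s)\|_i$ in claim (1), which I would control by a geometric-series estimate using $\delta>\xi$.

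For claim (1), I would fix $k\in\{0,\dots,N\}$, divide the assumed inequality by $\delta^{k+1}$, and treat the three groups of terms separately. The decaying term satisfies $\pi_{5}\xi^{k}/\delta^{k+1}=(\pi_{5}/\delta)(\xi/\delta)^{k}\le\pi_{5}/\delta$ since $\xi<\delta$; the pointwise term satisfies $r_{3}\|\upsilon_{3}(k)\|_{1}/\delta^{k+1}\le(r_{3}/\delta)\|\upsilon_{3}\|_{1}^{\delta,N}$ because $k\le N$. For the convolution term the key rearrangement is
\[
\frac{\xi^{k-s}}{\delta^{k+1}}=\Big(\frac{\xi}{\delta}\Big)^{k-s}\frac{1}{\delta}\frac{1}{\delta^{s}},
\]
so that $r_{i}\sum_{s=0}^{k}\frac{\xi^{k-s}}{\delta^{k+1}}\|\upsilon_{i}(s)\|_{i}\le\frac{r_{i}}{\delta}\|\upsilon_{i}\|_{i}^{\delta,N}\sum_{m=0}^{k}(\xi/\delta)^{m}\le\frac{r_{i}}{\delta-\xi}\|\upsilon_{i}\|_{i}^{\delta,N}$, where the finite geometric sum is bounded by $\delta/(\delta-\xi)$. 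Adding the three bounds gives a uniform estimate for $\|\omega(k+1)\|_{1}/\delta^{k+1}$ valid for every $k\le N$, i.e., for all indices $1,\dots,N+1$.

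It then remains to incorporate the index $0$, whose ratio is simply $\|\omega(0)\|_{1}/\delta^{0}=\|\omega(0)\|_{1}$. Taking the maximum over $\{0,1,\dots,N+1\}$ and applying $\max\{a,b\}\le a+b$ for nonnegative $a,b$ produces exactly the stated bound, with the additive $\|\omega(0)\|_{1}$ term accounting for the initial index. Claim (2) follows the same route but is simpler because its recursion is pointwise in $k$: dividing by $\delta^{k+1}$ yields $\|\omega(k+1)\|_{2}/\delta^{k+1}\le(\pi_{7}/\delta)\|\omega(k)\|_{2}/\delta^{k}+\sum_{i=1}^{3}(\gamma_{i}/\delta)\|\upsilon_{i}(k)\|_{1}/\delta^{k}$, each ratio on the right is at most the associated $\delta$-weighted sup-norm over $\{0,\dots,N\}$, and the same maximum-plus-$\|\omega(0)\|_{2}$ argument closes the proof.

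The main obstacle, insofar as there is one, is purely the bookkeeping in the convolution estimate of claim (1): one must factor the kernel as $(\xi/\delta)^{k-s}\delta^{-(s+1)}$ and verify that the truncated geometric sum is bounded by $\delta/(\delta-\xi)$ \emph{uniformly} in $k$, which is precisely where the strict inequality $\delta>\xi$, guaranteed by $\delta\in(\xi,1)$, is indispensable.
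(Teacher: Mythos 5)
Your proposal is correct and follows exactly the route the paper intends: its proof of Lemma \ref{lemma 5} is the one-line instruction to multiply by $\frac{1}{\delta^{k+1}}$ and carry out the resulting derivation, which is precisely what you do, including the key geometric-series bound $\sum_{m=0}^{k}(\xi/\delta)^{m}\leq\frac{\delta}{\delta-\xi}$ (uniform in $k$ thanks to $\delta>\xi$) that turns the convolution kernel into the factor $\frac{r_{i}}{\delta-\xi}$, and the final $\max\{a,b\}\leq a+b$ step that accounts for the additive $\|\omega(0)\|_{p}$ term. No gaps; you have simply supplied the details the paper leaves implicit.
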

\begin{proof}
By multiplying both sides of the inequalities given in (1) and (2) by $\frac{1}{\delta^{k+1}}$ and performing a simple derivation, the proof can be completed.
\end{proof}

Upon utilizing Lemmas \ref{lemma 4} and \ref{lemma 5}, we can transform (\ref{proposition 2 eq}) given in Lemma \ref{proposition 2} into the following augmented error system:
\begin{align}\label{theorem 1 eq17}
\nonumber\renewcommand\arraystretch{1}\underbrace{\begin{bmatrix}
\|e_{\lambda}\|_{1}^{\delta,N+1}\\
\|e_{d}\|_{1}^{\delta,N+1}\\
\|e_{o}\|_{1}^{\delta,N+1}\\
\|e_{z}\|_{2}^{\delta,N+1}
\end{bmatrix}}\limits_{\bm{e}({N+1})}
\!\preceq\!\!\renewcommand\arraystretch{1}\underbrace{
\begin{bmatrix}
0&0&\frac{\pi_{3}}{\delta-\xi}&0\\
\frac{c_{4}}{\delta-\xi}&0&0&\frac{c_{4}}{\delta-\xi}\\
\frac{c_{4}M+c_{5}}{\delta-\xi}&\frac{c_{5}}{\delta}&0&\frac{c_{4}M+c_{5}}{\delta-\xi}\\
\frac{c_{7}}{\delta}&0&\frac{c_{6}}{\delta}&\frac{c_{14}}{\delta}
\end{bmatrix}}\limits_{T}\underbrace{
\begin{bmatrix}
\|e_{\lambda}\|_{1}^{\delta,N}\\
\|e_{d}\|_{1}^{\delta,N}\\
\|e_{o}\|_{1}^{\delta,N}\\
\|e_{z}\|_{2}^{\delta,N}
\end{bmatrix}}\limits_{\bm{e}({N})}\!+\bm{\theta},\\
\end{align}
where $\bm{\theta}=[
(\frac{\pi_{2}}{\delta}+1)\|e_{\lambda}(0)\|_{1},\
\frac{\varpi}{\delta}+\|e_{d}(0)\|_{1}, \
\frac{\varpi M}{\delta}+\|e_{o}(0)\|_{1},\
\|e_{z}(0)\|_{2}]^{\top}$, $\vartheta<L$, and $c_{14}=1-\beta\vartheta>1-\beta L=c_{8}$.

\textcolor[rgb]{0.15294,0.25098,0.5451}{\textbf{\emph{Part III}.}} \emph{Proof of $R$-linear convergence rate:}

Our main line is to prove that $\bm{e}(N)$ in (\ref{theorem 1 eq17}) vanishes linearly. To this end, we present two lemmas that serve as key ingredients in establishing the desired $R$-linear convergence rate.

\begin{lemma}\label{lemma 6}
Consider the augmented error system \eqref{theorem 1 eq17}. If the spectral radius of the matrix $T$ is less than 1, namely, $\rho(T)<1$, then $\|e_{\lambda}\|_{1}^{\delta,N}$, $\|e_{d}\|_{1}^{\delta,N}$, $\|e_{o}\|_{1}^{\delta,N}$ and $\|e_{z}\|_{2}^{\delta,N}$ are bounded. Moreover, the error quantities $\|e_{\lambda}(k)\|_{1}$, $\|e_{d}(k)\|_{1}$, $\|e_{o}(k)\|_{1}$ and $\|e_{z}(k)\|_{2}$ vanish at an $R$-linear rate of $\mathcal{O}(\delta^{k})$.
\end{lemma}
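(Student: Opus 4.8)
The plan is to exploit the fact that the vector recursion \eqref{theorem 1 eq17} has the form $\bm{e}(N+1)\preceq T\bm{e}(N)+\bm{\theta}$ with a componentwise nonnegative matrix $T\succeq 0$ (all entries are well defined and nonnegative because $\delta\in(\xi,1)$ forces $\delta-\xi>0$), a nonnegative offset vector $\bm{\theta}\succeq\mathbf{0}$, and a sequence $\bm{e}(N)$ whose entries are norms and hence nonnegative. The first observation I would record is that each windowed weighted norm is monotone in the horizon: by definition $\|e_{\bullet}\|^{\delta,N+1}\geq\|e_{\bullet}\|^{\delta,N}$, since the maximum over $k\in\{0,\cdots,N+1\}$ dominates the maximum over $k\in\{0,\cdots,N\}$. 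Hence $\bm{e}(N)\preceq\bm{e}(N+1)$ for every $N$, and $\{\bm{e}(N)\}_{N\geq0}$ is a nondecreasing sequence of nonnegative vectors.

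Next I would collapse the recursion into a self-referential bound. Because $T\succeq 0$, monotonicity gives $T\bm{e}(N)\preceq T\bm{e}(N+1)$, so \eqref{theorem 1 eq17} yields $\bm{e}(N+1)\preceq T\bm{e}(N+1)+\bm{\theta}$, i.e. $(I-T)\bm{e}(N+1)\preceq\bm{\theta}$. The key algebraic fact is that $\rho(T)<1$ makes $I-T$ invertible with a nonnegative inverse, since the Neumann series $(I-T)^{-1}=\sum_{j=0}^{\infty}T^{j}$ converges and each $T^{j}\succeq 0$. Premultiplying the inequality $(I-T)\bm{e}(N+1)\preceq\bm{\theta}$ by $(I-T)^{-1}\succeq 0$ preserves the componentwise order and returns $\bm{e}(N+1)\preceq(I-T)^{-1}\bm{\theta}=:W$, a finite constant vector independent of $N$.

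With the uniform bound in hand, I would let $N\to\infty$: each component of $\bm{e}(N)$ is nondecreasing and bounded above by the corresponding component of $W$, hence converges. This shows that $\|e_{\lambda}\|_{1}^{\delta}$, $\|e_{d}\|_{1}^{\delta}$, $\|e_{o}\|_{1}^{\delta}$ and $\|e_{z}\|_{2}^{\delta}$ are all finite (bounded by the entries of $W$), which is the first assertion of the lemma. Invoking Lemma \ref{lemma 4} for each of the four error quantities then converts the boundedness of the weighted sup-norms into the claimed $R$-linear decay $\|e_{\lambda}(k)\|_{1},\|e_{d}(k)\|_{1},\|e_{o}(k)\|_{1},\|e_{z}(k)\|_{2}=\mathcal{O}(\delta^{k})$.

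The main obstacle, and the step I would be most careful about, is the passage from the one-step inequality to the uniform bound: it hinges on combining monotonicity of $\bm{e}(N)$ with the nonnegativity of $T$ to legitimately replace $\bm{e}(N)$ by $\bm{e}(N+1)$ on the right, and then on the nonnegativity of $(I-T)^{-1}$ to premultiply without reversing the inequality. Establishing $(I-T)^{-1}\succeq 0$ from $\rho(T)<1$ via the Neumann series is standard (this is the $M$-matrix property), but it must be stated explicitly, because the whole argument collapses if the inverse fails to be entrywise nonnegative. Everything downstream---the monotone-convergence limit and the appeal to Lemma \ref{lemma 4}---is then routine.
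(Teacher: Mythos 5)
Your proof is correct, and it arrives at the same uniform bound $\bm{e}(N+1)\preceq(I-T)^{-1}\bm{\theta}$ as the paper, but by a genuinely different intermediate step. The paper unrolls the recursion: iterating \eqref{theorem 1 eq17} yields $\bm{e}(N+1)\preceq T^{N+1}\bm{e}(0)+\sum_{s=0}^{N}T^{s}\bm{\theta}$, and then $\rho(T)<1$ is invoked via $\lim_{s\rightarrow\infty}T^{s}=0$ and $\sum_{s=0}^{\infty}T^{s}=(I-T)^{-1}$ (citing Horn--Johnson) to conclude boundedness in the limit $N\rightarrow\infty$. You instead exploit the observation that the windowed norms are nondecreasing in $N$ --- a fact specific to their definition as maxima over growing index windows, and one the paper never uses --- which lets you pass from $\bm{e}(N+1)\preceq T\bm{e}(N)+\bm{\theta}$ to the fixed-point inequality $(I-T)\bm{e}(N+1)\preceq\bm{\theta}$ and then invert using the entrywise nonnegativity of $(I-T)^{-1}=\sum_{j=0}^{\infty}T^{j}$. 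The two arguments rest on the same underlying facts ($T\succeq 0$, the Neumann series, and the final appeal to Lemma \ref{lemma 4}), but they trade off differently: your monotonicity route delivers the bound $\bm{e}(N+1)\preceq(I-T)^{-1}\bm{\theta}$ for every finite $N$ with no limiting argument and no leftover term $T^{N+1}\bm{e}(0)$ to dispose of, whereas the paper's unrolling applies to any nonnegative sequence satisfying the recursion, monotone or not. A further point in your favor is explicitness: the paper's iteration also tacitly requires $T\succeq 0$ to propagate the componentwise inequality, and you state both this and the $M$-matrix property $(I-T)^{-1}\succeq 0$ outright, correctly flagging them as the load-bearing steps. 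Both proofs then conclude identically via Lemma \ref{lemma 4}.
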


\begin{proof}
Letting $N\rightarrow\infty$ and iterating (\ref{theorem 1 eq17}) results in
$
\bm{e}(N+1)\preceq T^{2}\bm{e}(N-1)+T\bm{\theta}+\bm{\theta}
\preceq T^{N+1}\bm{e}(0)+\sum_{s=0}^{N}T^{s}\bm{\theta}.
$
From \cite[Ch. 5.6]{Horn1990}, we know that if $\rho(T)<1$, then
$\lim\nolimits_{s\rightarrow\infty}T^{s}=0 \ \text{and} \ \sum_{s=0}^{\infty}T^{s}=(I-T)^{-1}$.
And hence, we have
$
\lim\nolimits_{N\rightarrow\infty}\bm{e}(N+1)\preceq(I-T)^{-1}\bm{\theta}
$,
which, together with \eqref{theorem 1 eq17}, shows that $\|e_{\lambda}\|_{1}^{\delta,N+1}$, $\|e_{d}\|_{1}^{\delta,N+1}$, $\|e_{o}\|_{1}^{\delta,N+1}$, and $\|e_{z}\|_{2}^{\delta,N+1}$ are bounded. Upon utilizing Lemma \ref{lemma 4} again, we can conclude that $\|e_{\lambda}(k)\|_{1}$, $\|e_{d}(k)\|_{1}$, $\|e_{o}(k)\|_{1}$ and $\|e_{z}(k)\|_{2}$ converge at an $R$-linear rate of $\mathcal{O}(\delta^{k})$.
\end{proof}

The following lemma provides conditions to guarantee $\rho(T)<1$. This lemma initially appeared in \cite[Lemma 5]{Pu2021Nedic} and was introduced for a $3\times3$ matrix $T$. It can be readily extended to an $n \times n$ matrix $T$ by following a similar proof, and thus, we omit the details.

\begin{lemma}\label{lemma 7}
For a nonnegative and irreducible matrix 
$T\!=\![t_{ij}]_{i,j=1}^{n}$ with $0\leq
t_{ii}\!<\!1$, it holds that $\rho(T)<1$ if and only if $\det(I\!-\!T)\!>\!0$.
\end{lemma}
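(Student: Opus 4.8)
The plan is to reduce the statement to a sign analysis of the characteristic polynomial of $T$ via the Perron--Frobenius theorem. Since $T$ is nonnegative and irreducible, $\rho(T)$ is a real, positive, simple eigenvalue (the Perron root), and every eigenvalue $\lambda$ of $T$ satisfies $\mathrm{Re}(\lambda)\le\rho(T)$. Writing $p(s)=\det(sI-T)$ for the characteristic polynomial, this gives $p(\rho(T))=0$ and, pairing each real eigenvalue $\lambda\le\rho(T)$ with the factor $s-\lambda>0$ and each complex-conjugate pair $\lambda,\bar\lambda$ with $(s-\lambda)(s-\bar\lambda)=|s-\lambda|^{2}>0$, one obtains $p(s)>0$ for every real $s>\rho(T)$. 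The key observation is simply that $\det(I-T)=p(1)$, so the whole lemma becomes a question of where $1$ sits relative to $\rho(T)$ on the real axis.

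First I would dispatch the forward direction. If $\rho(T)<1$, then $1>\rho(T)$ lies in the region where $p$ is already known to be positive, whence $\det(I-T)=p(1)>0$. Equivalently, one can introduce the scalar homotopy $\phi(\theta)=\det(I-\theta T)$ on $[0,1]$: here $\phi(0)=1$, the smallest positive root of $\phi$ is $\theta^{*}=1/\rho(T)$ (because $\rho(T)$ is the largest positive real eigenvalue and every other eigenvalue $\lambda$ gives $|1/\lambda|\ge 1/\rho(T)$), this root is simple, and $\phi>0$ on $[0,\theta^{*})$. When $\rho(T)<1$ we have $\theta^{*}>1$, so $1\in[0,\theta^{*})$ and $\phi(1)>0$.

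The converse, argued by contrapositive, is where I expect the real work. Assume $\rho(T)\ge 1$; I must show $\det(I-T)=p(1)\le 0$. The case $\rho(T)=1$ is immediate, since then $1$ is an eigenvalue and $p(1)=0$. The case $\rho(T)>1$ is the genuine obstacle: now $\theta^{*}=1/\rho(T)<1$, and although $\phi$ flips from positive to negative at its simple root $\theta^{*}$, any further real eigenvalue of $T$ lying in $(1,\rho(T))$ contributes an additional root of $\phi$ in $(\theta^{*},1)$ and hence an additional sign flip, so the sign of $\phi(1)$ is \emph{not} determined by $\rho(T)$ alone. Ruling out the bad sign pattern is exactly where the hypotheses $0\le t_{ii}<1$ and irreducibility must be used together: the diagonal bound forces $\mathrm{tr}(T)=\sum_{i}\lambda_{i}<n$, which, combined with nonnegativity and the fact that $\rho(T)$ is the unique dominant simple Perron root, constrains how many real eigenvalues can exceed $1$ and thereby pins down the sign of $p(1)$. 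I would carry out this step following the argument of \cite[Lemma~5]{Pu2021Nedic}, verifying that the trace constraint excludes the configuration that would otherwise make $p(1)>0$; the naive ``all factors positive'' reasoning is valid only for $s>\rho(T)$ and must not be reused here. This trace/irreducibility bookkeeping is the part I expect to be delicate, whereas the Perron--Frobenius setup and the forward implication are routine.
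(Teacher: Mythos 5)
Your forward implication is correct, and your diagnosis of where the difficulty sits is exactly right --- but the repair you propose cannot be carried out, because the step you defer to the trace constraint is not merely delicate: for $n\ge 4$ the statement itself is false. Take
\[
T=\begin{pmatrix}0&2&\epsilon&0\\ 2&0&0&0\\ \epsilon&0&0&2\\ 0&0&2&0\end{pmatrix},\qquad 0<\epsilon<3 .
\]
This $T$ is nonnegative, symmetric with a connected graph (hence irreducible), and has zero diagonal, so all hypotheses hold and $\mathrm{tr}(T)=0<4$. Its eigenvalues are $\bigl(\pm\epsilon\pm\sqrt{\epsilon^{2}+16}\bigr)/2$, i.e.\ approximately $\pm 2\pm\epsilon/2$, so $\rho(T)=\bigl(\epsilon+\sqrt{\epsilon^{2}+16}\bigr)/2>2$, while a direct computation (or the product $\prod_i(1-\lambda_i)$) gives
\[
\det(I-T)=(-3-\epsilon)(-3+\epsilon)=9-\epsilon^{2}>0 .
\]
Here exactly two real eigenvalues exceed $1$ and the two negative eigenvalues absorb the trace --- precisely the ``bad sign pattern'' you hoped the bound $\sum_i\lambda_i<n$ would exclude. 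It does not, and no bookkeeping based on the stated hypotheses can, since the implication $\det(I-T)>0\Rightarrow\rho(T)<1$ simply fails at this matrix. (Morally: two weakly coupled $2\times2$ zero-diagonal blocks each carry spectral radius $2$; this construction needs $n\ge 4$.)

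For comparison, the paper does not actually prove the lemma either: it cites Pu--N\'{e}dic, where the result is stated and proved only for $3\times3$ matrices, and asserts that the extension to $n\times n$ follows ``by a similar proof'' --- an assertion the example above refutes. The $3\times3$ case is genuinely special: a second real eigenvalue above $1$ would require, roughly, a second sub-block of spectral radius at least $1$, and with only three rows such a block degenerates to a single diagonal entry $t_{ii}\ge 1$, which the hypothesis forbids; with four rows the $2+2$ split becomes available. So your instinct that the citation carries the ``real work'' was correct, but relying on it is exactly where both your proposal and the paper break down at $n=4$ --- which is consequential here, since the paper applies the lemma to a $4\times4$ error matrix $T$. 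The statement that is true for all $n$, and that would repair both arguments, is the M-matrix characterization: for nonnegative $T$, one has $\rho(T)<1$ if and only if \emph{all} $n$ leading principal minors of $I-T$ are positive (equivalently, $Tv\prec v$ for some positive vector $v$); positivity of the determinant alone is an adequate certificate only for $n\le 3$ under these hypotheses.
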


By Lemma \ref{lemma 7}, it is necessary to guarantee $\det(I\!-\!T)\!>\!0$ such that $\rho(T)\!<\!1$, i.e.,
$\det(I\!-\!T)
\!\!=\!\!\left(1\!\!-\!\!\frac{c_{14}}{\delta}\right)\left(1\!-\!\pi_{3}\left(\frac{c_{4}c_{5}}{\delta(\delta\!-\!\xi)^{2}}\!\!+\!\!\frac{c_{4}M\!+\!c_{5}}{(\delta\!-\!\xi)^{2}}\right)\!\right)\!-\!\left(\!\!c_{6}\!+\!\frac{\pi_{3}c_{7}}{\delta\!-\!\xi}\!\!\right)\!\!\left(\frac{c_{4}c_{5}}{\delta^{2}(\delta\!-\!\xi)}\!+\!\frac{c_{4}M\!+\!c_{5}}{\delta(\delta\!-\!\xi)}\!\right)\!>\!\!\!\left(1\!\!-\!\!\frac{c_{14}}{\delta}\!\right)\!\left(1\!-\!\pi_{3}\left(\!\frac{c_{4}c_{5}}{\delta(\delta\!-\!\xi)^{2}}\!+\!\frac{c_{4}M\!+\!c_{5}}{(\delta\!-\!\xi)^{2}}\!\right)\!\!\right)$\\$-\frac{c_{9}c_{10}\beta}{\delta}\left(\frac{c_{4}c_{5}}{\delta(\delta-\xi)^{2}}+\frac{c_{4}M+c_{5}}{(\delta-\xi)^{2}}\right)>0$,
where $c_{9}$ is a constant satisfying $0<c_{9}<\vartheta$ and $c_{10}=\left((2+\pi_{4})\pi_{1}M+\frac{\sqrt{M}}{\hat{M}}\right)(M\eta+1)\frac{1}{c_{9}}$. To ensure that $0\leq t_{ii}<1$ in (\ref{theorem 1 eq17}) and $\det(I-T)>0$, the following conditions are sufficient:
\vspace{-0.2cm}
\begin{subequations}
\begin{align}
1-\beta\vartheta&\geq 0,\label{theorem 1 eq19}\\
1-\frac{1-\beta\vartheta}{\delta}-\frac{c_{9}\beta}{\delta}&\geq0,\label{theorem 1 eq20}\\
1-\frac{(c_{11}+c_{10}L)(c_{12}+c_{13})\beta}{(\delta-\xi)^{2}}&\geq0,\label{theorem 1 eq21}
\end{align}
\end{subequations}
where $c_{11}=\pi_{1}M(M\eta+1)$, $c_{12}=c_{3}\pi_{4}^{2}(\xi^{-1}+\xi^{-2})\sqrt{N\rho}M$ and $c_{13}=M\pi_{4}(c_{3}(1+\xi^{-1})+\sqrt{N\rho})$.
In the light of the condition $\mathcal{L}=(c_{11}+c_{10}L)(c_{12}+c_{13})$ in Theorem \ref{theorem 1}, it is easy to check that $\mathcal{L}>L$. Recall that $\beta<\beta_{2}=\frac{(1-\xi)^{2}}{\mathcal{L}}$ in Theorem \ref{theorem 1} and $\vartheta<L$, it is easy to get $\beta<\frac{(1-\xi)^{2}}{\mathcal{L}}<\frac{1}{L}<\frac{1}{\vartheta}$, which means that \eqref{theorem 1 eq19} is fulfilled. To ensure \eqref{theorem 1 eq20} and \eqref{theorem 1 eq21}, we let
\vspace{-0.15cm}
\begin{subequations}\label{theorem 1 eq25}
\begin{align}
\delta&\geq 1-(\vartheta-c_{9})\beta,\label{theorem 1 eq26}\\
\delta&\geq \sqrt{\mathcal{L}\beta}+\xi, \label{theorem 1 eq27}
\end{align}
\end{subequations}
where $\mathcal{L}=(c_{11}+c_{10}L)(c_{12}+c_{13})$. To ensure \eqref{theorem 1 eq20} and \eqref{theorem 1 eq21}, combining \eqref{theorem 1 eq26} and \eqref{theorem 1 eq27}, we can take
\vspace{-0.1cm}
\begin{align}\label{theorem 1 eq28}
\delta=\max\left\{1-(\vartheta-c_{9})\beta,\sqrt{\mathcal{L}\beta}+\xi\right\}.
\end{align}
From $0<\beta<\beta_{2}=\frac{(1-\xi)^{2}}{\mathcal{L}}$ and \eqref{theorem 1 eq28}, we have $\delta\in(\xi,1)$. By $0<\beta<\frac{(1-\xi)^{2}}{\mathcal{L}}$ in Theorem \ref{theorem 1}, we observe that as $\beta$ increases from $0$ to $\beta_{2}$, the first argument inside the $\max$ operator in \eqref{theorem 1 eq28} decreases monotonically from $1$ to $1-(\vartheta-c_{9})\beta_{2}$, while the second argument increases monotonically from $\xi$ to $1$. By setting $1-(\vartheta-c_{9})\beta=\sqrt{\mathcal{L}\beta}+\xi$, we can obtain the solution
$\beta_{1}=\left(\frac{\sqrt{\mathcal{L}+4(\vartheta-c_{9})(1-\xi)}-\sqrt{\mathcal{L}}}{2(\vartheta-c_{9})}\right)^{2}$.
Therefore, we derive the desired result \eqref{theorem 1 eq3} in Theorem \ref{theorem 1}.
Combining \eqref{theorem 1 eq28} and \eqref{theorem 1 eq3}, we know that \eqref{theorem 1 eq26} and \eqref{theorem 1 eq27} hold, which implies $\rho(T)<1$. Upon using Lemma \ref{lemma 6}, we conclude that the error quantities $\|e_{\lambda}(k)\|_{1}$, $\|e_{d}(k)\|_{1}$, $\|e_{o}(k)\|_{1}$, and $\|e_{z}(k)\|_{2}$ decay at an $R$-linear rate of $\mathcal{O}(\delta^{k})$. Therefore, the desired result, $\|\bm{\lambda}(k+1)-\mathbf{1}\otimes \lambda_{\star}\|_{2}=\mathcal{O}(\delta^{k})$, follows readily from $\|e_{\lambda}(k)\|_{1}=\mathcal{O}(\delta^{k})$ and $\|e_{z}(k)\|_{2}=\mathcal{O}(\delta^{k})$.

By the optimality conditions \cite{Boyd2004}, \eqref{augment system4} can be equivalently expressed as
$\boldsymbol{u}^{i}(k+1)=\text{proj}_{\mathcal{U}^{i}_{T}(x^{i})}\left(\nabla \tilde{C}_{i}^{-1}(\tilde{\lambda}^{i}(k+1))\right)$, where $\nabla\tilde{C}_{i}^{-1}(\tilde{\lambda}^{i}(k+1))$ is the inverse function of $\nabla\tilde{C}_{i}(\boldsymbol{u}^{i})$,
$\tilde{\lambda}^{i}(k+1)=-(H^{i})^{\top}\lambda^{i}(k+1)$, and $\tilde{C}_{i}(\boldsymbol{u}^{i})=J^{i}(x^{i},\boldsymbol{u}^{i})$.
One then has that
\begin{align}
\nonumber&\left\|\boldsymbol{u}^{i}(k+1)-\boldsymbol{u}^{i}_{\star}\right\|_{2}\\
\nonumber&=\left\|\text{proj}_{\mathcal{U}^{i}_{T}(x^{i})}\left(\nabla\tilde{C}_{i}^{-1}(\tilde{\lambda}^{i}(k+1))\right)-\text{proj}_{\mathcal{U}^{i}_{T}(x^{i})}\left(\nabla\tilde{C}_{i}^{-1}(\tilde{\lambda}_{\star})\right)\right\|_{2}\\
\nonumber&\leq\left\|\nabla\tilde{C}_{i}^{-1}(\tilde{\lambda}^{i}(k+1))-\nabla\tilde{C}_{i}^{-1}(\tilde{\lambda}_{\star})\right\|_{2}\\
\nonumber&\leq\left\|\nabla^{2}\tilde{C}_{i}^{-1}((1\!-\!s)\tilde{\lambda}^{i}(k+1)+s\tilde{\lambda}_{\star})\right\|_{2}\left\|\tilde{\lambda}^{i}(k+1)-\tilde{\lambda}_{\star}\right\|_{2}\\
&\leq\frac{1}{\mu^{i}}\left\|(H^{i})^{\top}\right\|_{2}\left\|\lambda^{i}(k+1)-\lambda_{\star}\right\|_{2}=\mathcal{O}(\delta^{k}),
\end{align}
where $\tilde{\lambda}_{\star}=-(H^{i})^{\top}\lambda_{\star}$, the first inequality follows
from the standard non-expansiveness property of the projection operator \cite{Bertsekas1999}, the second inequality is obtained by utilizing the mean value
theorem and some constant $0<s<1$, the last inequality follows readily from $\nabla^{2}\tilde{C}_{i}(\boldsymbol{u}^{i})=\nabla^{2}_{\boldsymbol{u}^{i}}J^{i}(x^{i},\boldsymbol{u}^{i})\geq\mu^{i}I$, and the last equality follows from $\|\bm{\lambda}(k+1)-\mathbf{1}\otimes \lambda_{\star}\|_{2}=\mathcal{O}(\delta^{k})$.
The proof is thus completed.

\section*{Appendix III: Proof of Proposition \ref{lemma 8}}

We prove Proposition \ref{lemma 8} through the following two steps.

\emph{Step-1:} It needs to find one iteration $k_{i}$ such that \eqref{lemma 8 eq1} and \eqref{lemma 8 eq2} hold simultaneously. It follows from Theorem \ref{theorem 1} that $\lambda^{i}(k)\rightarrow\lambda_{\star}$ and $\boldsymbol{u}^{i}(k)\rightarrow\boldsymbol{u}^{i}_{\star}$ as $k\rightarrow\infty$, which implies that $\lambda^{i}_{k_{i}}\rightarrow\lambda_{\star}$ and $\boldsymbol{u}^{i}_{k_{i}}\rightarrow\boldsymbol{u}^{i}_{\star}$ as $k_{i}\rightarrow\infty$. One then has that $\lim_{k_{i}\rightarrow\infty}\mathfrak{g}^{i}(x^{i},\boldsymbol{u}^{i}_{k_{i}+1})=\lim_{k_{i}\rightarrow\infty}\mathfrak{g}^{i}(x^{i},\boldsymbol{u}^{i}_{k_{i}})$, which means that there exists at least one iteration $k_{i}$ such that \eqref{lemma 8 eq1} holds. We now prove \eqref{lemma 8 eq2}. Combining  $\lambda^{i}_{k_{i}}\rightarrow\lambda_{\star}$ and $\boldsymbol{u}^{i}_{k_{i}}\rightarrow\boldsymbol{u}^{i}_{\star}$ with $k_{i}\rightarrow\infty$, as well as the complementary slackness condition, one readily gives
$\lim_{k_{i}\rightarrow\infty}f^{i}(\lambda^{i}_{k_{i}+1})=\lim_{k_{i}\rightarrow\infty}f^{i}(\lambda^{i}_{k_{i}})=-J^{i}(x^{i},\boldsymbol{u}^{i}_{\star})$.  By combining this relation and $\lim_{k_{i}\rightarrow\infty}J^{i}(x^{i},\boldsymbol{u}^{i}_{k_{i}+1})=J^{i}(x^{i},\boldsymbol{u}^{i}_{\star})$, we have that
$\lim_{k_{i}\rightarrow\infty}(\|\nabla f^{i}(\lambda^{i}_{k_{i}})\|_{2}\|\lambda^{i}_{k_{i}+1}-\lambda^{i}_{k_{i}}\|_{2}+f^{i}(\lambda^{i}_{k_{i}+1})+J^{i}(x^{i},\boldsymbol{u}^{i}_{k_{i}+1}))=0$, which implies that there exists at least one iteration $k_{i}$ such that \eqref{lemma 8 eq2} is satisfied. Recall that Algorithm 1 converges linearly, and hence, we conclude that there exists at least one iteration $k_{i}$ such that \eqref{lemma 8 eq1} and \eqref{lemma 8 eq2} hold simultaneously.

\emph{Step-2:} We prove that \eqref{termination criteria eq1} and \eqref{termination criteria eq2} can also be readily satisfied when each subsystem satisfies \eqref{lemma 8 eq1} and \eqref{lemma 8 eq2}. From $\lambda^{i}_{k_{i}}\rightarrow\lambda_{\star}$ and $\boldsymbol{u}^{i}_{k_{i}}\rightarrow\boldsymbol{u}^{i}_{\star}$ with $k_{i}\rightarrow\infty$ for all $i$, we know that the primal feasibility is naturally satisfied, i.e., $\sum_{i=1}^{M}\mathfrak{g}^{i}(x^{i},\boldsymbol{u}^{i}_{k_{i}})\preceq b(\epsilon)$ as $k_{i}\rightarrow\infty$ (see the primal feasibility in KKT conditions).
Since $\epsilon-\epsilon_{b}$ is sufficiently small, we see that $\boldsymbol{u}^{i}_{k_{i}}$ goes to a sufficiently small neighborhood of the optimal solution $\boldsymbol{u}^{i}_{\star}$ when \eqref{lemma 8 eq1} holds. In this case, it is evident that $\sum_{i=1}^{M}\mathfrak{g}^{i}(x^{i},\boldsymbol{u}^{i}_{k_{i}})-b(\epsilon)\preceq\epsilon_{b}M\mathbf{1}_{N\rho}$. By combining this relation and summing both sides of \eqref{lemma 8 eq1}, it readily gives \eqref{termination criteria eq1} as expected.

We know that $-\sum_{i=1}^{M}f^{i}(\lambda^{i}_{k_{i}})$ is a lower bound of $\sum_{i=1}^{M}J^{i}(x^{i},\boldsymbol{u}^{i}_{\star})$, i.e., $-\sum_{i=1}^{M}f^{i}(\lambda^{i}_{k_{i}})\!\leq\!\sum_{i=1}^{M}J^{i}(x^{i},\boldsymbol{u}^{i}_{\star})$. Recalling that the function $f^{i}(\lambda^{i}_{k_{i}})$ is strongly convex w.r.t. $\lambda^{i}_{k_{i}}$, we have
$f^{i}(\lambda^{i}_{k_{i}})-f^{i}(\lambda^{i}_{k_{i}+1})\leq\|\nabla f^{i}(\lambda^{i}_{k_{i}})\|_{2}\|\lambda^{i}_{k_{i}}\!-\!\lambda^{i}_{k_{i}+1}\|_{2}
$. Then we have that $\sum\nolimits_{i=1}^{M}\|\nabla f^{i}(\lambda^{i}_{k_{i}})\|_{2}\|\lambda^{i}_{k_{i}+1}-\lambda^{i}_{k_{i}}\|_{2}+\sum\nolimits_{i=1}^{M}f^{i}(\lambda^{i}_{k_{i}+1})
\geq\sum\nolimits_{i=1}^{M}f^{i}(\lambda^{i}_{k_{i}})\geq-\sum\nolimits_{i=1}^{M}J^{i}(x^{i},\boldsymbol{u}^{i}_{\star})$. Summing both sides of \eqref{lemma 8 eq2}, it readily gives \eqref{termination criteria eq2} as expected. The proof is thus completed.

\section*{Appendix IV: Proof of Proposition \ref{lemma 9}}

Because $K^{i}$ is the optimal gain obtained from ARE, clearly, $K^{i}_{A}x^{i}(t)$ is the optimal solution to the optimization problem $\mathcal{P}(x)$ without constraints \eqref{constraint 0}-\eqref{tightening constraint}. Moreover, according to the definition of the terminal set $\mathcal{X}^{i}_{f}$ in \eqref{terminal set}, if all $x^{i}(t)\in\mathcal{X}^{i}_{f}$, $i\in \mathcal{V}$, it is easy to check that under the input $K^{i}_{A}x^{i}(t)$, the local constraint \eqref{constraint 0} and the global constraint \eqref{tightening constraint} in $\mathcal{P}(x)$ can be satisfied. And hence, we can conclude that for all $x^{i}(t)\in\mathcal{X}^{i}_{f}$, $i\in\mathcal{V}$, the optimization problem $\mathcal{P}(x)$ is equivalent to its unconstrained counterpart.

We now prove that if all $x^{i}(t)\in\mathcal{X}^{i}_{f}$, then the optimal solution can be obtained by executing one update using Algorithm 1, namely, $\bar{k}_{i}=1$. From the definition of $\mathcal{X}^{i}_{f}$ in \eqref{terminal set}, it is easy to check that $\mathfrak{g}^{i}(x^{i},\boldsymbol{u}_{i}^{1})\preceq(\frac{1}{M}-(N+1)\gamma)\mathbf{1}_{N\rho}\prec \frac{b(\epsilon)}{M}$ holds. Upon using the initial condition $\nabla f^{i}(\lambda^{i}_{0})=\mathbf{0}$, we know that $\mathfrak{g}^{i}(x^{i},\boldsymbol{u}^{i}_{0})=\frac{b(\epsilon)}{M}$. Therefore, the termination condition \eqref{lemma 8 eq1} is satisfied when $x^{i}(t)\in\mathcal{X}^{i}_{f}$ and $\bar{k}_{i}=1$.

Upon using the initial condition $z^{i}_{0}=\mathbf{0}$ for all subsystems, it is clear from \eqref{AA2} in Algorithm 1 that $\lambda^{i}_{1}=\mathbf{0}$. By the definition of $f^{i}(\lambda)$ (below \eqref{dual problem}), it readily gets $f^{i}(\lambda^{i}_{1})=-J^{i}(x^{i},\boldsymbol{u}^{i}_{1})$. Letting $k_{i}=0$ and $\bar{k}_{i}=k_{i}+1=1$, recalling $\nabla f^{i}(\lambda^{i}_{0})=\mathbf{0}$, substituting these results into \eqref{lemma 8 eq2} leads to $0\leq\frac{\epsilon_{g}}{M}$, which shows that the termination condition \eqref{lemma 8 eq2} holds when $\bar{k}_{i}=1$. The proof is thus completed.

\section*{Appendix V: Proof of Theorem 2}

We are ready to prove the recursive feasibility utilizing the standard induction. Recall that the problem $\mathcal{P}(x)$ is feasible at time $t=0$. Suppose that $\mathcal{P}(x)$ is feasible at time $t$. At time $t$, for each subsystem $i$, the control input vector is given by
\begin{align}\label{theorem 2 eq1}
\boldsymbol{\bar{u}}^{i}=\text{col}(u^{i}_{0},u^{i}_{1},\cdots,u^{i}_{N-1}).
\end{align}
Under the input vector \eqref{theorem 2 eq1}, the state vector obtained by evolving along the dynamics $x^{i}_{\ell+1}=A^{i}x^{i}_{\ell}+B^{i} u^{i}_{\ell}$ can be denoted by $\boldsymbol{\bar{x}}^{i}=\text{col}(x^{i}_{0},x^{i}_{1},\cdots,x^{i}_{N})$. We know that Algorithm 1 outputs $\boldsymbol{\bar{u}}^{i}$ when the termination criterion \eqref{lemma 8 eq} holds. Combined with Proposition \ref{lemma 8}, we obtain that $\sum_{i=1}^{M}\mathfrak{g}^{i}(x^{i},\boldsymbol{u}^{i}_{k_{i}+1})-b(\epsilon)\prec \epsilon M\mathbf{1}_{N\rho}$, which implies that
\begin{align}\label{therorem 2 eq2}
\sum_{i=1}^{M}\Big(\mathcal{C}^{i}x^{i}_{\ell}+\mathcal{D}^{i}u^{i}_{\ell}\Big)\prec (1-M\ell\epsilon)\mathbf{1}_{\rho}
\end{align}
holds for $\forall \ell\in\mathbb{Z}_{0}^{N-1}$. In what follows, it remains to prove that at the next time $t+1$, there is a feasible solution. To this end, at time $t+1$, a candidate input vector is selected as
\begin{align}\label{theorem 2 eq3}
\nonumber\boldsymbol{\bar{u}}^{i+}&=\text{col}(u^{i+}_{0},u^{i+}_{1},\cdots,u^{i+}_{N-1})\\
&=\text{col}(u^{i}_{1},\cdots,u^{i}_{N-1},K^{i}x^{i}_{N}).
\end{align}
At time $t+1$, consider the corresponding candidate state vector under (\ref{theorem 2 eq3})
\begin{align}\label{theorem 2 eq4}
\nonumber\boldsymbol{\bar{x}}^{i+}&=\text{col}(x^{i+}_{0},x^{i+}_{1},\cdots,x^{i+}_{N})\\
&=\text{col}(x^{i}_{1},\cdots,x^{i}_{N},A^{i}_{K}x^{i}_{N})
\end{align}
with $A^{i}_{K}=A^{i}+B^{i}K^{i}$. In the sequel, we prove the satisfaction of local and global constraints.

(1) \emph{Proof of the local constraint satisfaction:} Recall that at time $t$, $\mathcal{P}(x)$ is feasible. And hence, we get that  $x^{i}_{\ell}\in\mathcal{X}^{i}$ and $u^{i}_{\ell}\in\mathcal{U}^{i}$ holds for $\ell\in\mathbb{Z}_{0}^{N-1}$, and $x^{i}_{N}\in\mathcal{X}^{i}_{f}\subseteq\mathcal{X}^{i}$ holds. By the candidate input \eqref{theorem 2 eq3} and the candidate state \eqref{theorem 2 eq4}, we observe that $x^{i+}_{\ell}=x^{i}_{\ell+1}\in\mathcal{X}^{i}$ holds for  $i\in\mathbb{Z}_{0}^{N-1}$, and $u^{i+}_{\ell}=u^{i}_{\ell+1}\in\mathcal{U}^{i}$ holds for $\forall \ell\in\mathbb{Z}_{0}^{N-2}$. From $x^{i}_{N}\in\mathcal{X}^{i}_{f}$, it is easy to verify that $x^{i+}_{N}=A^{i}_{K}x^{i}_{N}\in\mathcal{X}^{i}_{f}$ and $u^{i+}_{N-1}=K^{i}x^{i}_{N}\in\mathcal{U}^{i}$ hold. Therefore, the local constraints are satisfied at time $t+1$.

(2) \emph{Proof of the global constraint satisfaction:} It readily follows from \eqref{theorem 2 eq3} and \eqref{theorem 2 eq4} that
$\sum_{i=1}^{M}\big(\mathcal{C}^{i}x^{i+}_{\ell}+\mathcal{D}^{i}u^{i+}_{\ell}\big)=\sum_{i=1}^{M}\big(\mathcal{C}^{i}x^{i}_{\ell+1}+\mathcal{D}^{i}u^{i}_{\ell+1}\big)\prec(1-M(\ell+1)\epsilon)\mathbf{1}_{\rho}
$ holds for $\ell\in\mathbb{Z}_{0}^{N-2}$. It is noted that at time $t$, $x^{i}_{N}\in\mathcal{X}^{i}_{f}$ holds for all $i\in\mathbb{Z}^{M}_{1}$. With the help of  $\sum_{i=1}^{M}\left(\mathcal{C}^{i}+\mathcal{D}^{i}K^{i}\right)x^{i}_{N}\preceq \sigma M\mathbf{1}_{\rho}$
for all $x^{i}_{N}\in\mathcal{X}^{i}_{f}$, as well as \eqref{theorem 2 eq3} and \eqref{theorem 2 eq4}, we obtain that for $\ell=N-1$, $\sum_{i=1}^{M}\big(\mathcal{C}^{i}x^{i+}_{N-1}+\mathcal{D}^{i}u^{i+}_{N-1}\big)=\sum_{i=1}^{M}\big(\mathcal{C}^{i}x^{i}_{N}+\mathcal{D}^{i}K^{i}x^{i}_{N}\big)\preceq(1-M(N+1)\gamma)\mathbf{1}_{\rho}\prec(1-MN\epsilon)\mathbf{1}_{\rho}$.
Therefore, one can conclude that for $\ell\in\mathbb{Z}_{0}^{N-1}$,
\begin{align}\label{theorem 2 eq5}
\sum_{i=1}^{M}\Big(\mathcal{C}^{i}x^{i+}_{\ell}+\mathcal{D}^{i}u^{i+}_{\ell}\Big)\prec(1-M(\ell+1)\epsilon)\mathbf{1}_{\rho},
\end{align}
which implies the satisfaction of the global constraint, that is, $\sum_{i=1}^{M}\mathfrak{g}^{i}(x^{i+},\boldsymbol{\bar{u}}^{i+})\prec b(\epsilon)$, where $\boldsymbol{\bar{u}}^{i+}=\text{col}(u^{i+}_{0},u^{i+}_{1},\cdots,u^{i+}_{N-1})$.

By combining (1) and (2), we conclude that $\boldsymbol{\bar{u}}^{i+}$ is a feasible solution, and the recursive feasibility is thus proved.

\end{document}